\newtheorem{theorem}{Theorem}[section]
\newtheorem{proposition}[theorem]{Proposition}
\newtheorem{corollary}[theorem]{Corollary}
\newtheorem{lemma}[theorem]{Lemma}
\theoremstyle{definition}
\newtheorem{definition}[theorem]{Definition}
\newtheorem{example}[theorem]{Example}
\theoremstyle{remark}
\newtheorem{remark}[theorem]{Remark}
\numberwithin{equation}{section}
\newcommand{\tr}{\mathrm{tr}\,}
\newcommand{\mr}[1]{\mathrm{#1}}
\newcommand{\mc}[1]{\mathcal{#1}}
\newcommand{\mbb}[1]{\mathbb{#1}}
\newcommand{\bs}[1]{\boldsymbol{#1}}
\newcommand{\cx}{\bar{x}}
\newcommand{\bze}{\bs{0}}
\newcommand{\bb}{\bs{s}}
\newcommand{\bx}{\bs{x}}
\newcommand{\by}{\bs{y}}
\newcommand{\bz}{\bs{z}}
\newcommand{\bu}{\bs{u}}
\newcommand{\bv}{\bs{v}}
\newcommand{\bw}{\bs{w}}
\newcommand{\bV}{\bs{V}}
\newcommand{\bH}{\bs{H}}
\newcommand{\VV}{\mbb{V}}
\newcommand{\Gr}[2]{\mr{Gr}({#1},{#2})}
\newcommand{\projj}[2]{\mr{proj}_{#2}({#1})}
\newcommand{\bE}{\bs{e}}
\newcommand{\bN}{\bs{n}}
\newcommand{\bM}{\bs{m}}
\newcommand{\bR}{\bs{R}}
\newcommand{\Rie}[4]{\langle\bR({#1},{#2}){#3},\,{#4}\rangle}
\newcommand{\Ric}[2]{\bs{\mc{R}ic}({#1},{#2})}
\newcommand{\SH}{\bs{\hat{S}_H}}
\newcommand{\oS}{\bs{\hat{S}}}
\newcommand{\oR}{\bs{\hat{\mc{R}}}}
\newcommand{\cR}{\mc{R}}
\newcommand{\II}{\bs{\mr{II}}}
\newcommand{\III}{\bs{\mr{III}}}
\newcommand{\metric}[2]{\langle\, {#1},\,{#2}\,\rangle}
\newcommand{\Cyll}{\mr{Cyl}_p}
\renewcommand{\tilde}{\widetilde}
\renewcommand{\hat}{\widehat}
\renewcommand{\bar}{\overline}
\newcommand{\dd}{\partial}
\newcommand{\ga}{\gamma}
\newcommand{\del}{\delta}
\newcommand{\al}{\alpha}
\newcommand{\bet}{\beta}
\newcommand{\vep}{\varepsilon}
\newcommand{\lbd}{\lambda}
\newcommand{\ka}{\kappa}
\newcommand{\diag}{\mathrm{diag}}
\def\RR{{\mathbb R}}
\def\SS{{\mathbb S}}
\def\cS{{\mathcal S}}
\def\cO{{\mathcal O}}
\def\cM{{\mathcal M}}
\def\cB{{\mathcal B}}
\newcommand{\Id}{\mathrm{Id}}
\begin{document}

\title{Integral Invariants from Covariance Analysis of Embedded Riemannian Manifolds}

\author[J. \'Alvarez-Vizoso]{Javier \'Alvarez-Vizoso}
\author[M. Kirby]{Michael Kirby}
\author[C. Peterson]{Chris Peterson}
\address{Department of Mathematics, Colorado State University, Fort Collins, CO, USA}
\email{alvarez@math.colostate.edu, kirby@math.colostate.edu, peterson@math.colostate.edu}

\date{\today}

\maketitle


\begin{abstract}
Principal Component Analysis can be performed over small domains of an embedded Riemannian manifold in order to relate the covariance analysis of the underlying point set with the local extrinsic and intrinsic curvature. We show that the volume of domains on a submanifold of general codimension, determined by the intersection with higher-dimensional cylinders and balls in the ambient space, have asymptotic expansions in terms of the mean and scalar curvatures. Moreover, we propose a generalization of the classical third fundamental form to general submanifolds and prove that the eigenvalue decomposition of the covariance matrices of the domains have asymptotic expansions with scale that contain the curvature information encoded by the traces of this tensor. In the case of hypersurfaces, this covariance analysis recovers the principal curvatures and principal directions, which can be used as descriptors at scale to build up estimators of the second fundamental form, and thus the Riemann tensor, of general submanifolds.
\end{abstract}


\DeclareRobustCommand{\SkipTocEntry}[5]{}

\makeatletter
\def\@tocline#1#2#3#4#5#6#7{\relax
\ifnum #1>\c@tocdepth 
  \else 
    \par \addpenalty\@secpenalty\addvspace{#2}%
\begingroup \hyphenpenalty\@M
    \@ifempty{#4}{%
      \@tempdima\csname r@tocindent\number#1\endcsname\relax
 }{%
   \@tempdima#4\relax
 }%
 \parindent\z@ \leftskip#3\relax \advance\leftskip\@tempdima\relax
 \rightskip\@pnumwidth plus4em \parfillskip-\@pnumwidth
 #5\leavevmode\hskip-\@tempdima #6\nobreak\relax
 \ifnum#1<0\hfill\else\dotfill\fi\hbox to\@pnumwidth{\@tocpagenum{#7}}\par
 \nobreak
 \endgroup
  \fi}
\makeatother

\tableofcontents


\section{Introduction}\label{sec:intro}

Local integral invariants based on Principal Component Analysis have been introduced in the literature as theoretical tools to perform Manifold Learning and Geometry Processing of low-dimensional submanifolds, like curves and surfaces in space. Curvature descriptors obtained in this way serve as feature and shape estimators at scale whose numerical implementation takes advantage of the benefits of employing integrals instead of differentials when only a discrete sample of points is available. Integral invariants provide a theoretical link between the statistical covariance analysis of the underlying point-set of a domain and the differential-geometric invariants at a point of the domain inside the manifold. In particular, intersecting the submanifold with a ball in the ambient space cuts out a subdomain whose covariance matrix has an eigenvalue decomposition that asymptotically expands with the scale of the ball. The geometric interpretation of this analysis lies in the fact that the first and second terms of the eigenvalue series encode the curvature information of the submanifold at the center of the ball. 

Integral invariants have been introduced and used in Computer Graphics and Geometry Processing by \cite{connolly1986},\cite{cazals2003a, cazals2003b}, \cite{clarenz2003, clarenz2004}, \cite{manay2004, manay2006}. The integral invariant viewpoint via Principal Component Analysis has been introduced and studied theoretically and numerically \cite{alliez2007}, \cite{berkmann1994}, \cite{clarenz2003, clarenz2004}, \cite{hoppe1992}, \cite{manay2004}, \cite{pottmann2006}, with a focus on curves and surface, in order to process discrete samples of points to determine features and detect shapes at scale, and study stability with respect to noise \cite{lai2009}, \cite{pottmann2007, pottmann2008}. Voronoi-based covariance matrices have been also been of interest, \cite{merigot2009, merigot20011}.
The eigenvalue decomposition of covariance matrices of spherical intersection domains was also introduced by \cite{broomhead1991local}, \cite{solis1993, solis2000}, in order to obtain local adaptive Galerkin bases for the invariant manifold of large-dimensional dynamical systems. For curves, the Frenet-Serret frame is recovered in the scale limit, and ratios of the covariance matrix eigenvalues provide descriptors at scale of the generalized curvatures \cite{alvarez2017}, but the tools needed to study the curve case are significantly different due to the fact that one-dimensional submanifolds have only extrinsic curvature.

In the present work we generalize to embedded Riemannian manifolds of general codimension the recent study of PCA integral invariants of hypersurfaces \cite{alvarez2018a}, that followed the theoretical study of surfaces in \cite{pottmann2007}, with the purpose of obtaining analogous asymptotic formulas between eigenvalues of covariance matrices and curvature, as it was found for curves in \cite{alvarez2017}. We shall also introduce a generalization to general codimension of the third fundamental form in order to encapsulate all the curvature information hidden in the covariance analysis. Our main result shows how the eigenvalue decomposition of the covariance of cylindrical and spherical intersection domains has the first two orders of the asymptotic expansion given in terms of the dimension, and the extrinsic and intrinsic curvature encoded in the traces of the third fundamental form, with limit eigenvectors playing the role of generalized principal directions.

Geodesic balls inside manifolds have asymptotic series for their intrinsic volume given as corrections to the Euclidean ball completely determined by intrinsic scalar curvature invariants \cite{gray1979}. In our case, the domains of integration depend on the embedding of the submanifold so the extrinsic curvature will play a crucial role in the volume corrections, as in \cite{hulin2003}. Normal coordinates via the exponential map are naturally used to do geometric measurements needed for probability and statistics from an intrinsic perspective inside Riemannian manifolds, e.g. \cite{pennec1999, pennec2006}. The generalized definition of integral invariants makes use of the exponential map in the ambient manifold to make measurements over the underlying point-set of a submanifold.

The structure of the paper is as follows: in section \S\ref{sec:IntInv} we propose a general definition of integral invariants in the context of general Riemannian submanifolds by use of the exponential map, along with the two types of kernel domains on which we will perform the PCA. In section \S\ref{sec:IIIform}, the study of the geometry of submanifolds via the second fundamental form is briefly reviewed and the classical third fundamental form is generalized to submanifolds of general codimension. In section \S\ref{sec:cylCov} we compute the volume, barycenter and covariance matrix of a cylindrical domain inside an embedded submanifold; in particular, we show that the scaling of the eigenvalues of the covariance matrix singles out the tangent and normal spaces of the manifold at the point by the span of the corresponding limit eigenvectors, and how the next-to-leading order term in the asymptotic series of the eigenvalues is determined by the eigenvalues of the tangent and normal traces of the third fundamental form. In section \S\ref{sec:SphCov} an analogous analysis is carried out for the domain determined by the intersection of a ball in ambient space  with the manifold, which introduces considerable correction terms with respect to the previous case. This leads to an eigenvalue decomposition of the covariance matrix with tangent part given in terms of the Weingarten operator corresponding to the mean curvature vector. Finally, in section \S\ref{sec:descrip} we obtain the limit ratios of the eigenvalues in terms of this curvature information, and invert the asymptotic series to get descriptors at scale for the case of hypersurfaces, where the second and third fundamental forms are completely given by the principal curvatures and principal directions.

These results show how Principal Component Analysis can be carried out on a general embedded Riemannian submanifold to probe its local geometry. It establishes the relationship between the statistical covariance analysis of the underlying point-set of the manifold and the classical differential-geometric curvature via the third fundamental form. Applying the integral invariant approach to hypersurfaces provides a method to build multi-scale descriptors of curvature also for the case of general codimension.


\section{PCA Integral Invariants of Riemannian Submanifolds}\label{sec:IntInv}

In our context, integral invariants are local integrals over domains of a submanifold determined by intersection with objects in the ambient space, like spheres. Two such integrals are the volume of the domain and the point in the ambient manifold that represents the center of mass of the region. A more interesting object is the covariance matrix obtained by integrating the relative covariance of the degrees of freedom of the points in the domain, i.e., the products of the coordinates of the points with respect to a chosen frame. In order to get a frame independent integral invariant, one takes the eigenvalue decomposition of the covariance matrix. Since the kernel domains have a natural scale, e.g., the radius of the sphere, it is useful to think of them as a matrix-valued function of scale at every point. Therefore, these integral invariants correspond to eigenvalues and eigenvectors that can be interpreted respectively as a set of scalar and frame-valued functions of scale at every point. The study of covariance matrices in order to obtain adapted frames of general submanifols was studied for example in \cite{broomhead1991local} and \cite{solis1993,solis2000}, whereas the integral invariant approach was developed in detail to extract the curvature information of surfaces in space in \cite{pottmann2007}.

In order to do this type of Principal Component Analysis on a general Riemannian submanifold and generalize local integral invariants, definitions using Cartesian coordinates must naturally be promoted to Riemann normal coordinates \cite{chavel2006}, \cite{oneil1983}.
If the $n$-dimensional submanifold $\cM^{(n)}$ sits inside an ambient Riemannian manifold $(\mc{N}^{(n+k)},g)$, the curves in $\mc{N}$ that generalize the axis used in $\RR^{n+k}$ are the geodesic curves $\ga_{\bv}(t)$ and these always exist and are unique locally at any point $p\in\mc{N}$ and direction $\bv$. Given an orthonormal frame in $T_p\cM\oplus N_p\cM$, the geodesics tangent to each of the vectors will trace out generalized coordinate axis in $\mc{N}$ that, through the exponential map will uniquely specify any point in a local neighborhood around $p$. Assuming $\mc{N}$ is geodesically complete to simplify the exposition, the exponential map collects all geodesics starting at $p$ by mapping straight lines through the origin in $T_p\mc{N}\cong\RR^{n+k}$ to geodesics through $p$:
$$
\exp_p: T_p\mc{M}\rightarrow\mc{N}\quad \text{ such that }\quad \exp_p(t\bv) = \ga_{t\bv}(1)=\ga_{\bv}(t).
$$
At any point $p$ there is a neighborhood $\tilde{\mc{U}}$ of $\bze$ in $T_p\mc{N}$ where $\exp$ is a diffeomorphism onto a neighborhood $\mc{U}$ of $p$ in $\mc{N}$. From this, for star-shaped $\tilde{\mc{U}}$, there is also a unique geodesic $\ga(t)$ connecting $p$ and any other point $q\in\mc{U}$ such that the tangent $\ga'(0)=\exp^{-1}_p(q)$. Moreover, the arclength of $\ga$ between the two points, i.e. the distance $d(p,q)$ between them determined by the metric $g$, is the length of the tangent vector representation through this map, $d(p,q)  = \| \exp^{-1}_p(q) \|$.
These normal neighborhoods allow the parametrization of points using the geodesic distances tangent to a given frame $\{\bE_\mu\}_{\mu=1}^{n+k}$ at $p$. The injectivity radius $r_p$ is the radius of the largest ball $B_{\bze}(\vep)$ in $T_p\mc{N}$ where $\exp$ is a diffeomorphism, so $\cB_p(r_p)=\exp_p(B_{\bze}(r_p))$ is the largest ball in $\mc{N}$ created by radial geodesics of the same length around $p$ where normal coordinates are well-defined. In fact $r_p>0$ always. Since our main theorems \ref{MainTh} and \ref{MainTh2} are asymptotic results with scale, in a general Riemannian manifold one could always use normal coordinates to study domains of submanifolds small enough so that they can be mapped to Euclidean space, thus, we propose the following general definition of PCA integral invariants in a general Riemannian manifold.

\begin{definition}
	Let $D$ be a measurable domain in a Riemannian manifold $(\mc{N},g)$ such that $D\subset \cB_p(r_p)$ for some point $p\in\mc{N}$, The \emph{integral invariants} associated to the moments of order 0, 1 and 2 of the geodesic coordinate functions of the points of $D$ with respect to $p$ are: \\
	the volume
	\begin{equation}
		V(D) = \int_D 1\; \text{dVol},
	\end{equation}
	the barycenter
	\begin{equation}
		\bb(D) = \frac{1}{V(D)}\int_D [\exp_p^{-1}(q)] \; \text{dVol},
	\end{equation} 
	and the eigenvalue decomposition of the covariance matrix:
	\begin{equation}
		C(D) =\int_D [\exp_{p}^{-1}(q)]\otimes [\exp_p^{-1}(q)] \; \text{dVol}.
	\end{equation} 
	Here dVol is the measure on $D$, restriction of the measure of $\mc{N}$ induced by the metric $g$, and the tensor product is to be understood as the outer product of the components of the $\exp^{-1}$ map in a chosen orthonormal basis of $T_p\mc{N}$. The reference point of the covariance matrix is often chosen to be the barycenter $\exp_p(\bb)$ instead of $p$.
\end{definition}

The two types of domains that we shall study are regions in a submanifold $\cM\subset\mc{N}$ determined by the intersection with a ball and a cylinder. Using the exponential map one can define such intersections by mapping Euclidean balls and higher-dimensional cylinders in $T_p\mc{N}$ to their geodesic generalizations in the ambient manifold $\mc{N}$.

\begin{definition}
	The \emph{spherical component} of radius $\vep\leq r_p$, at a point $p$ of a submanifold $\cM$ of a Riemannian manifold $\mc{N}$ is the domain given by:
\begin{equation}
	D_p(\vep):=\cM\cap \{q\in\mc{N}: \|\exp_p^{-1}(q)\|\leq\vep\leq r_p \}.
\end{equation}
\end{definition}

An element $\VV$ in the Grassmannian $\Gr{m}{n+k}$ is an $m$-dimensional linear subspace of $\RR^{n+k}$. Fixing a point and $m$-dimensional ball inside $\VV$, the standard three dimensional cylinder over the $xy$-plane can be generalized to an $\VV$-cylinder by taking all points in the ambient space that project down onto the ball inside $\VV$.

\begin{definition}
	The \emph{cylindrical component} of radius $\vep\leq r_p$, at a point $p$ of a submanifold $\cM$ of a Riemannian manifold $\mc{N}$ over the $m$-plane $\VV\in\Gr{m}{n+k}$, is the $\VV$-cylinder intersection:
\begin{equation}
\Cyll(\vep,\VV):=\cM\cap \{q\in\mc{N}: \|\projj{\exp_p^{-1}(q)}{\VV}\|\leq\vep\leq r_p \}, 
\end{equation}
where $\projj{\cdot}{\VV}$ is the orthogonal projection onto $\VV$ as a linear subspace of $T_p\mc{N}$. We shall write $\Cyll(\vep)$ when $\VV=T_p\cM$ is assumed.
\end{definition}

We will compute these integral invariants for embedded submanifolds in Euclidean ambient space, $\mc{N}=\RR^{n+k}$, where $\exp^{-1}_p(q)=\bs{q}-\bs{p}$ as vectors and the tensor product recovers the common  definition of PCA integral invariants studied in the literature. The points $q\in D$ are then parametrized by a vector $\bs{X}$ such that the barycenter is the center of mass
	\begin{equation}
		\bb(D) = \frac{1}{V(D)}\int_D \bs{X} \; \text{dVol},
	\end{equation} 
	and the the covariance matrix can be interpreted as analogous to a moment of inertia matrix, which for the cylindrical component shall be taken with respect to the center $p$, following the convention and motivation of \cite{solis2000}, 
	\begin{equation}
		C(\Cyll(\vep)) =\int_{\Cyll(\vep)} (\bs{X}-\bs{p})\otimes (\bs{X}-\bs{p}) \; \text{dVol},
	\end{equation} 
	whereas for the spherical component the covariance matrix shall be taken with respect to the barycenter following \cite{pottmann2007},
		\begin{equation}
		C(D_p(\vep)) =\int_{D_p(\vep)} (\bs{X}-\bb(D_p(\vep)))\otimes (\bs{X}-\bb(D_p(\vep))) \; \text{dVol}.
	\end{equation} 

Without loss of generality, these definitions could have been normalized by the volume of the domain to make the integral measure become a probability density and thus make the matrices actual statistical covariances.


\section{Third Fundamental Form of a Riemannian Submanifold}\label{sec:IIIform}

For a complete analysis of the the geometry of Riemannian submanifolds see \cite{chavel2006}, \cite{kobayashi1969}, \cite{oneil1983}, \cite{spivak1999}. 

Let $(\cM,g)$ be an $n$-dimensional manifold isometrically embedded in an $(n+k)$-dimensional Riemannian manifold $(\mc{N},\bar{g})$, and let $\nabla,\bar{\nabla}$ be the respective Levi-Civita connections. We shall write $g(\cdot,\cdot) = \metric{\cdot}{\cdot}$, classically called the \emph{first fundamental form} of $\cM$ in $\mc{N}$. Then, at any point $p\in\cM$ and for any vector $\by\in T_p\cM$, and vector field $\bs{X}\in\Gamma(T\cM)$, the metric connection of $\cM$ is the projection of the metric connection of $\mc{N}$: $\nabla_{\by}\bs{X} = (\bar{\nabla}_{\by}\bs{X})^\top$, where $(\;\cdot\;)^\top:T_p\mc{N}\rightarrow T_p\cM$. The \emph{second fundamental form $\II$ of $\cM$ in $\mc{N}$} is defined to be the normal projection of the ambient covariant derivative when acting on vectors fields tangent to $\cM$, i.e., denoting $(\;\cdot\;)^\perp:T_p\mc{N}\rightarrow N_p\cM$,
\begin{equation}
	\II(\bx,\by) = (\bar{\nabla}_{\by}\bs{X})^\perp, \quad \text{i.e.,} \quad \bar{\nabla}_{\by}\bs{X} = \nabla_{\by}\bs{X} + \II(\bx,\by),
\end{equation}
for all $\bx,\by\in T_p\cM$, and $\bs{X}\in\Gamma(T\cM)$ such that $\bs{X}\vert_p=\bx$. It is a symmetric bilinear form on the tangent space at every point taking values in the normal space, $\II:T_p\cM\otimes T_p\cM\rightarrow N_p\cM$. Fixing a normal vector $\bN\in N_p\cM$, the scalar-valued bilinear form $\metric{\II(\bx,\by)}{\bN}$ has a corresponding self-adjoint map $\oS_{\bN}\in\text{End}(T_p\cM)$, called the \emph{Weingarten map at $\bN$}, such that:
\begin{equation}
	\metric{\II(\bx,\by)}{\bN} = \metric{\oS_{\bN}\,\bx}{\by} = \metric{\bx}{\oS_{\bN}\,\by}.
\end{equation}
Fixing orthonormal bases $\{\bE_\mu\}_{\mu=1}^n$ of $T_p\cM$, and $\{\bN_j\}_{j=1}^k$ of $N_p\cM$, the components of the second fundamental form at point $p$ are:
\begin{equation}
	\II(\bE_\mu,\bE_\nu) = \sum_{j=1}^k \mr{II}^j(\bE_\mu,\bE_\nu)\bN_j = \sum_{j=1}^k \metric{\II(\bE_\mu,\bE_\nu)}{\bN_j}\;\bN_j = \sum_{j=1}^k \metric{\oS_j\,\bE_\mu}{\bE_\nu}\;\bN_j.
\end{equation}
The geometric meaning of $\II$ lies in the fact that the Weingarten map measures the tangential rate of change of normal vectors to $\cM$ when moving in tangent directions, cf. \cite[Eq. II.2.4]{chavel2006}:
$$
	\oS_{\bN}\,\bx = - (\bar{\nabla}_{\bx}\bs{N})^\top,
$$
for any $\bs{N}\in\Gamma(N\cM)$ such that $\bs{N}\vert_p=\bN$. From this, \cite[Ch. 4, Cor. 9, 10]{oneil1983}, $\II(\bx,\bx)$ is to be interpreted as the curve acceleration in $\mc{N}$ of a geodesic inside $\cM$ at $p$ with tangent velocity $\bx$. Therefore, $\II$ naturally measures the extrinsic curvature of the embedding since it represents the forced curving of the straightest lines in $\cM$ due to the curving of $\cM$ itself in $\mc{N}$.

The inverse function theorem and \cite[Ch. VII, Ex. 3.3]{kobayashi1969} establish the following lemma, of fundamental importance for the computations in the proofs of the present work.

\begin{lemma}
Let $\cM$ be an $n$-dimensional submanifold of an $(n+k)$-dimensional Riemannian manifold $(\mc{N},\, g)$, with the induced metric $g\vert_{\cM}$. For any point $p\in\cM$ and orthonormal basis $\{\bE_\mu\}_{\mu=1}^n$ of $T_p\cM$, it is possible to choose normal coordinates $(y^1,\dots,y^{n+k})$ in $\mc{N}$ such that the coordinate tangent vectors at the origin $\bs{Y}^1,\dots,\bs{Y}^n$ coincide with $\{\bE_\mu\}_{\mu=1}^n$, and $\bs{Y}^{n+1},\dots,\bs{Y}^{n+k}$ are an orthonormal basis $\{\bN_j\}_{j=1}^k$ of $N_p\cM$. Moreover, $\cM$ is locally given by a graph manifold $y^1=x^1,\dots, y^n=x^n,y^{n+1}=f^1(\bx),\dots, y^{n+k}=f^k(\bx)$, such that the components of the second fundamental form at $p$ can be written as:
\begin{equation}\label{eq:IIlocal}
	\II(\bE_\mu,\bE_\nu) =\sum_{j=1}^k \left[ \frac{\dd^2 f^j}{\dd x^\mu\dd x^\nu}(0) \right]\,\bN_j .
\end{equation}
\end{lemma}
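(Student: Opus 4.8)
The plan is to build the adapted chart directly out of the ambient exponential map and then to lean on the single property of a normal chart that does all the work here: the ambient Christoffel symbols — equivalently, the first derivatives of the ambient metric — vanish at the origin.

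First I would fix the coordinates. Complete the given orthonormal basis $\{\bE_\mu\}_{\mu=1}^n$ of $T_p\cM$ to an orthonormal basis $\{\bE_1,\dots,\bE_n,\bN_1,\dots,\bN_k\}$ of $T_p\mc{N}$ by choosing any orthonormal basis $\{\bN_j\}_{j=1}^k$ of the orthogonal complement $N_p\cM$, and let $(y^1,\dots,y^{n+k})$ be the normal coordinates on $\cB_p(r_p)$ determined by $\exp_p$ with respect to this frame; by construction the coordinate tangent vectors at the origin satisfy $\bs{Y}^\mu=\dd_{y^\mu}\vert_0=\bE_\mu$ and $\bs{Y}^{n+j}=\dd_{y^{n+j}}\vert_0=\bN_j$. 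Next I would produce the graph. The map $q\in\cM\mapsto(y^1(q),\dots,y^n(q))$ has differential at $p$ equal to the orthogonal projection $T_p\cM\to\RR^n$, which is an isomorphism since $T_p\cM=\mathrm{span}(\dd_{y^1},\dots,\dd_{y^n})\vert_0$; by the inverse function theorem a neighborhood of $p$ in $\cM$ — shrunk so that it lies inside the normal chart — is the graph $y^{n+j}=f^j(x^1,\dots,x^n)$ with $x^i:=y^i$ for $i\le n$ and $f^j$ smooth. Comparing $T_p\cM=\mathrm{span}(\dd_{y^\mu})\vert_0$ with the tangent space of the graph at the origin, spanned by $\dd_{x^\mu}+\sum_j(\dd f^j/\dd x^\mu)(0)\,\dd_{y^{n+j}}$, forces $(\dd f^j/\dd x^\mu)(0)=0$ for all $j$ and $\mu$.

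It then remains to evaluate $\II(\bE_\mu,\bE_\nu)$. Since $\II$ is tensorial in both slots I am free to choose the extension of $\bE_\mu$, and the convenient one is the coordinate vector field of the graph, $\bs{X}:=\dd_{x^\mu}+\sum_j(\dd f^j/\dd x^\mu)\,\dd_{y^{n+j}}\in\Gamma(T\cM)$, which satisfies $\bs{X}\vert_p=\bE_\mu$ by the vanishing of the first derivatives. Because all Christoffel symbols of $\bar{\nabla}$ vanish at the origin of a normal chart (\cite[Ch. VII, Ex. 3.3]{kobayashi1969}), the value $\bar{\nabla}_{\bE_\nu}\bs{X}$ at $p$ is just the ordinary directional derivative along $\dd_{y^\nu}$ of the component functions of $\bs{X}$; those components are constant in the $x$-slots and equal to $\dd f^j/\dd x^\mu$ in the $(n+j)$-th slot, so $\bar{\nabla}_{\bE_\nu}\bs{X}\vert_p=\sum_j\bigl(\dd^2 f^j/\dd x^\mu\dd x^\nu\bigr)(0)\,\dd_{y^{n+j}}\vert_0=\sum_j\bigl(\dd^2 f^j/\dd x^\mu\dd x^\nu\bigr)(0)\,\bN_j$. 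This vector is already normal to $\cM$, so applying $(\,\cdot\,)^\perp$ leaves it unchanged and we obtain \eqref{eq:IIlocal}; the equality of mixed partials simultaneously re-proves the symmetry $\II(\bE_\mu,\bE_\nu)=\II(\bE_\nu,\bE_\mu)$.

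I expect the only genuinely delicate point to be the clean use of ``Christoffel symbols vanish at the origin of a normal chart'': one has to recall that this is exactly the statement needed to collapse $\bar{\nabla}$ at $p$ into a plain partial derivative, and that nothing stronger about the Taylor expansion of $\exp_p$ is invoked. The remaining work — the two applications of the inverse function theorem and the bookkeeping of the components of $\bs{X}$ in the $y$-frame — is routine.
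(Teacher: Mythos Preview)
Your argument is correct and follows precisely the route the paper indicates: the paper does not spell out a proof but merely invokes the inverse function theorem together with \cite[Ch.~VII, Ex.~3.3]{kobayashi1969}, and your write-up is exactly a careful unpacking of those two ingredients --- the graph representation from the inverse function theorem and the vanishing of the ambient Christoffel symbols at the origin of the normal chart to reduce $\bar{\nabla}$ to a coordinate derivative. There is nothing to add or correct.
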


The invariance of the trace of $\II$ for any orthonormal tangent frame $\{\bE_\mu\}_{\mu=1}^n$ leads to the definition of the \emph{mean curvature} vector:
\begin{equation}
\bH = \sum_{\mu=1}^n\;\II(\bE_\mu,\bE_\mu) = \sum_{j=1}^k H^j\bN_j, \qquad\text{ where } H^j=\sum_{\mu=1}^n\mr{II}^j(\bE_\mu,\bE_\mu).
\end{equation}

The study of the intrinsic geometry of $(\cM,g)$ depends only on the metric and is given in terms of the Riemann curvature tensor:
$$
\bR(\bx,\by)\bz=(\nabla_{\bx}\nabla_{\by}-\nabla_{\by}\nabla_{\bx}-\nabla_{[\bx,\by]})\bs{Z},
$$
for any $\bx,\by,\bz\in T_p\cM$ and $\bs{Z}\in\Gamma(T\cM)$ such that $\bs{Z}\vert_p=\bz$. This fundamental tensor equivalently measures the integrability of parallel transport, geodesic deviation and local flatness. Its traces yield the \emph{Ricci tensor} 
$$
\Ric{\bx}{\by}=\sum_{\mu=1}^n\Rie{\bE_\mu}{\bx}{\by}{\bE_\mu} = \metric{\oR\,\bx}{\by},
$$
and the \emph{scalar curvature}, $\cR = \sum_{\mu}\Ric{\bE_\mu}{\bE_\mu}$. Here, $\oR\in\text{End}(T_p\cM)$ is the \emph{Ricci operator} associated to the Ricci tensor with respect to the metric.

Gau{\ss} Theorema Egregium establishes that the intrinsic curvature of surfaces is a particular combination of products of the components of the second fundamental form. This generalizes to higher dimension to 

\begin{theorem}[Gau{\ss} equation]\label{th:Gauss}
	The Riemann curvature tensor of a submanifold $\cM$ is related to the curvature $\bar{\bR}$ of the ambient manifold $\mc{N}$ via
\begin{equation}
	\Rie{\bx}{\by}{\bz}{\bw} = \metric{\bar{\bR}(\bx,\by)\bz}{\bw} +\metric{\II(\bx,\bw)}{\II(\by,\bz)} - \metric{\II(\bx,\bz)}{\II(\by,\bw)}
\end{equation}
for all $\bx,\by,\bz,\bw\in T_p\cM$.
\end{theorem}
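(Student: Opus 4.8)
The plan is to prove the Gau{\ss} equation by the classical route: extend the tangent vectors $\bx,\by,\bz,\bw\in T_p\cM$ to smooth vector fields $\bs{X},\bs{Y},\bs{Z},\bs{W}\in\Gamma(T\cM)$ near $p$, expand the ambient curvature operator $\bar{\bR}(\bs{X},\bs{Y})\bs{Z}$ by applying the splitting $\bar{\nabla}_{\bs Y}\bs Z=\nabla_{\bs Y}\bs Z+\II(\bs Y,\bs Z)$ twice, and then project onto $T_p\cM$ and pair with $\bs{W}$. Because both sides of the claimed identity are tensorial in all four arguments — the left side is the Riemann tensor, the right side a pointwise algebraic expression in $\II$ and $\bar{\bR}$ — it suffices to verify the identity for one convenient choice of extension; in the bookkeeping one may take $\bs{X},\bs{Y}$ to be coordinate fields so that $[\bs{X},\bs{Y}]=0$, or simply carry the bracket term along and observe that it is tangent to $\cM$.

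First I would compute
$$
\bar{\nabla}_{\bs{X}}\bar{\nabla}_{\bs{Y}}\bs{Z}=\bar{\nabla}_{\bs{X}}\bigl(\nabla_{\bs{Y}}\bs{Z}+\II(\bs{Y},\bs{Z})\bigr)=\nabla_{\bs{X}}\nabla_{\bs{Y}}\bs{Z}+\II(\bs{X},\nabla_{\bs{Y}}\bs{Z})+\bar{\nabla}_{\bs{X}}\II(\bs{Y},\bs{Z}),
$$
where $\II(\bs{X},\nabla_{\bs{Y}}\bs{Z})$ is normal and therefore drops out after projection and pairing with $\bs{W}$. The only nontrivial input is the Weingarten relation $\oS_{\bN}\,\bx=-(\bar{\nabla}_{\bx}\bs{N})^\top$ recorded above: since $\II(\bs{Y},\bs{Z})$ is a normal field, its ambient covariant derivative has tangential part $(\bar{\nabla}_{\bs{X}}\II(\bs{Y},\bs{Z}))^\top=-\oS_{\II(\bs{Y},\bs{Z})}\bs{X}$, the normal part being irrelevant. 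Hence $(\bar{\nabla}_{\bs{X}}\bar{\nabla}_{\bs{Y}}\bs{Z})^\top=\nabla_{\bs{X}}\nabla_{\bs{Y}}\bs{Z}-\oS_{\II(\bs{Y},\bs{Z})}\bs{X}$, and symmetrically in $\bs{X}\leftrightarrow\bs{Y}$; combined with $(\bar{\nabla}_{[\bs{X},\bs{Y}]}\bs{Z})^\top=\nabla_{[\bs{X},\bs{Y}]}\bs{Z}$, the alternating sum gives
$$
\bigl(\bar{\bR}(\bs{X},\bs{Y})\bs{Z}\bigr)^\top=\bR(\bs{X},\bs{Y})\bs{Z}-\oS_{\II(\bs{Y},\bs{Z})}\bs{X}+\oS_{\II(\bs{X},\bs{Z})}\bs{Y}.
$$

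Finally I would take the inner product with $\bw$ at $p$: the left-hand side becomes $\metric{\bar{\bR}(\bx,\by)\bz}{\bw}$ since $\bw\in T_p\cM$ annihilates the normal part of $\bar{\bR}(\bx,\by)\bz$, while the defining identity $\metric{\oS_{\bN}\,\bu}{\bv}=\metric{\II(\bu,\bv)}{\bN}$ of the Weingarten map turns the two remaining terms into $-\metric{\II(\bx,\bw)}{\II(\by,\bz)}+\metric{\II(\by,\bw)}{\II(\bx,\bz)}$, and rearranging produces exactly the stated formula. I do not anticipate a genuine obstacle here; the only points demanding care are the sign in the Weingarten identity and confirming independence of the chosen extensions, which is immediate from tensoriality of both sides (equivalently, every term that is not manifestly tensorial — the $\II(\bs{X},\nabla_{\bs{Y}}\bs{Z})$ contribution and the bracket terms — either cancels in the alternating sum or is killed by the projection onto $T_p\cM$).
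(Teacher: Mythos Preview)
Your proof is correct and is exactly the standard textbook derivation of the Gau{\ss} equation. Note, however, that the paper does not supply its own proof of this statement: Theorem~\ref{th:Gauss} is stated as a classical result (with the references \cite{chavel2006}, \cite{kobayashi1969}, \cite{oneil1983}, \cite{spivak1999} given at the start of \S\ref{sec:IIIform}), so there is nothing to compare against beyond observing that your argument is the one found in those sources.
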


In classical differential geometry, \cite{gray2006}, \cite{toponogov2006}, the third fundamental form is the natural object to construct out of scalar products after the first fundamental form, $\text{I}(\bx,\by)=\langle\bx,\by\rangle$, and the second fundamental form $\text{II}(\bx,\by)=\langle\oS\,\bx,\by\rangle$, so it is defined for hypersurfaces, e.g. \cite{liu1999}, as $$\text{III}(\bx,\by)=\metric{\oS\,\bx}{\oS\,\by} =\langle \oS^2\bx,\by\rangle.$$ However, it does not provide new information since it is completely determined by Gau{\ss} equation \ref{th:Gauss}, e.g., in Euclidean space \cite{kobayashi1969}:
\begin{equation}\label{eq:R-Wein}
\metric{\bs{\hat{S}}^2\bx}{\by}=H\metric{\oS\,\bx}{\by} - \Ric{\bx}{\by},	
\end{equation}
or, in terms of the Ricci operator, $\oS^2 = H\oS - \oR$. For a manifold $\cM$ of higher codimension $k$, there
are $k$ linearly independent normal vectors at every point and, as mentioned before, the generalized second fundamental form takes values in the normal bundle precisely to reflect this structure in terms of the corresponding Weingarten operators at every normal vector. Therefore, the natural generalization of $\metric{\oS\,\bx}{\oS\,\by}$ to this context is

\begin{definition}\label{def:III}
	The \emph{third fundamental form} of a Riemannian submanifold $\cM\subset\mc{N}$ is the fourth-rank tensor $\III\in (T_p\cM^*)^2\otimes N_p\cM^*\otimes N_p\cM$, given at every point $p\in\cM$ by
\begin{equation}
	\langle\; \III(\bx,\by)\,\bN,\,\bM\,\rangle := \langle\,\oS_{\bM}\;\bx\, ,\,\oS_{\bN}\;\by\,\rangle.
\end{equation}
for any $\bx,\by\in T_p\cM$, and $\bN,\bM\in N_p\cM$.
\end{definition}

At any specific point, and because the Weingarten maps are self-adjoint, the linear operator $\III(\bx,\by)\in\text{End}(N_p\cM)$ is written as the following linear combination, when a particular orthonormal basis $\{\bN_j\}_{j=1}^k$ of the normal space is fixed and $\bs{\eta}^j=g(\cdot,\bN_j)$ is the dual basis:
\begin{equation}
	\III(\bx,\by) = \sum_{i,\, j=1}^k \langle\,\oS_i\,\oS_j\;\bx\, ,\,\by\,\rangle\;\bs{\eta}^i\otimes\bN_j.
\end{equation}
This is due to the linearlity of the map $\bN\mapsto\oS_{\bN}:N_p\cM\rightarrow\text{End}(T_p\cM)$; if $\bN=\sum_j n^j\bN_j$ then
$$
\langle\,\oS_{\bN}\;\bx ,\,\by\,\rangle = \langle\,\II(\bx,\by),\,\bN\,\rangle = \sum_{j=1}^k n^j\langle\,\II(\bx,\by),\,\bN_j\rangle =\langle\,\left(\sum_{j=1}^k n^j\oS_j\right)\bx,\,\by\,\rangle,
$$ 
for all $\bx,\by\in T_p\cM$.

Let us define the \emph{tangent trace} of a tensor $\bs{A}\in (T_p\cM^*)^2\otimes N_p\cM^*\otimes N_p\cM$ as the operator sum of the evaluations at an orthonormal basis $\{\bE_\mu\}_{\mu=1}^n$ of $T_p\cM$:
\begin{equation}
	\tr_\parallel\bs{A} := \sum_{\mu=1}^n \bs{A}(\bE_\mu,\bE_\mu) \; \in\text{End}(N_p\cM),
\end{equation}
And let the \emph{normal trace} of such a tensor be 
\begin{equation}
	\tr_\perp\bs{A} := \sum_{j=1}^k \langle\; \III(\cdot,\cdot)\,\bN_j,\,\bN_j\,\rangle\;\in (T_p\cM^*)^2,
\end{equation}
for any orthonormal basis $\{\bN_j\}_{j=1}^k$ of $N_p\cM$. These tensors are well-defined since the sums are independent of the orthonormal basis chosen.

\begin{lemma}\label{lem:trnIII}
At any point $p\in\cM$, for any $\bx,\by\in T_p\cM$, and $\bN,\bM\in N_p\cM$, the normal trace of the third fundamental form is
\begin{equation}
	\tr_\perp\III (\bx,\by) =  \sum_{j=1}^k\langle\,\oS_j^2\;\bx,\,\by \,\rangle = \langle\, (\SH-\oR+\bar{\bs{\mc{R}}})\,\bx,\,\by \,\rangle ,
\end{equation}
where $\oR$ and $\bar{\bs{\mc{R}}}$ are the Ricci operators of $\cM$ and $\mc{N}$ respectively. In particular, the sum of squares of the Weingarten operators $\oS_j$, for an orthonormal basis $\{\bN_j\}_{j=1}^k$ of $N_p\cM$, is independent of the basis. The tangent trace of the third fundamental form is a linear operator on $N_p\cM$ whose components with respect to the metric are the Frobenius inner products of the corresponding Weingarten operators:
\begin{equation}
	\langle\; (\tr_\parallel\III)\;\bN,\,\bM\,\rangle = \tr(\oS_{\bN}\oS_{\bM}).
\end{equation}
The total trace is
\begin{equation}\label{eq:trIII}
	\tr\III = \tr_\perp\tr_\parallel\III = \|\bH\|^2 -\cR +\bar{\cR}.
\end{equation}
\end{lemma}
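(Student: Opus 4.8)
The plan is to treat the three assertions in the stated order; each is a short computation built on Definition \ref{def:III}, the one genuine ingredient being the contracted Gauss equation. For the normal trace, unwinding Definition \ref{def:III} against an orthonormal basis $\{\bN_j\}_{j=1}^k$ of $N_p\cM$ and using that each Weingarten operator $\oS_{\bN_j}=:\oS_j$ is self-adjoint gives $\langle\III(\bx,\by)\bN_j,\bN_j\rangle=\langle\oS_j\bx,\oS_j\by\rangle=\langle\oS_j^2\bx,\by\rangle$, so that $\tr_\perp\III(\bx,\by)=\sum_j\langle\oS_j^2\bx,\by\rangle$. To identify $\sum_j\oS_j^2$ I would contract the Gauss equation of Theorem \ref{th:Gauss}: set $\bw=\bE_\mu$, relabel the first argument as $\bE_\mu$ as well, and sum over an orthonormal tangent basis $\{\bE_\mu\}_{\mu=1}^n$. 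The left-hand side collapses to $\Ric{\bx}{\by}=\langle\oR\bx,\by\rangle$ by the definition of the Ricci operator. On the right, $\sum_\mu\langle\II(\bE_\mu,\bE_\mu),\II(\bx,\by)\rangle=\langle\bH,\II(\bx,\by)\rangle=\langle\SH\bx,\by\rangle$ since $\sum_\mu\II(\bE_\mu,\bE_\mu)=\bH$ and $\oS_{\bH}=\SH$; expanding $\II(\bE_\mu,\,\cdot\,)=\sum_j\langle\oS_j\bE_\mu,\,\cdot\,\rangle\bN_j$ and using self-adjointness together with Parseval's identity over $\{\bE_\mu\}$ gives $\sum_\mu\langle\II(\bE_\mu,\by),\II(\bE_\mu,\bx)\rangle=\sum_j\langle\oS_j^2\bx,\by\rangle$; and $\sum_\mu\langle\bar{\bR}(\bE_\mu,\bx)\by,\bE_\mu\rangle$ is the tangential trace of the ambient curvature, which I will write $\langle\bar{\bs{\mc{R}}}\bx,\by\rangle$ (this term vanishes when $\mc{N}$ is flat, the setting of the later sections). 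Rearranging yields $\sum_j\oS_j^2=\SH-\oR+\bar{\bs{\mc{R}}}$, hence the stated formula for $\tr_\perp\III$; basis-independence of $\sum_j\oS_j^2$ is then automatic, the right-hand side being built from canonical objects.

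The tangent trace is immediate from Definition \ref{def:III} and self-adjointness: for $\bN,\bM\in N_p\cM$,
\begin{equation*}
\langle(\tr_\parallel\III)\bN,\bM\rangle=\sum_{\mu=1}^n\langle\III(\bE_\mu,\bE_\mu)\bN,\bM\rangle=\sum_{\mu=1}^n\langle\oS_{\bM}\bE_\mu,\oS_{\bN}\bE_\mu\rangle=\sum_{\mu=1}^n\langle\oS_{\bN}\oS_{\bM}\bE_\mu,\bE_\mu\rangle=\tr(\oS_{\bN}\oS_{\bM}),
\end{equation*}
the Frobenius inner product, and independence of the tangent frame is clear since the final expression is an intrinsic trace. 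For the total trace, I would first note that the two iterated traces coincide, both equal to $\tr\big(\sum_j\oS_j^2\big)$: indeed $\tr_\perp\tr_\parallel\III=\sum_j\langle(\tr_\parallel\III)\bN_j,\bN_j\rangle=\sum_j\tr(\oS_j^2)$, while $\tr_\parallel\tr_\perp\III=\sum_\mu\sum_j\langle\oS_j^2\bE_\mu,\bE_\mu\rangle$ is the same sum. Substituting $\sum_j\oS_j^2=\SH-\oR+\bar{\bs{\mc{R}}}$ and taking traces, one gets $\tr(\oR)=\cR$ by definition of the scalar curvature, $\tr(\bar{\bs{\mc{R}}})=\bar{\cR}$ (with the same tangentially-traced reading), and $\tr(\SH)=\tr(\oS_{\bH})=\sum_j H^j\tr(\oS_j)=\sum_j(H^j)^2=\|\bH\|^2$, using $\tr(\oS_j)=\sum_\mu\mr{II}^j(\bE_\mu,\bE_\mu)=H^j$; this is exactly \eqref{eq:trIII}.

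I do not expect a substantive obstacle: the argument is essentially bookkeeping. The only step demanding care is the contraction of the Gauss equation in the first part — one must contract the correct pair of slots so that one of the two terms quadratic in $\II$ becomes $\langle\bH,\II(\bx,\by)\rangle=\langle\SH\bx,\by\rangle$ while the other becomes $\sum_j\langle\oS_j^2\bx,\by\rangle$ — together with keeping straight that the ambient curvature enters only through its tangential trace, so that the identity reduces correctly to \eqref{eq:R-Wein} in the hypersurface case in flat ambient space.
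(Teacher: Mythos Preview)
Your proposal is correct and follows essentially the same route as the paper: unwind Definition~\ref{def:III} using self-adjointness of the Weingarten operators, contract the Gau{\ss} equation over a tangent orthonormal frame to identify $\sum_j\oS_j^2$ with $\SH-\oR+\bar{\bs{\mc{R}}}$, read off the tangent trace directly as a Frobenius inner product, and take the trace once more for the total. Your caveat that $\bar{\bs{\mc{R}}}$ here really denotes the tangential partial trace of the ambient curvature (rather than the full ambient Ricci operator) is well taken and is exactly the reading implicit in the paper's proof.
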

\begin{proof}
The normal trace biliniar form has components
\begin{align}
\tr_\perp\III(\bE_\mu,\bE_\nu) & = \sum_{j=1}^k\metric{\oS_j\;\bE_\mu}{\oS_j\;\bE_\nu}= \sum_{j=1}^k\sum_{\al=1}^n\metric{\oS_j\bE_\al}{\bE_\mu}\metric{\oS_j\bE_\al}{\bE_\nu}\nonumber \\
& = \sum_{j=1}^k\sum_{\al=1}^n \mr{II}^j(\bE_\al,\bE_\mu)\mr{II}^j(\bE_\al,\bE_\nu) = \sum_{\al=1}^n\metric{\II(\bE_\al,\bE_\mu)}{\II(\bE_\al,\bE_\nu)}, \label{eq:trNcomp}
\end{align}
that using Gau{\ss} equation lead to the corresponding linear operator with respect to the metric:
\begin{align*}
\tr_\perp\III(\bE_\mu,\bE_\nu) & = \sum_{\al=1}^n\metric{\II(\bE_\al,\bE_\al)}{\II(\bE_\nu,\bE_\mu)} + \sum_{\al=1}^n \metric{\bar{\bR}(\bE_\al,\bE_\nu)\bE_\mu}{\bE_\al} - \sum_{\al=1}^n \metric{\bR(\bE_\al,\bE_\nu)\bE_\mu}{\bE_\al} \\
& = \metric{\II(\bE_\mu,\bE_\nu)}{\bH} + \bar{\bs{\mc{R}ic}}(\bE_\mu,\bE_\nu) - \Ric{\bE_\mu}{\bE_\nu} \\
& = \metric{\oS_{\bH}\,\bE_\mu}{\bE_\nu}+\metric{\bs{\bar{\mc{R}}}\,\bE_\mu}{\bE_\nu}-\metric{\bs{\hat{\mc{R}}}\,\bE_\mu}{\bE_\nu}.
\end{align*}
This is the generalization of the operator of the classical third fundamental form, equation \ref{eq:R-Wein}: $$\displaystyle \sum_{j=1}^k \oS_j^2 = \SH-\oR+\bar{\bs{\mc{R}}}.$$
The tangent trace is trivial by definition of trace of a linear operator with respect to the metric and the self-adjointness of the Weingarten operators:
$$
\metric{(\tr_\parallel\III)\;\bN}{\bM} = \sum_{\mu=1}^n\metric{\oS_{\bM}\oS_{\bN}\,\bE_\mu}{\bE_\mu} = (\oS_{\bM},\oS_{\bN})_F.
$$
In a fixed orthonormal basis this tensor is the linear combination
$$
\tr_\parallel\III = \sum_{i,\, j=1}^k \sum_{\mu=1}^n\langle\,\oS_i\,\oS_j\;\bE_\mu\, ,\,\bE_\mu\,\rangle\;\bs{\eta}^i\otimes\bN_j =  \sum_{i,\, j=1}^k \tr(\oS_i\,\oS_j) \;\bs{\eta}^i\otimes\bN_j,	
$$
whose components can be expressed in terms of the second fundamental form as
\begin{equation}\label{eq:trTcomp}
\tr(\oS_i\oS_j) =\sum^n_{\mu,\,\nu=1}\metric{\oS_i\;\bE_\mu}{\bE_\nu}\metric{\oS_j\;\bE_\mu}{\bE_\nu} =  \sum^n_{\mu,\,\nu=1} \mr{II}^i(\bE_\mu,\bE_\nu)\mr{II}^j(\bE_\mu,\bE_\nu).
\end{equation}
Taking the total trace of $\III$ is analogous to the complete contraction of the Riemann curvature tensor indices to obtain the scalar curvature:
\begin{align}
\tr\III = \tr_\parallel\tr_\perp\III & = \sum_{\mu=1}^n \metric{(\SH-\oR+\bar{\bs{\mc{R}}})\bE_\mu}{\bE_\mu} = \tr\SH - \tr\oR+\tr\bar{\bs{\mc{R}}}\nonumber \\
& = \sum_{\mu=1}^n\tr_\perp\III(\bE_\mu,\bE_\mu) = \sum_{\al,\,\bet}^n\|\II(\bE_\al,\bE_\bet)\|^2,\label{eq:trTotal}
\end{align}
where $\tr\SH =\sum_{\mu=1}^n\metric{\II(\bE_\mu,\bE_\mu)}{\bH} =\|\bH\|^2$, and the traces of the Ricci operators are by definition the scalar curvatures.
\end{proof}

Equations \ref{eq:trTcomp} and \ref{eq:trNcomp} will be recognized inside the elements of the tangent and normal matrix blocks in our covariance matrices to express its eigenvalues in terms of the third fundamental form.

The asymmetry of the components of the third fundamental form operator $\III(\bx,\by)$ encodes the curvature information of the connection defined on the normal bundle $N\cM$ by $(\bar{\nabla}_{\bx}\bs{N})^\perp$, for any $\bx\in T_p\cM,\;\bs{N}\in \Gamma(N\cM)$, where an analog to Gau{\ss} equation holds.

\begin{lemma}[Ricci equation]
	The Riemann curvature of the induced normal connection, $\bR_\perp$, satisfies:
\begin{equation}
	\langle\bR_\perp(\bx,\by)\bN,\bM\,\rangle =\langle\,\bar{\bR}(\bx,\by)\bN,\bM\,\rangle + \langle\,\III(\bx,\by)\bN,\bM\,\rangle - \langle\,\III(\bx,\by)\bM,\bN\,\rangle ,
\end{equation}
for all $\bx,\by\in T_p\cM$, and $\bN,\bM\in N_p\cM$, at any point $p\in\cM$.
\end{lemma}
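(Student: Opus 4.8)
The plan is to run essentially the same computation that establishes the Gauß equation (Theorem~\ref{th:Gauss}), but starting from the ambient curvature operator applied to a \emph{normal} vector field and retaining the \emph{normal} component of the output. First I would fix $\bx,\by\in T_p\cM$ and $\bN,\bM\in N_p\cM$ and extend them to tangent fields $\bs{X},\bs{Y}\in\Gamma(T\cM)$ and normal fields $\bs{N},\bs{M}\in\Gamma(N\cM)$; since $\bar{\bR}$, $\bR_\perp$ and $\III$ are all tensorial, the identity at $p$ will not depend on the extensions. The two structural ingredients are the Gauß formula $\bar{\nabla}_{\bx}\bs{X}=\nabla_{\bx}\bs{X}+\II(\bx,\bs{X})$ and the Weingarten formula
$$
\bar{\nabla}_{\bx}\bs{N} \;=\; -\,\oS_{\bN}\,\bx \;+\; \nabla^\perp_{\bx}\bs{N},
$$
which follows by splitting $\bar{\nabla}_{\bx}\bs{N}$ into its tangent and normal parts using the relation $\oS_{\bN}\,\bx=-(\bar{\nabla}_{\bx}\bs{N})^\top$ and the definition $\nabla^\perp_{\bx}\bs{N}=(\bar{\nabla}_{\bx}\bs{N})^\perp$ of the induced normal connection.

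Next I would differentiate once more: apply $\bar{\nabla}_{\bs{X}}$ to $\bar{\nabla}_{\bs{Y}}\bs{N}=-\oS_{\bs{N}}\bs{Y}+\nabla^\perp_{\bs{Y}}\bs{N}$, using the Gauß formula on the tangential summand $-\oS_{\bs{N}}\bs{Y}$ and the Weingarten formula on the normal summand $\nabla^\perp_{\bs{Y}}\bs{N}$. Collecting only the normal component gives
$$
\big(\bar{\nabla}_{\bs{X}}\bar{\nabla}_{\bs{Y}}\bs{N}\big)^\perp \;=\; -\,\II(\bs{X},\oS_{\bs{N}}\bs{Y}) \;+\; \nabla^\perp_{\bs{X}}\nabla^\perp_{\bs{Y}}\bs{N},
$$
while the purely tangential pieces $-\nabla_{\bs{X}}(\oS_{\bs{N}}\bs{Y})$ and $-\oS_{\nabla^\perp_{\bs{Y}}\bs{N}}\bs{X}$ are discarded (they are precisely what produces the Codazzi-type identity, not the Ricci one). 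Antisymmetrizing in $\bs{X},\bs{Y}$ and subtracting $\bar{\nabla}_{[\bs{X},\bs{Y}]}\bs{N}$, whose normal part is $\nabla^\perp_{[\bs{X},\bs{Y}]}\bs{N}$, the three $\nabla^\perp$ terms assemble into $\bR_\perp(\bx,\by)\bN$, yielding
$$
\big(\bar{\bR}(\bx,\by)\bN\big)^\perp \;=\; \bR_\perp(\bx,\by)\bN \;-\; \II(\bx,\oS_{\bN}\by) \;+\; \II(\by,\oS_{\bN}\bx).
$$

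Finally I would pair both sides with $\bM$ and recognize the last two terms through Definition~\ref{def:III}: $\metric{\II(\bx,\oS_{\bN}\by)}{\bM}=\metric{\oS_{\bM}\bx}{\oS_{\bN}\by}=\metric{\III(\bx,\by)\bN}{\bM}$, and likewise $\metric{\II(\by,\oS_{\bN}\bx)}{\bM}=\metric{\oS_{\bM}\by}{\oS_{\bN}\bx}=\metric{\oS_{\bN}\bx}{\oS_{\bM}\by}=\metric{\III(\bx,\by)\bM}{\bN}$; transposing $\bR_\perp$ to the left then gives the stated formula. I expect the only delicate point to be the sign bookkeeping in the Weingarten formula together with the correct tangent/normal separation after the second covariant derivative: one must keep $-\II(\bs{X},\oS_{\bs{N}}\bs{Y})$ in the normal part and not mislay or misassign the two tangential terms, which is exactly where a hasty computation would go wrong.
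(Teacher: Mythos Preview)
Your proof is correct. The sign bookkeeping, the tangent/normal splitting after the second covariant derivative, and the identification of the $\II$-terms with $\III$ via Definition~\ref{def:III} all check out, and your final rearrangement yields exactly the stated identity.

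The paper takes a shorter, less self-contained route: it simply quotes the classical Ricci equation from \cite[Ex.~II.11]{chavel2006} in the form
\[
\langle\bar{\bR}(\bx,\by)\bN,\bM\rangle-\langle\bR_\perp(\bx,\by)\bN,\bM\rangle
=\sum_{\mu}\bigl[\metric{\II(\bE_\mu,\bx)}{\bN}\metric{\II(\bE_\mu,\by)}{\bM}-\metric{\II(\bE_\mu,\by)}{\bN}\metric{\II(\bE_\mu,\bx)}{\bM}\bigr],
\]
then rewrites the right-hand side as $\metric{\oS_{\bN}\bx}{\oS_{\bM}\by}-\metric{\oS_{\bN}\by}{\oS_{\bM}\bx}$ using an orthonormal expansion, and finally recognizes these as $\langle\III(\bx,\by)\bM,\bN\rangle-\langle\III(\bx,\by)\bN,\bM\rangle$. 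In effect you have \emph{derived} the cited exercise from the Gau{\ss} and Weingarten formulas, whereas the paper takes it as known and only performs the translation into the language of $\III$. Your approach is more informative for a reader who has not seen the Ricci equation before; the paper's approach is quicker and emphasizes that the lemma is really just a restatement of a standard identity in the new notation.
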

\begin{proof}
Writing the classical equation \cite[Ex. II.11]{chavel2006} in terms of Weingarten maps leads to
\begin{align*}
& \langle\,\bar{\bR}(\bx,\by)\bN,\bM\,\rangle -\langle\bR_\perp(\bx,\by)\bN,\bM\,\rangle =\sum_{\mu=1}^n\left[\; \metric{\II(\bE_\mu,\bx)}{\bN}\metric{\II(\bE_\mu,\by)}{\bM} +\right. \\ & \qquad\qquad\qquad\qquad\qquad\qquad\qquad\qquad\qquad\qquad\qquad \left. - \metric{\II(\bE_\mu,\by)}{\bN}\metric{\II(\bE_\mu,\bx)}{\bM}\; \right] \\
& =\sum_{\mu=1}^n \metric{\oS_{\bN}\;\bx}{\bE_\mu} \metric{\oS_{\bM}\;\by}{\bE_\mu} - \metric{\oS_{\bN}\;\by}{\bE_\mu} \metric{\oS_{\bM}\;\bx}{\bE_\mu} = \metric{\oS_{\bN}\;\bx}{\oS_{\bM}\;\by} - \metric{\oS_{\bN}\;\by}{\oS_{\bM}\;\bx}.
\end{align*}
for any orthonormal basis $\{\bE_\mu\}_{\mu=1}^n$ of $T_p\cM$.
\end{proof}


\section{Cylindrical Covariance Analysis}\label{sec:cylCov}

In this section we compute the integral invariants of the cylindrical domain around a point on an $n$-dimensional submanifold $\cM$ of $\RR^{n+k}$. In the case the cylinder is not normal to the manifold at the point, we can only establish the leading order terms, but that is sufficient in the generic case to be able to detect the tangent space of the manifold by the scaling behaviour of the eigenvalues of the covariance matrix. Once the cylinder is fixed to be normal to this tangent space, the integral invariants can be computed to next-to-leading order to see how they encode the geometric information of the third fundamental form.

We shall always work in a neighborhood $U\subset\RR^{n+k}$ of $p\in\cM$, sufficiently small so that $U\cap\cM$ is given by a graph representation $[x^1,\dots,x^n, f^1(\bx),\dots,f^k(\bx)]^T$ over its tangent space, i.e., $\bze$ represents $p$, $\bx=[x^1,\dots,x^n]^T\in T_p\cM$, and $\nabla f^j(\bze)=\bze$, so that the manifold is approximated at $p$ by its osculating paraboloids.

\begin{lemma}
	The first fundamental form components of a graph manifold $\cM\subset\RR^{n+k}$, parametrized by $[x^1,\dots, x^n,f^1(\bx),\dots,f^k(\bx)]^T\in T_p\cM\oplus N_p\cM\cong\RR^{n+k}$, are:
\begin{equation}
	 g_{\mu\nu}(\bx) = \del_{\mu\nu} + \sum_{j=1}^k \frac{\dd f^j}{\dd x^\mu}\frac{\dd f^j}{\dd x^\nu} .
\end{equation}
The induced measure on $\cM$ in these coordinates is given by
\begin{equation}
\mr{dVol}= \sqrt{\det g(\bx)}\,d^n\bx = \left(1 + \frac{1}{2}\sum_{j=1}^k\sum^n_{\al=1}\left[\sum_{\bet=1}^n\left(\frac{\dd^2 f^j}{\dd x^\al\dd x^\bet}(0)\right) x^\bet \right]^2 +\cO(x^3)\right)\, d^n\bx.
\end{equation}
\end{lemma}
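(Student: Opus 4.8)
The plan is a direct pullback computation for the metric, followed by a short Taylor expansion of the resulting Gram determinant; I expect no real obstacle beyond careful order-counting in $|\bx|$, assuming the $f^j$ are at least $C^3$ near $p$. First I would write the graph parametrization as $\Phi(\bx)=(x^1,\dots,x^n,f^1(\bx),\dots,f^k(\bx))$, so that the coordinate tangent fields are $\dd_\mu\Phi=(\bE_\mu,\dd_\mu f^1,\dots,\dd_\mu f^k)$, with $\bE_\mu$ the $\mu$-th standard basis vector of $T_p\cM$. The first fundamental form is by definition the pullback metric $g_{\mu\nu}(\bx)=\metric{\dd_\mu\Phi}{\dd_\nu\Phi}$, and expanding the ambient Euclidean inner product in the split coordinates gives at once $g_{\mu\nu}=\del_{\mu\nu}+\sum_{j=1}^k\dd_\mu f^j\,\dd_\nu f^j$, which is the first assertion.

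For the volume element I would collect the first derivatives into the $k\times n$ Jacobian $J(\bx)$ with entries $J_{j\mu}=\dd_\mu f^j(\bx)$, so that $g(\bx)=I_n+J(\bx)^\top J(\bx)$. The normalization $\nabla f^j(\bze)=\bze$ at $p$ gives, by Taylor's theorem, $\dd_\mu f^j(\bx)=\sum_{\bet=1}^n\frac{\dd^2 f^j}{\dd x^\mu\dd x^\bet}(0)\,x^\bet+\cO(|\bx|^2)$, hence $J=\cO(|\bx|)$ and $J^\top J=\cO(|\bx|^2)$. Then I would use the elementary expansion $\det(I_n+M)=1+\tr M+\cO(\|M\|^2)$, coming from the cofactor/characteristic-polynomial expansion, applied to $M=J^\top J$: since $\tr(J^\top J)=\sum_{j,\mu}(\dd_\mu f^j)^2$ and $\|J^\top J\|^2=\cO(|\bx|^4)$, this yields $\det g(\bx)=1+\sum_{j=1}^k\sum_{\mu=1}^n(\dd_\mu f^j)^2+\cO(|\bx|^4)$.

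To finish, I would substitute the linear-in-$\bx$ part of $\dd_\mu f^j$ into the quadratic term: replacing $(\dd_\mu f^j)^2$ by $\big(\sum_{\bet}\frac{\dd^2 f^j}{\dd x^\mu\dd x^\bet}(0)\,x^\bet\big)^2$ changes the sum only by $\cO(|\bx|^3)$, and then apply $\sqrt{1+t}=1+\tfrac12 t+\cO(t^2)$ with $t=\cO(|\bx|^2)$, so that the $\cO(t^2)=\cO(|\bx|^4)$ correction gets absorbed into the remainder. This produces exactly the claimed expansion $\mr{dVol}=\sqrt{\det g(\bx)}\,d^n\bx$ with quadratic term $\tfrac12\sum_{j,\al}[\sum_{\bet}\frac{\dd^2 f^j}{\dd x^\al\dd x^\bet}(0)\,x^\bet]^2$ and remainder $\cO(|\bx|^3)$ (written $\cO(x^3)$ in the statement). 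The one point demanding care is precisely this order-counting — checking that dropping the quadratic and higher Taylor terms of $\dd_\mu f^j$ inside the square costs only $\cO(|\bx|^3)$, and that both the determinant expansion and the square-root expansion leave remainders of order $|\bx|^4$; everything else is routine.
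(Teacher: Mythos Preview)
Your proposal is correct and follows essentially the same approach as the paper's own proof: compute the coordinate tangent vectors of the graph parametrization, form the induced metric $g=I_n+J^\top J$, and expand $\sqrt{\det g}$ using $\det(I_n+M)\approx 1+\tr M$ and $\sqrt{1+t}\approx 1+\tfrac12 t$, substituting the linear Taylor expansion of $\dd_\mu f^j$. Your order-counting is slightly more explicit than the paper's, but the structure of the argument is identical.
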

\begin{proof}
	The tangent space in these coordinates is spanned by the vectors
	$$
	\bs{X}_\mu = \frac{\dd}{\dd x^\mu}[x^1,\dots, x^n,f^1(\bx),\dots,f^k(\bx)]^T = [0,\dots,1,\dots, 0,\frac{\dd f^1}{\dd x^\mu},\dots,\frac{\dd f^k}{\dd x^\mu}]^T,
	$$
	for $\mu=1,\dots, n$, which yields the canonical orthonormal basis at $p$ since $\nabla f^j(\bze)=\bze$. The induced metric tensor is then
	$$
	g_{\mu\nu}(\bx) = \metric{\bs{X}_\mu}{\bs{X}_\nu} = \del_{\mu\nu} + \sum_{j=1}^k \frac{\dd f^j}{\dd x^\mu}\frac{\dd f^j}{\dd x^\nu}.
	$$
	From this, recalling that the $f^j(\bx)$ have Taylor expansions starting at order $2$ in these coordinates, the matrix of the metric components is of the form $[g]=\Id_n + [h]$, where the correction matrix $[h]=[\sum_j\dd_\mu f^j\dd_\nu f^j]$ is small because we are in a neighborhood of $\bze$ with $\nabla f^j(\bze)=\bze$. Let
	$$
	f^j(\bx) = \frac{1}{2}\sum_{\al,\bet =1}^n \left(\frac{\dd^2 f^j}{\dd x^\al\dd x^\bet}(0)\right) x^\al x^\bet + \cO(x^3),
	$$
	 for every $j=1,\dots, k$, then
	$$
	\frac{\dd f^j}{\dd x^\mu}  =\sum_{\bet=1}^n\left(\frac{\dd^2 f^j}{\dd x^\bet\dd x^\mu}(0)\right) x^\bet  +\cO(x^2).
	$$
The natural volume form of a Riemannian manifold is given by $\sqrt{\det g}\; dx^1\wedge\dots\wedge dx^n$, \cite[Ch. 7, Lem. 19]{oneil1983}, whose lowest order approximation is $\det g\approx 1 + \tr h$, so $\sqrt{\det g}\approx 1+\frac{1}{2}\tr h$, i.e.,
	$$
	\sqrt{\det g(\bx)} = 1 + \frac{1}{2}\sum_{\al=1}^n\sum_{j=1}^k\left(\frac{\dd f^j}{\dd x^\al}\right)^2+... = 1 + \frac{1}{2}\sum_{\al=1}^n\sum_{j=1}^k\left[ \sum_{\bet=1}^n\left(\frac{\dd^2 f^j}{\dd x^\bet\dd x^\mu}(0)\right) x^\bet  \right]^2 + \cO(x^3).
	$$
\end{proof}

 In the rest of this paper we shall abbreviate second derivatives at the origin by
$$
\ka^j_{\al\bet}= \ka^j_{\bet\al} :=\frac{\dd^2 f^j}{\dd x^\al\dd x^\bet}(0) ,
$$
motivated by the notation of hypersurface principal curvatures, which are the eigenvalues of the local Hessian of the defining function.
We can now compute the Taylor expansion of the integral invariants in the chosen coordinates, and then relate the terms to the curvature differential invariants which are always combinations of second derivatives.

\begin{theorem}\label{th:volCyl}
	The $n$-dimensional volume of the cylindrical component for a generic $\VV\in\Gr{n}{n+k}$, such that $\VV^\perp\cap T_p\cM=\{\bze\}$, is to leading order the volume of the ellipsoid of intersection between the $\VV$-cylinder and $T_p\cM$:
\begin{equation}
	V(\Cyll(\vep,\VV)) = V_n(1)\prod_{\mu=1}^n \ell_\mu +\cO(\vep^{n+1}),
\end{equation}
where $\ell_\mu$ are the the \emph{principal semi-axes} of the ellipsoid. When $\VV=T_p\cM$, the volume is
\begin{equation}\label{eq:volCyl}
	V(\Cyll(\vep)) = V_n(\vep)\left[ 1+\frac{\vep^2}{2(n+2)}\;\tr\III +\cO(\vep^4) \right]
\end{equation}
where $\tr\III=\|\bH\|^2-\cR$.
\end{theorem}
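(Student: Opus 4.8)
The plan is to parametrize the cylindrical domain $\Cyll(\vep,\VV)$ by the graph coordinates $\bx = [x^1,\dots,x^n]^T$ over $T_p\cM$ and translate the cylinder constraint into a condition on these coordinates, then integrate the volume form computed in the previous lemma. First I would observe that a point of $\cM$ near $p$ has position vector $[\bx, f^1(\bx),\dots,f^k(\bx)]^T$, and the cylinder condition $\|\projj{\exp_p^{-1}(q)}{\VV}\| \leq \vep$ reads $\|P_\VV[\bx, f(\bx)]^T\|\leq\vep$, where $P_\VV$ is orthogonal projection onto $\VV$. For the generic statement, since $\VV^\perp\cap T_p\cM=\{\bze\}$, the restriction of $P_\VV$ to $T_p\cM$ is a linear isomorphism onto $\VV$; because $f^j(\bx)=\cO(x^2)$, the constraint to leading order is $\|P_\VV|_{T_p\cM}\,\bx\|\leq\vep$, which cuts out an ellipsoid in $T_p\cM$ whose principal semi-axes $\ell_\mu$ are $\vep$ divided by the singular values of $P_\VV|_{T_p\cM}$. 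Integrating $\mathrm{dVol} = (1+\cO(x^2))\,d^n\bx$ over this region gives $V_n(1)\prod_\mu\ell_\mu + \cO(\vep^{n+1})$, since the volume form correction and the $\cO(x^2)$ perturbation of the domain boundary both contribute at relative order $\vep^2$, hence absolute order $\vep^{n+2}$ — well inside the claimed $\cO(\vep^{n+1})$ error.

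For the second, sharper statement with $\VV = T_p\cM$, the projection $P_\VV$ is exactly the coordinate projection $[\bx,f(\bx)]^T\mapsto\bx$, so the domain is precisely the Euclidean ball $\|\bx\|\leq\vep$ in $T_p\cM$ with \emph{no} higher-order correction to the region of integration. Thus
\begin{equation*}
V(\Cyll(\vep)) = \int_{\|\bx\|\leq\vep}\sqrt{\det g(\bx)}\,d^n\bx = \int_{\|\bx\|\leq\vep}\left(1 + \frac12\sum_{j=1}^k\sum_{\al=1}^n\Bigl(\sum_{\bet=1}^n\ka^j_{\al\bet}x^\bet\Bigr)^2 + \cO(x^3)\right)d^n\bx.
\end{equation*}
The leading term is $V_n(\vep)$. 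For the quadratic term I would use the standard moment identity on the Euclidean ball, $\int_{\|\bx\|\leq\vep} x^\bet x^\ga\,d^n\bx = \delta^{\bet\ga}\,\frac{\vep^2}{n+2}\,V_n(\vep)$, to get
\begin{equation*}
\frac12\sum_{j=1}^k\sum_{\al,\bet,\ga=1}^n \ka^j_{\al\bet}\ka^j_{\al\ga}\int_{\|\bx\|\leq\vep}x^\bet x^\ga\,d^n\bx = \frac{\vep^2}{2(n+2)}\,V_n(\vep)\sum_{j=1}^k\sum_{\al,\bet=1}^n(\ka^j_{\al\bet})^2.
\end{equation*}
By the local expression \eqref{eq:IIlocal} for the second fundamental form, $\ka^j_{\al\bet}=\mathrm{II}^j(\bE_\al,\bE_\bet)$, so the double sum is $\sum_{\al,\bet}\|\II(\bE_\al,\bE_\bet)\|^2$, which by \eqref{eq:trTotal} in Lemma \ref{lem:trnIII} equals $\tr\III$. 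Finally, the odd-moment terms vanish by symmetry of the ball, so the $\cO(x^3)$ remainder integrates to $\cO(\vep^{n+4})$, i.e. relative order $\cO(\vep^4)$, giving \eqref{eq:volCyl} with $\tr\III = \|\bH\|^2 - \cR$ (taking the ambient $\mc{N}=\RR^{n+k}$, so $\bar{\cR}=0$) by equation \eqref{eq:trIII}.

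The main obstacle is the generic case: one must argue carefully that perturbing the integration domain from the flat ellipsoid to the true cylinder-intersection region changes the volume only at order $\vep^{n+2}$. This requires controlling how the boundary $\|P_\VV[\bx,f(\bx)]^T\| = \vep$ deviates from $\|P_\VV|_{T_p\cM}\,\bx\|=\vep$; since $f(\bx)=\cO(x^2)$ and $\|\bx\| = \cO(\vep)$ on the domain, the boundary moves by $\cO(\vep^2)$ in a tube of $(n-1)$-dimensional measure $\cO(\vep^{n-1})$, contributing $\cO(\vep^{n+1})$ — so in fact the generic estimate is tight at $\cO(\vep^{n+1})$, which is exactly what is claimed, and no cancellation is needed there. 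The cleaner, fully quantitative part is the $\VV=T_p\cM$ case, where the domain is exactly a ball and only the moment computation on the ball, together with Lemma \ref{lem:trnIII}, is required; I expect that to be essentially mechanical.
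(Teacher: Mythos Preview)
Your proposal is correct and follows essentially the same route as the paper: in the generic case you identify the leading-order domain as the ellipsoid $\|P_{\VV}|_{T_p\cM}\,\bx\|\leq\vep$ (the paper writes this as $\bx^T[A A^T]\bx\leq\vep^2$ in an explicit basis), and for $\VV=T_p\cM$ you integrate the volume-form expansion over the exact ball using the quadratic moment identity, which is equivalent to the paper's spherical-coordinate computation via the appendix constants. Your error discussion for the generic boundary perturbation is in fact more explicit than the paper's, which simply invokes the tangent-space approximation without detailing the $\cO(\vep^{n+1})$ bound.
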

\begin{proof}
To compute the leading term of $V(\Cyll(\vep,\VV))$ we can approximate $\cM$ near $p$ by its tangent space, such that, fixing local coordinates with a basis for $T_p\cM\oplus N_p\cM$, a point is specified by $\bs{X}=[\bx,\bze]^T$, with $\bx\in T_p\cM,\;\bze\in N_p\cM$. Since $\VV^\perp\cap T_p\cM=\{\bze\}$, we have $T_p\cM\oplus\VV^\perp=\RR^{n+k}$, and of course $\VV\oplus\VV^\perp=\RR^{n+k}$. Let $\{\bE_\mu\}_{\mu=1}^n$ be an orthornomal basis of $T_p\cM$, and $\{\bu_\al\}_{\al=1}^n\cup\{\bv_j\}_{j=1}^k$ an orthonormal basis of $\VV\oplus\VV^\perp$, then the elements of the former are a linear combination of the latter, so there are matrices $A, B$ such that:
$$
\bE_\mu = \sum_{\al=1}^n A^\al_\mu \bu_\al + \sum_{j=1}^k B^j_\mu\bv_j .
$$
We need to find the region $\|\projj{\bs{X}}{\VV}\|\leq\vep$, and since $\bs{X}=\sum_{\mu}x^\mu\bE_\mu$, when $\bs{X}\in T_p\cM$, the projection is
$$
\projj{\bs{X}}{\VV} = \sum_{\al=1}^n \metric{\bs{X}}{\bu_\al}\;\bu_\al = \sum_{\al=1}^n\sum_{\mu=1}^n x^\mu A^\al_\mu\bu_\al,
$$
hence, the domain of integration in $\bx$ in this approximation is
$$
\|\projj{\bs{X}}{\VV}\|^2 =\sum_{\al=1}^n\left(\sum_{\mu=1}^n x^\mu A^\al_\mu\right)^2  \leq\vep^2.
$$
This is a quadratic equation that can be written as
\begin{align*}
	\sum_{\mu ,\,\nu}^n x^\mu\left[ \sum_{\al=1}^n A^\al_\mu A^\al_\nu \right] x^\nu = \bx^T [A\cdot A^T] \bx = \by^T\cdot\by=\|\by\|^2 \leq \vep^2,
\end{align*}
where $\by=A^T\bx$. The matrix $[A\cdot A^T]$ is positive definite since it is clearly nonnegative, and if $\bx\in\ker A^T$ for nonzero $\bx$, then $\projj{\bs{X}}{\VV}=\bze$, thus $\bs{X}\in\VV^\perp$, which contradicts $\bs{X}\in T_p\cM$ under our assumption $\VV^\perp\cap T_p\cM=\{\bze\}$. Therefore, the cylindrical domain is an $n$-dimensional ellipsoid in the tangent space at $p$, whose volume is given in terms of its principal semi-axes:
$$
V(\Cyll(\vep,\VV)) =\frac{\pi^{n/2}}{\Gamma(\frac{n}{2}+1)}\prod^n_{\mu=1}\ell_\mu + \cO(\vep^{n+1}) .
$$

When $\VV=T_p\cM$, the local graph approximation of $\cM$ over $T_p\cM$ yields
$$\projj{\bs{X}}{T_p\cM} = \|\projj{[\bx,f^1(\bx),\dots,f^k(\bx)]^T}{T_p\cM}\|=\|\bx\| \leq\vep,$$
thus, we are integrating $\sqrt{\det g(\bx)}$ over the ball $B^{(n)}_p(\vep)\subset T_p\cM$, which can be computed using the integrals in the appendix:
\begin{align*}
	V(\Cyll(\vep)) & = \int_{\SS^{n-1}}d\,\SS\int^\vep_0 \rho^{n-1}\left( 1+\frac{1}{2}\sum^k_{i=1}\sum^n_{\al=1}\left[\sum^n_{\bet=1}\ka^i_{\al\bet}\rho\,\cx^\bet \right]^2 +\cO(x^3) \right)d\rho \\
	& = V_n(\vep) + \frac{\vep^{n+2}}{2(n+2)}\sum^k_{i=1}\sum^n_{\al=1}\sum^n_{\bet,\ga}\ka^i_{\al\bet}\ka^i_{\al\ga}\int_{\SS^{n-1}}\cx^\bet\cx^\ga \,d\SS +\cO(\vep^{n+4}) \\
	& = V_n(\vep) + \frac{C_2\,\vep^{n+2}}{2(n+2)}\sum^k_{i=1}\sum^n_{\al,\bet}\;(\ka^i_{\al\bet})^2 +\cO(\vep^{n+4}) \\
	& = V_n(\vep) + \frac{V_n(\vep)\,\vep^{2}}{2(n+2)}\sum^n_{\al,\bet}\;\metric{\II(\bE_\al,\bE_\bet)}{\II(\bE_\al,\bE_\bet)}+\cO(\vep^{n+4}).
\end{align*}
Here the spherical integral is only nonzero when $\bet=\ga$, and the last term is the component expression of equation \ref{eq:trIII}.
\end{proof}

\begin{proposition}
	The barycenter of the cylindrical component, for $\VV$ as in the previous theorem, is 
\begin{equation}
	\bb(\Cyll(\vep,\VV)) = \bze +\cO(\vep^2).
\end{equation}
	In the case $\VV=T_p\cM$, the barycenter is:
\begin{equation}
	\bb(\Cyll(\vep)) = [ \;\bze,\; \frac{\vep^2}{2(n+2)}\;\bH\;]^T +\cO(\vep^4).
\end{equation}
\end{proposition}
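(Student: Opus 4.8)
The plan is to expand $\bb(\Cyll(\vep,\VV))=V(\Cyll(\vep,\VV))^{-1}\int\bs{X}\,\text{dVol}$ directly in the graph coordinates introduced above, in which a point of $\cM$ near $p$ is $\bs{X}=[\bx,f^1(\bx),\dots,f^k(\bx)]^T$ with $f^j(\bx)=\frac12\sum_{\al,\bet}\ka^j_{\al\bet}x^\al x^\bet+\cO(x^3)$ and $\sqrt{\det g(\bx)}=1+\cO(x^2)$. Everything is driven by the parity of the integrands over the (nearly) symmetric domains, which forces the odd-degree moments to vanish to low order.

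For a generic $\VV$ I would approximate $\cM$ by $T_p\cM$ as in the proof of Theorem \ref{th:volCyl}: the projection of $[0,f(\bx)]^T$ onto $\VV$ is $\cO(x^2)$, so the defining inequality of the domain becomes $\bx^T A A^T\bx\le\vep^2-R_3(\bx)+\cO(x^4)$ with $R_3(\bx)=\cO(x^3)$, i.e.\ the symmetric ellipsoid $\bx^T A A^T\bx\le\vep^2$ perturbed radially by an amount of size $\cO(\vep^2)$. Over the symmetric ellipsoid $\int\bx\,d^n\bx=0$, and the even weight $\sqrt{\det g}=1+\cO(x^2)$ does not spoil this, so the tangential moment is controlled by the odd cubic perturbation and comes out $\cO(\vep^{n+2})$; meanwhile the normal part $f(\bx)=\cO(x^2)$ integrates to $\cO(\vep^{n+2})$ as well. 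Dividing by $V(\Cyll(\vep,\VV))=\cO(\vep^n)$ yields $\bb(\Cyll(\vep,\VV))=\bze+\cO(\vep^2)$.

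For $\VV=T_p\cM$ the domain is exactly the round ball $B^{(n)}_p(\vep)=\{\|\bx\|\le\vep\}\subset T_p\cM$, since $\projj{[\bx,f(\bx)]^T}{T_p\cM}=\bx$ and no boundary perturbation occurs. The tangential component of $\int\bs{X}\,\text{dVol}$ is $\int_{B_\vep}\bx\,\sqrt{\det g(\bx)}\,d^n\bx$, and because $\sqrt{\det g}=1+\frac12\sum_{j,\al}(\sum_\bet\ka^j_{\al\bet}x^\bet)^2+\cO(x^3)$ the integrand is odd through degree $3$, so it vanishes up to $\cO(\vep^{n+4})$. The normal component is $\int_{B_\vep}f^j(\bx)\,\sqrt{\det g(\bx)}\,d^n\bx=\frac12\sum_{\al,\bet}\ka^j_{\al\bet}\int_{B_\vep}x^\al x^\bet\,d^n\bx+\cO(\vep^{n+4})$, and using the moment integral (cf.\ the appendix) $\int_{B_\vep}x^\al x^\bet\,d^n\bx=\delta_{\al\bet}\,V_n(\vep)\vep^2/(n+2)$ together with $H^j=\sum_\al\mr{II}^j(\bE_\al,\bE_\al)=\sum_\al\ka^j_{\al\al}$ from the local form \ref{eq:IIlocal}, this equals $\frac{V_n(\vep)\vep^2}{2(n+2)}H^j+\cO(\vep^{n+4})$. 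Dividing by $V(\Cyll(\vep))=V_n(\vep)(1+\cO(\vep^2))$ from Theorem \ref{th:volCyl}, the $\cO(\vep^2)$ volume correction only affects the answer at order $\cO(\vep^4)$, giving $\bb(\Cyll(\vep))=[\,\bze,\ \frac{\vep^2}{2(n+2)}\bH\,]^T+\cO(\vep^4)$.

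I expect the main obstacle to be the order bookkeeping in the generic-$\VV$ case: one must confirm that the deformation of the ellipsoidal domain caused by the graph functions $f^j$ enters only at order $\vep^2$ relative to the ellipsoid, so that the first nonzero tangential moment is $\cO(\vep^{n+2})$ and not $\cO(\vep^{n+1})$ — precisely what is needed for the quotient by the volume to land at $\cO(\vep^2)$ rather than $\cO(\vep)$. In the aligned case $\VV=T_p\cM$ this difficulty disappears, since the domain is a genuine round ball and everything reduces to the single moment $\int_{B_\vep}x^\al x^\bet\,d^n\bx$ together with the identification $\sum_\al\ka^j_{\al\al}=H^j$ coming from \ref{eq:IIlocal}.
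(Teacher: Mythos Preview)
Your proposal is correct and follows essentially the same approach as the paper's proof: approximating by the tangent ellipsoid and invoking parity for the generic $\VV$ case, and for $\VV=T_p\cM$ exploiting that the domain is an exact ball so that odd integrands kill the tangent moments through order $\vep^{n+3}$, while the normal component reduces to the single second moment $\int_{B_\vep}x^\al x^\bet\,d^n\bx=\del_{\al\bet}V_n(\vep)\vep^2/(n+2)$ identified with $H^j$ via \ref{eq:IIlocal}. Your discussion of the domain perturbation in the generic case is in fact more explicit than the paper's one-line justification.
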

\begin{proof}
For generic $\VV$, approximating the manifold again by its tangent space, $\bs{X}=[\bx,\bze +\cO(\vep^2)]^T$, the normal component does not contribute until order two and the tangent component also vanishes at order $1$ in $\vep$. When $\VV=T_p\cM$, we saw that the integration domain reduces to a ball. The integrals of the tangent components $x^\mu$ weighed by $\sqrt{\det g}$ are of order $\cO(\vep^{n+4})$, since the first terms in the expansion have odd powers in the coordinates. On the other hand the normal components integrate as:
\begin{align*}
	V[\bb]^j & = \int_{\SS^{n-1}}d\,\SS\int^\vep_0 f^j\sqrt{\det g}\rho^{n-1}\, d\rho =  \int_{\SS^{n-1}}d\,\SS\int^\vep_0\rho^{n-1}\left(\frac{1}{2}\sum_{\al,\,\bet=1}^n\ka^j_{\al\,\bet}\rho^2\cx^\al\cx^\bet + \cO(x^3)\right) d\rho \\
	& = \frac{\vep^{n+2}}{2(n+2)}\sum_{\al,\bet=1}^n\ka^j_{\al\bet}\int_{\SS^{n-1}}\cx^\al\cx^\bet d\,\SS +\cO(\vep^{n+4}) = \frac{C_2\,\vep^{n+2}}{2(n+2)}\, H^j + \cO(\vep^{n+4}),
\end{align*}
Dividing by $V=V(\Cyll(\vep))$ cancels $C_2\vep^n=V_n(\vep)$ to leading order.
\end{proof}

In order to study the eigenvalue decomposition of the covariance matrix we need to establish how to determine the limit eigenvectors and the first two terms of the series expansion of the eigenvalues, so that computing the integrals in an arbitrary orthonormal basis produces blocks identifiable in terms of the coordinate expressions of the second and third fundamental forms in that basis. An analogous result to the matrix expansion in \cite{alvarez2018a} generalizes to higher codimension.

\begin{lemma}\label{lem:matrix}
Let $C(\vep)$ be an $(n+k)\times(n+k)$ real symmetric matrix depending on a real parameter $\vep$ with convergent series expansion in a neighborhood of $0$ such that:
$$
C(\vep) = \vep^2
\left( \begin{array}{@{}c|c@{}}
   \begin{matrix}
		a\,\Id_{n}
   \end{matrix} 
      & 0_{n\times k} \\
   \cmidrule[0.4pt]{1-2}
   0_{k\times n} & 0_{k\times k}  \\
\end{array}  \right)
	+\,\vep^4
\left( \begin{array}{@{}c|c@{}}
   \begin{matrix}
       \mr{A}_{n\times n}
   \end{matrix} 
      & \mr{B}_{n\times k} \\
   \cmidrule[0.4pt]{1-2}
   \mr{B}_{k\times n} & \Gamma_{k\times k}  \\
\end{array} \right) + \cO(\vep^{5}),
$$
where $a\neq 0$, and the blocks $\mr{A,B},\Gamma$ are not completely zero. Let $[\bV]_\top, [\bV]_\perp$ denote the first $n$ and last $k$ components of a vector in $\RR^{n+k}$. Then the series of eigenvectors of $C(\vep)$ form an orthonormal basis of $\RR^{n+k}$ that converges for $\vep\rightarrow 0$. The first $n$ eigenvalues are $\lbd_\mu(\vep)=a\vep^2+\lbd^{(4)}_\mu\vep^4+\cO(\vep^5)$, where $\lbd^{(4)}_\mu$ and the corresponding limit eigenvectors $\{\bV^{(0)}_\mu\}_{\mu=1}^n$ satisfy the eigenvalue decomposition of $\mr{A}$:
$$
(\lbd^{(4)}_\mu\,\Id_n -\mr{A})\, [\bs{V}^{(0)}_\mu]_\top = 0_{n\times 1},\qquad [\bs{V}^{(0)}_\mu]_\perp = 0_{k\times 1}. 
$$
The last $k$ eigenvalues are $\lbd_j(\vep)=\lbd^{(4)}_j\vep^4+\cO(\vep^5)$, where $\lbd^{(4)}_j$ and the corresponding limit eigenvectors $\{\bV^{(0)}_j\}_{j=n+1}^{n+k}$ satisfy the eigenvalue decomposition of $\Gamma$:
$$
(\lbd^{(4)}_j\,\Id_k -\Gamma)\, [\bs{V}^{(0)}_j]_\perp = 0_{n\times 1},\qquad [\bs{V}^{(0)}_j]_\top = 0_{n\times 1}. 
$$
Therefore, the fourth-order term of the eigenvalues is given by the eigenvalues of the blocks $A$ and $\Gamma$, with the respective eigenvectors as the limit eigenvectors of $C(\vep)$ for $\vep\rightarrow 0$.
\end{lemma}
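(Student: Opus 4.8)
The plan is to follow the matrix‑expansion argument of \cite{alvarez2018a} and reduce the statement to classical analytic (grouped) perturbation theory of symmetric matrices, after a rescaling that removes the overall vanishing of $C(\vep)$ at $\vep=0$. Note first that, since the remainder in the hypothesis is $\cO(\vep^5)$, the $\vep^3$‑coefficient of $C(\vep)$ must vanish, so the matrix‑valued function
\[
D(\vep):=\vep^{-2}C(\vep)=D_0+D_2\,\vep^2+\cO(\vep^3),\qquad
D_0=\begin{pmatrix} a\,\Id_n & 0\\ 0 & 0\end{pmatrix},\qquad
D_2=\begin{pmatrix} \mr{A} & \mr{B}\\ \mr{B}^{T} & \Gamma\end{pmatrix},
\]
is real symmetric and real‑analytic in $\vep$ on a neighborhood of $0$; moreover $C(\vep)$ and $D(\vep)$ have the same eigenvectors, and their eigenvalues are related by $\lbd_i(\vep)=\vep^2\mu_i(\vep)$, where $\mu_i(\vep)$ are the eigenvalues of $D(\vep)$. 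By Rellich's theorem on analytic self‑adjoint families (see, e.g., Kato, \emph{Perturbation Theory for Linear Operators}), there are real‑analytic scalars $\mu_1(\vep),\dots,\mu_{n+k}(\vep)$ and real‑analytic vectors $\bs{V}_1(\vep),\dots,\bs{V}_{n+k}(\vep)$, orthonormal for every $\vep$ near $0$, with $D(\vep)\bs{V}_i(\vep)=\mu_i(\vep)\bs{V}_i(\vep)$. These are also eigenvectors of $C(\vep)$, and they converge as $\vep\to0$ to the orthonormal basis $\{\bs{V}^{(0)}_i\}:=\{\bs{V}_i(0)\}$ of $\RR^{n+k}$, which is the claimed convergence.

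Next I would exploit the spectral gap of $D_0$: its eigenvalues are $a$ (multiplicity $n$, eigenspace $E_\top=\RR^n\oplus\{0\}$) and $0$ (multiplicity $k$, eigenspace $E_\perp=\{0\}\oplus\RR^k$), and $a\neq0$ separates them. By continuity, for $\vep$ small exactly $n$ of the $\mu_i(\vep)$ lie near $a$ and $k$ near $0$; relabel so that $\mu_\mu(0)=a$ for $1\le\mu\le n$ and $\mu_j(0)=0$ for $n+1\le j\le n+k$. Continuity of the eigenvectors then forces $\bs{V}^{(0)}_\mu\in E_\top$, i.e.\ $[\bs{V}^{(0)}_\mu]_\perp=0$, for $\mu\le n$, and $\bs{V}^{(0)}_j\in E_\perp$, i.e.\ $[\bs{V}^{(0)}_j]_\top=0$, for $j>n$. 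Hence $\lbd_\mu(\vep)=\vep^2\mu_\mu(\vep)=a\vep^2+\cO(\vep^3)$ and $\lbd_j(\vep)=\vep^2\mu_j(\vep)=\cO(\vep^3)$, and it only remains to pin down the $\vep^4$‑coefficients.

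Finally I would read off Taylor coefficients from the eigen‑equation. Writing $\mu_i(\vep)=\mu_i^{(0)}+\mu_i^{(1)}\vep+\mu_i^{(2)}\vep^2+\cdots$ and $\bs{V}_i(\vep)=\bs{V}_i^{(0)}+\bs{V}_i^{(1)}\vep+\cdots$, the $\vep^1$‑coefficient of $D(\vep)\bs{V}_i(\vep)=\mu_i(\vep)\bs{V}_i(\vep)$ is $D_0\bs{V}_i^{(1)}=\mu_i^{(1)}\bs{V}_i^{(0)}+\mu_i^{(0)}\bs{V}_i^{(1)}$ (the linear coefficient of $D$ is $0$); pairing with $\bs{V}_i^{(0)}$ and using self‑adjointness of $D_0$ gives $\mu_i^{(1)}=0$, so $\lbd_i(\vep)=\mu_i^{(0)}\vep^2+\mu_i^{(2)}\vep^4+\cO(\vep^5)$ with no cubic term, matching the asserted form on setting $\lbd^{(4)}_i:=\mu_i^{(2)}$ (and $\mu_i^{(0)}=a$ resp.\ $0$). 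The $\vep^2$‑coefficient of the eigen‑equation is $D_2\bs{V}_i^{(0)}+D_0\bs{V}_i^{(2)}=\mu_i^{(2)}\bs{V}_i^{(0)}+\mu_i^{(0)}\bs{V}_i^{(2)}$; pairing with $\bs{V}_j^{(0)}$ for $j$ in the same cluster as $i$ (so $\mu_j^{(0)}=\mu_i^{(0)}$), the $D_0\bs{V}_i^{(2)}$ and $\mu_i^{(0)}\bs{V}_i^{(2)}$ contributions cancel by self‑adjointness, leaving $\langle D_2\bs{V}_i^{(0)},\bs{V}_j^{(0)}\rangle=\lbd^{(4)}_i\,\delta_{ij}$. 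Since $\{\bs{V}^{(0)}_\mu\}_{\mu=1}^n$ is an orthonormal basis of $E_\top$ on which $D_2$ restricts to $\mr{A}$, and $\{\bs{V}^{(0)}_j\}_{j=n+1}^{n+k}$ one of $E_\perp$ on which $D_2$ restricts to $\Gamma$, this is exactly the two claimed spectral decompositions $(\lbd^{(4)}_\mu\Id_n-\mr{A})[\bs{V}^{(0)}_\mu]_\top=0$ and $(\lbd^{(4)}_j\Id_k-\Gamma)[\bs{V}^{(0)}_j]_\perp=0$; the off‑diagonal block $\mr{B}$ maps $E_\top$ to $E_\perp$ and back, hence plays no role here (it first enters through the transverse components of $\bs{V}_i^{(2)}$, affecting only the discarded higher‑order terms).

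The one genuine obstacle is that $C(0)=0$, so perturbation theory cannot be applied directly at $\vep=0$: the rescaling $D(\vep)=\vep^{-2}C(\vep)$ is precisely what produces a leading matrix $D_0$ whose spectral gap sorts the eigenvalues into the tangent and normal clusters and makes the block structure of the $\vep^4$‑term meaningful, after which the computation is the textbook grouped perturbation argument and the only imported ingredient is Rellich's analyticity of the eigensystem, which I would quote rather than reprove. A harmless ambiguity persists when $\mr{A}$ or $\Gamma$ has repeated eigenvalues, in which case some $\lbd^{(4)}_i$ coincide and the limit eigenvectors are fixed only up to rotations inside the shared eigenspace; Rellich still supplies an analytic orthonormal eigensystem, and the argument uses only that $\{\bs{V}^{(0)}_i\}$ is some orthonormal basis diagonalizing the relevant cluster block.
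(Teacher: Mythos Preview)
Your argument is correct and follows the same outline as the paper's proof---invoke Rellich's analyticity for Hermitian families, then match Taylor coefficients of the eigen-equation order by order to isolate the block eigenproblems for $\mr{A}$ and $\Gamma$. The one substantive difference is that you first rescale to $D(\vep)=\vep^{-2}C(\vep)$ so that $D(0)$ already has the spectral gap $\{a\}\cup\{0\}$, which immediately localizes the limit eigenvectors in $E_\top$ and $E_\perp$ and reduces the orders you must track by two. The paper instead applies Rellich directly to $C(\vep)$, accepting the totally degenerate limit $C(0)=0$, and recovers the splitting by arguing that the eigenvalues of $C(\vep)$ must agree with those of its already-diagonal $\vep^2$-term up to $\cO(\vep^4)$; it then reads off the block equations at orders $\vep^2$ and $\vep^4$. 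Both routes are valid, so your remark that ``perturbation theory cannot be applied directly at $\vep=0$'' is a little too strong: it \emph{can} be, it is just less transparent without your rescaling. The rescaling buys you a cleaner identification of the two clusters and avoids the somewhat informal norm-bound step in the paper; the paper's version, on the other hand, keeps everything in terms of $C(\vep)$ and its actual eigenvalues $\lbd_i(\vep)$ throughout.
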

\begin{proof}
The eigenvalue decomposition $C(\vep)\bV(\vep)=\lbd(\vep)\bV(\vep)$ can be written as a convergent series expansion in $\vep$ within a neighborhood of $0$ for all Hermitian matrices of converging power series elements \cite{rellich1969}:
\begin{align*}
& [\; \vep^2
\left( \begin{array}{@{}c|c@{}}
   \begin{matrix}
		a\,\Id_{n}
   \end{matrix} 
      & 0_{n\times k} \\
   \cmidrule[0.4pt]{1-2}
   0_{k\times n} & 0_{k\times k}  \\
\end{array}  \right)
	+\vep^4
\left( \begin{array}{@{}c|c@{}}
   \begin{matrix}
       \mr{A}_{n\times n}
   \end{matrix} 
      & \mr{B}_{n\times k} \\
   \cmidrule[0.4pt]{1-2}
   \mr{B}_{k\times n} & \Gamma_{k\times k}  \\
\end{array} \right) + \cO(\vep^{5})\; ]\cdot [\, \bs{V}^{(0)} + \bs{V}^{(1)}\vep+\bs{V}^{(2)}\vep^2+\dots \,] = \\
&\qquad = (\lbd^{(1)}\vep^1+\lbd^{(2)}\vep^2+\lbd^{(3)}\vep^3+\lbd^{(4)}\vep^4+\dots)[\, \bs{V}^{(0)} + \bs{V}^{(1)}\vep+\bs{V}^{(2)}\vep^2+\dots \,].
\end{align*}
The zero matrix $C(0)$ is the limit when $\vep\rightarrow 0$, with $\lbd(0)=\lbd^{(0)}=0$ as a totally degenerate eigenvalue of multiplicity $(n+k)$. By \cite[ch. I, Th. 1]{rellich1969}, for $\vep >0$, this eigenvalue branches out  into $(n+k)$ eigenvalues $\lbd_i(\vep)$ with $(n+k)$ orthonormal eigenvectors $\bs{V}_i(\vep)$, all convergent in a neighborhood of $0$. Thus, the vectors $\bV^{(0)}_i=\lim_{\vep\rightarrow 0}\bV_i(\vep)$ are a unique orthonormal basis of $\RR^{n+k}$ that is completely determined by the perturbation matrix.

The eigenvalue difference between $C(\vep)$ and its full diagonalization is bounded by the matrix norm difference between them, which implies $\lbd^{(1)}=\lbd^{(3)}=0$, and also $\lbd^{(2)}_i=a$, for $i=1,\dots,n$, and $\lbd^{(2)}_i=0$, for $i=n+1,\dots,n+k$, since $C(\vep)$ is already diagonal up to that order. One can obtain the relations satisfied by $\lbd^{(4)}$ and $\bV^{(0)}$ equating order by order. 
At second order, $\lbd^{(2)}_i=a$ is nonzero for $i=1,\dots,n$, hence
\begin{align*}
& [\; 
\left( \begin{array}{@{}c|c@{}}
   \begin{matrix}
		a\,\Id_{n}
   \end{matrix} 
      & 0_{n\times k} \\
   \cmidrule[0.4pt]{1-2}
   0_{k\times n} & 0_{k\times k} \\
\end{array}  \right)
	-\lbd^{(2)}_i\,\Id_{n+k}\, ]\,\bs{V}^{(0)}_i =
\left( \begin{array}{@{}c|c@{}}
   \begin{matrix}
		0_{n\times n}
   \end{matrix} 
      & 0_{n\times k} \\
   \cmidrule[0.4pt]{1-2}
   0_{k\times n} & -a\,\Id_k  \\
\end{array}  \right)\,\bs{V}^{(0)}_i = 0
\end{align*}
implies that $[\bV^{(0)}_\mu]_\perp=0_{k\times 1}$, for the limit of the first $n$ eigenvectors. At fourth order we have
\begin{align*}
&
[\, \lbd^{(4)}_i\,\Id_{n+k}- 
\left( \begin{array}{@{}c|c@{}}
   \begin{matrix}
		\mr{A}_{n\times n}
   \end{matrix} 
      & \mr{B}_{n\times k} \\
   \cmidrule[0.4pt]{1-2}
   \mr{B}_{k\times n} & \Gamma_{k\times k}  \\
\end{array}  \right) 
	 \, ]\,\bs{V}^{(0)}_i
=
[\; 
\left( \begin{array}{@{}c|c@{}}
   \begin{matrix}
		a\,\Id_{n}
   \end{matrix} 
      & 0_{n\times k} \\
   \cmidrule[0.4pt]{1-2}
   0_{k\times n} & 0_{k\times k}  \\
\end{array}  \right)
	-\lbd^{(2)}_i\,\Id_{n+k}\, ]\,\bs{V}^{(2)}_i,
\end{align*}
which in the present case, $i=1,\dots,n$, makes the right-hand side become $0$ for the first $n$ rows. On the other hand, $[\bV^{(0)}_i]_\perp=0_{k\times 1}$ makes $\mr{B}$ not contribute in the left-hand side, hence the first $n$ rows lead to the equation:
$$
(\lbd^{(4)}_i\,\Id_n -\mr{A})\,[\bs{V}^{(0)}_i]_\top = 0_{n\times 1}.
$$
When $i=n+1,\dots,n+k$, an analogous argument using $\lbd^{(2)}_i=0$, leads to $[\bV^{(0)}_i]_\top=0_{n\times 1}$, and in turn to:
$$
(\lbd^{(4)}_i\,\Id_n -\Gamma)\,[\bs{V}^{(0)}_i]_\perp = 0_{k\times 1}.
$$
Since the limit eigenvectors are an orthonormal basis they cannot be zero and, therefore, the previous equations establish $\lbd^{(4)}_i$ and the nonzero components of $[\bV^{(0)}_i]$ as the eigenvalue decomposition of $\mr{A}$ and $\Gamma$, which always has a solution due to being symmetric matrices.
\end{proof}

The previous lemma is a fundamental step to establish the main theorem of this and the next section.

\begin{theorem}\label{MainTh}
	 For $\VV\in\Gr{n}{n+k}$ such that $\VV^\perp\cap T_p\cM=\{\bze\}$, i.e. for non-normal transversality, and when $\Cyll(\vep,\VV)$ is finite, the covariance matrix $C_p(\vep,\VV)$ has as limit eigenvectors spanning $T_p\cM$ those corresponding to the first $n$ eigenvalues, which scale as $\vep^2$. The other $k$ eigenvalues scaling at higher order have limit eigenvectors that span $N_p\cM$:
		\begin{align}
			& \lbd_\mu (\Cyll(\vep,\VV)) =\frac{\vep^2}{n+2}\ell^2_\mu\, V_n(1)\prod_{\al=1}^n \ell_\al+\;\cO(\vep^{n+3}),\quad\quad\;\; \mu=1,\dots, n, \\
			& \lbd_j (\Cyll(\vep,\VV)) = 0+\cO(\vep^{n+3}),\qquad\quad\qquad\quad\qquad\quad\qquad\; j=n+1,\dots, n+k,
		\end{align}
		where $\ell_\mu$ are the \emph{principal lengths} of the ellipsoid in \ref{th:volCyl}. When $\VV=T_p\cM$, let $\lbd_l[\cdot]$ denote taking the $l$-th eigenvalue of a linear operator at $p$, or of its associated bilinar form with respect to the metric. Then the eigenvalues of the covariance matrix of the cylindrical component are:
\begin{align}\label{eq:tanEVDCyl}
		&  \lbd_\mu (\Cyll(\vep)) =V_n(\vep)\left[\frac{\vep^2}{n+2}+\frac{\vep^4}{2(n+2)(n+4)}\lbd_\mu[\,(\tr\III )\,\Id_n +2\,\tr_\perp\III\,] +\;\cO(\vep^6)\right] \\
& \lbd_j (\Cyll(\vep)) =V_n(\vep)\left[\frac{\vep^4}{4(n+2)(n+4)} \lbd_j[\,\bH\otimes\bH + 2\,\tr_\parallel\III\,] +\cO(\vep^6)\right] \label{eq:norEVDCyl}
\end{align}
for all $\mu=1,\dots, n$, and $j=n+1,\dots, n+k$. Moreover, the corresponding first $n$ eigenvectors converge to the principal directions of the operator $\tr_\perp\III =\oS_{\bH}-\oR$, and the last $k$ eigenvectors to those of $\bH\otimes\bH + 2\,\tr_\parallel\III$.
\end{theorem}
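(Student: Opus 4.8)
The plan is to work throughout in the local graph chart $[x^1,\dots,x^n,f^1(\bx),\dots,f^k(\bx)]^T$ over $T_p\cM$ introduced above, in which $\bze$ represents $p$, $\nabla f^j(\bze)=\bze$, the Hessians at the origin are $\ka^j_{\al\bet}$, and $\mathrm{dVol}=\sqrt{\det g(\bx)}\,d^n\bx$ admits the expansion established earlier in this section. Since the reference point for the cylindrical covariance is $p=\bze$, one has $C_p(\vep,\VV)_{ab}=\int X_aX_b\,\mathrm{dVol}$ with $\bs{X}=[\bx,f^1(\bx),\dots,f^k(\bx)]^T$. The proof then reduces to four steps: (i) describe the integration domain in the $\bx$ variables; (ii) Taylor-expand the integrand $X_aX_b\sqrt{\det g}$ to the required order; (iii) evaluate the surviving monomial moments over a ball or ellipsoid with the elementary integrals of the appendix, only even total degree contributing; and (iv) recognise the quadratic contractions of the $\ka^j_{\al\bet}$ that appear as the components \ref{eq:trTcomp}, \ref{eq:trNcomp}, \ref{eq:trTotal} of $\tr_\parallel\III$, $\tr_\perp\III$, $\tr\III$ and as the components of $\bH\otimes\bH$, then feed the resulting block structure into Lemma \ref{lem:matrix}.

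For generic $\VV$ with $\VV^\perp\cap T_p\cM=\{\bze\}$ (precisely the case in which, after shrinking $U$, $\Cyll(\vep,\VV)$ is a bounded piece of the graph for small $\vep$), I would argue as in Theorem \ref{th:volCyl} that replacing $\cM$ by $T_p\cM$ near $p$ moves the boundary of the domain only by $\cO(\vep^2)$, and that the tangential part of the domain is the nondegenerate ellipsoid of Theorem \ref{th:volCyl}. Writing $C_p(\vep,\VV)$ in an orthonormal basis adapted to $T_p\cM\oplus N_p\cM$, the tangent--tangent block is the second-moment matrix of this ellipsoid, diagonal in its principal-axes frame with entries $\tfrac{1}{n+2}\ell_\mu^2\,V_n(1)\prod_\al\ell_\al$ up to $\cO(\vep^{n+3})$, while the tangent--normal and normal--normal blocks are $\cO(\vep^{n+4})$ since $f^j$ is quadratic to leading order and odd moments over the centrally symmetric domain vanish. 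Thus $C_p(\vep,\VV)=\vep^{n+2}\,\diag(M,0_{k\times k})+\cO(\vep^{n+3})$ with $M$ positive definite, and the perturbation argument of Lemma \ref{lem:matrix} applied to $\vep^{-(n+2)}C_p(\vep,\VV)$ forces the first $n$ eigenvalues to scale like $\vep^{n+2}$ with limit eigenvectors in the positive eigenspace $T_p\cM$ of $M$, and the remaining $k$ eigenvalues into $\cO(\vep^{n+3})$ with limit eigenvectors in $\ker M=N_p\cM$.

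When $\VV=T_p\cM$ the domain is \emph{exactly} the coordinate ball $\|\bx\|\le\vep$, because $\projj{[\bx,f(\bx)]^T}{T_p\cM}=\bx$. Expanding: $\int_{\|\bx\|\le\vep}x^\mu x^\nu\sqrt{\det g}\,d^n\bx$ gives $\tfrac{\vep^2}{n+2}\delta^{\mu\nu}V_n(\vep)$ at leading order and, at order $\vep^4$, the fourth moment of $x^\mu x^\nu$ against the quadratic term of $\sqrt{\det g}$, which by the ball-moment identity together with \ref{eq:trNcomp} and \ref{eq:trTotal} equals $\tfrac{\vep^4V_n(\vep)}{2(n+2)(n+4)}[(\tr\III)\delta^{\mu\nu}+2\,\tr_\perp\III(\bE_\mu,\bE_\nu)]$; the normal block $\int f^if^j\sqrt{\det g}$ starts at order $\vep^4$ with value $\tfrac{\vep^4V_n(\vep)}{4(n+2)(n+4)}[H^iH^j+2\,\tr(\oS_i\oS_j)]$ by the same identity and \ref{eq:trTcomp}; and the tangent--normal block is $\cO(\vep^4)$ with vanishing $\vep^2$ and $\vep^3$ parts. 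Dividing by $V_n(\vep)$ puts $C_p(\vep)$ in the normal form of Lemma \ref{lem:matrix} with $a=\tfrac1{n+2}$, $\mr{A}=\tfrac{(\tr\III)\Id_n+2\,\tr_\perp\III}{2(n+2)(n+4)}$ and $\Gamma=\tfrac{\bH\otimes\bH+2\,\tr_\parallel\III}{4(n+2)(n+4)}$; the lemma then yields \ref{eq:tanEVDCyl}--\ref{eq:norEVDCyl}, and since adding a multiple of the identity does not change eigenvectors, the first $n$ limit eigenvectors are the principal directions of $\tr_\perp\III=\oS_{\bH}-\oR$ (Lemma \ref{lem:trnIII}, the ambient Ricci term vanishing in $\RR^{n+k}$) and the last $k$ those of $\bH\otimes\bH+2\,\tr_\parallel\III$.

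The main obstacle is the bookkeeping in steps (ii)--(iv): one must expand $X_aX_b\sqrt{\det g}$ consistently, discard every odd-degree monomial (so that the first correction is genuinely of order $\vep^4$ and there is no $\vep^3$ term, as Lemma \ref{lem:matrix} requires), and then pattern-match the several contractions $\sum_{\al,j}\ka^j_{\al\mu}\ka^j_{\al\nu}$, $\sum_{\al,\bet,j}(\ka^j_{\al\bet})^2$, $\big(\sum_\al\ka^i_{\al\al}\big)\big(\sum_\ga\ka^j_{\ga\ga}\big)$, $\sum_{\al,\bet}\ka^i_{\al\bet}\ka^j_{\al\bet}$ onto the tensor components \ref{eq:trTcomp}--\ref{eq:trTotal}; a subsidiary point is verifying that the tangent--normal block is genuinely subleading, so that Lemma \ref{lem:matrix} applies verbatim.
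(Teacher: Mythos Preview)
Your proposal is correct and follows essentially the same route as the paper's own proof: work in the graph chart, reduce the domain to an ellipsoid (generic $\VV$) or the exact ball $\|\bx\|\le\vep$ ($\VV=T_p\cM$), expand $X_aX_b\sqrt{\det g}$, evaluate the monomial moments via the spherical integrals of the appendix, recognise the resulting contractions as the components \eqref{eq:trNcomp}, \eqref{eq:trTcomp}, \eqref{eq:trTotal}, and invoke Lemma~\ref{lem:matrix} for the eigenvalue decomposition. Your normalisation by $V_n(\vep)$ before applying the lemma is a cosmetic difference only; the paper carries the factor $V_n(\vep)$ through and applies the lemma to the bracketed expression directly.
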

\begin{proof}
For generic $\VV$ the manifold is again approximated by its tangent space as $\bs{X}=[\bx,\bze]^T$, which produces no contribution to the normal block at leading order $\cO(\vep^{n+2})$. Choosing the tangent orthonormal basis to be aligned with the principal axis of the ellipsoid, and changing variables so that $x^\mu=y^\mu\ell_\mu$, the tangent block becomes an integration over a ball:
\begin{align*}
	[C(\Cyll(\vep,\VV))]^{\mu\nu} & = \int_{\bx^TA\cdot A^T\bx\leq\vep^2} x^\mu x^\nu\, d^n\bx = \int_{\sum_\mu y_\mu^2\leq 1} y^\mu y^\nu \ell_\mu\ell_\nu\,\prod^n_{\al=1}\ell_\al\; d^n\by \\
	& = \del_{\mu\nu}\frac{\vep^{n+2}}{n+2}\ell_\mu\ell_\nu V_n(1)\prod^n_{\al=1}\ell_\al + \cO(\vep^{n+3}).
\end{align*}
Thus, the covariance matrix leading term is proportional to $\diag(\ell^2_1,\dots,\ell^2_n,0,\dots,0)$, which has limit eigenvectors corresponding to the first $n$ eigenvalues spanning $T_p\cM$, and the other $k$ eigenvectors spanning $N_p\cM$, by an straightforward extension to lemma \ref{lem:matrix} at order $\vep^{2}$.

For $\VV=T_p\cM$, we shall compute the integrals of the matrix blocks $[x^\mu x^\nu]_{\mu,\nu=1}^n$, and $[f^if^j]_{i,\, j=1}^k$, so the next-to-leading order elements of those blocks will suffice to obtain the eigenvalues and limit eigenvectors by the results of the previous lemma. The tangent block is:
\begin{align*}
[C(\Cyll(\vep))]^{\mu\nu} & = \int_{B^{(n)}(\vep)}x^\mu x^\nu\sqrt{\det g(\bx)}\, d^n\bx \\ 
& = \int_{\SS^{n-1}}d\,\SS\int^\vep_0\rho^{n+1}\cx^\mu\cx^\nu\left( 1+\frac{1}{2}\sum^k_{i=1}\sum^n_{\al=1}\left[\sum^n_{\bet=1}\ka^j_{\al\bet}\rho\,\cx^\bet \right]^2 +\cO(x^3) \right)d\rho \\
& = \frac{\vep^{n+2}}{n+2}\int_{\SS^{n-1}}\cx^\mu\cx^\nu d\,\SS +\frac{\vep^{n+4}}{2(n+4)}\sum_{i=1}^k\sum_{\al=1}^n\sum_{\bet,\ga}^n\ka^i_{\al\bet}\ka^i_{\al\ga}\int_{\SS^{n-1}}\cx^\mu\cx^\nu\cx^\bet\cx^\ga d\,\SS  +\cO(\vep^{n+6}),
\end{align*}
and the last integral is only nonzero for the following combination of indices using the notation in the appendix
\begin{equation}\label{eq:contr4}
\int_{\SS^{n-1}}\cx^\mu\cx^\nu\cx^\bet\cx^\ga \; d\,\SS = 
C_4\contraction{(}{\mu}{}{\nu}\contraction{(}{\mu}{\nu}{\bet}\contraction{(}{\mu}{\nu\bet}{\ga}(\mu\nu\bet\ga) +
C_{22}\left[
\contraction{(}{\mu}{}{\nu}\bcontraction{(\mu\mu}{\bet}{}{\ga}(\mu\nu\bet\ga) + 
\contraction{(}{\mu}{\nu}{\ga}\bcontraction{(\mu}{\nu}{\bet}{\ga}(\mu\nu\bet\ga) + 
\contraction{(}{\mu}{\nu\bet}{\ga}\bcontraction{(\mu}{\nu}{}{\bet}(\mu\nu\bet\ga)
\right].
\end{equation}
This simplifies the sums using the relationship between $C_4, C_{22}$ and $C_2$, and writing $(1-\del_{\mu\nu})$ to enforce $\mu\neq\nu$ in the last two terms of $C_{22}$:
\begin{align*}
& \frac{\del_{\mu\nu} C_2\vep^{n+2}}{n+2}+ \frac{C_2\vep^{n+4}}{2(n+2)(n+4)}\sum_{i=1}^k\left[ 3\del_{\mu\nu}\sum_{\al=1}^n (\ka^i_{\al\mu})^2 + \del_{\mu\nu}\sum_{\substack{\al,\,\bet \\ \bet\neq \mu}}^n (\ka^i_{\al\bet})^2 + 2(1-\del_{\mu\nu})\sum_{\al=1}^n\ka^i_{\al\mu}\ka^i_{\al\nu} \right]+... \\
& = V_n(\vep)\frac{\vep^2}{n+2}\del_{\mu\nu}+ \frac{V_n(\vep)\vep^4}{2(n+2)(n+4)}\left[ \del_{\mu\nu}\sum_{i=1}^k\sum_{\al,\bet}^n(\ka^i_{\al\bet})^2 +2\sum_{i=1}^k\sum_{\al=1}^n\ka^i_{\al\mu}\ka^i_{\al\nu} \right]+\cO(\vep^{n+6}) \\
& = \frac{V_n(\vep)\vep^2}{n+2}\del_{\mu\nu}+ \frac{V_n(\vep)\vep^4}{2(n+2)(n+4)}\left( \del_{\mu\nu}\sum_{\al,\,\bet}^n\|\II(\bE_\al,\bE_\bet)\|^2 +2\sum_{\al=1}^n\metric{\II(\bE_\al,\bE_\mu)}{\II(\bE_\al,\bE_\nu)} \right)+\cO(\vep^{n+6}).
\end{align*}
The component expression of equations \ref{eq:trNcomp} and \ref{eq:trTotal} identify this block matrix at order $\cO(\vep^{n+4})$ as the matrix elements of the operator $[(\tr_\parallel\tr_\perp\III)\Id_n + 2\tr_\perp\III]$ in our chosen orthonormal basis, whose eigenvalues are then by lemma \ref{lem:matrix} the next-to-leading order contribution to the first $n$ eigenvalues of $C(\Cyll(\vep))$, and whose eigenvectors are the limit eigenvectors of $C(\Cyll(\vep))$.

We perform now the integration of the normal block, which truncated to leading order yields:
\begin{align*}
[C(\Cyll(\vep))]^{ij} =\int_{B^{(n)}(\vep)}f^i(\bx)f^j(\bx) d^n\bx +... =  \int_{\SS^{n-1}}d\,\SS\int^\vep_0 \frac{\rho^{n+3}}{4}d\rho\sum^n_{\al,\bet}\sum^n_{\ga,\,\del}\ka^i_{\al\bet}\ka^j_{\ga\del}\cx^\al\cx^\bet\cx^\ga\cx^\del + \cO(\vep^{n+6})
\end{align*}
where the angular integral is only nonzero in the same cases as in equation \ref{eq:contr4} above, but with the indices relabeled accordingly. This again simplifies every summation by matching the combination of indices and using the relations among the constants:
\begin{align*}
& [C(\Cyll(\vep))]^{ij} = \frac{\vep^{n+4}}{4(n+4)}\left[ C_4\sum_{\al=1}^n\ka^i_{\al\al}\ka^j_{\al\al} +C_{22}\left( \sum^n_{\substack{\al,\,\ga \\ \al\neq\ga}}\ka^i_{\al\al}\ka^j_{\ga\ga} + 2\sum^n_{\substack{\al,\,\bet \\ \al\neq\bet}}\ka^i_{\al\bet}\ka^j_{\al\bet} \right)\right] + \cO(\vep^{n+6}) \\
& = \frac{C_2\,\vep^{n+4}}{4(n+2)(n+4)}\left[ 3\sum_{\al=1}^n \mr{II}^i(\bE_\al,\bE_\al)\mr{II}^j(\bE_\al,\bE_\al) + \sum^n_{\substack{\al,\,\ga \\ \al\neq\ga}}\mr{II}^i(\bE_\al,\bE_\al)\mr{II}^j(\bE_\ga,\bE_\ga) +\right. \\ 
& \qquad\qquad\qquad\qquad\qquad\qquad\qquad\qquad\qquad\qquad\qquad \left. + 2\sum^n_{\substack{\al,\,\bet \\ \al\neq\bet}}\mr{II}^i(\bE_\al,\bE_\bet)\mr{II}^j(\bE_\al,\bE_\bet)  \right] + \cO(\vep^{n+6}),
\end{align*}
in which the first sum precisely completes the elements missing from the other two
$$
= \frac{V_n(\vep)\vep^2}{4(n+2)(n+4)}\left[ \left(\sum_{\al=1}^n\mr{II}^i(\bE_\al,\bE_\al)\right)\left(\sum_{\ga=1}^n\mr{II}^j(\bE_\ga,\bE_\ga)\right) + 2\sum_{\al,\bet}^n\mr{II}^i(\bE_\al,\bE_\bet)\mr{II}^j(\bE_\al,\bE_\bet) \right] + \cO(\vep^{n+6}).
$$
In this last expression we clearly identify the components $[\bH\otimes\bH]^{ij}$, and those of $2\,\tr_\parallel\III$ using the definition of $\bH$ and equation \ref{eq:trTcomp}.
\end{proof}

We shall see below that the spherical covariance matrix has the same normal eigenvalues, to leading order, as the cylindrical case above. In \cite{solis1993,solis2000} these were expressed as an average of the squares of the curvatures of curves inside the manifold $\cM$. Therefore, our previous computation provides an explicit formula for this interpretation of the normal eigenvalues.

\begin{corollary}
	Let $\cM$ be an $n$-dimensional submanifold of Euclidean space $\RR^{n+k}$, then the first generalized curvatures $\ka(\ga,\bx,\bN_j)$ of curves $\ga\subset\cM$ passing through $p$ with tangent vector $\bx$ and principal normal vectors any of the eigenvectors $\bN_j,\; j=1,\dots,k$, of $[\,\bH\otimes\bH + 2\,\tr_\parallel\III\,]$, integrate to:
\begin{equation}
	\frac{1}{V_n(\vep)}\int_{B^{(n)}(\vep)}\ka^2(\ga,\bx,\bN_j)\, d^n\bx = \frac{\vep^4}{(n+2)(n+4)}  \lbd_j[\,\bH\otimes\bH + 2\,\tr_\parallel\III\,].
\end{equation}
In particular:
	\begin{equation}
		\sum_{j=1}^k\frac{1}{V_n(\vep)}\int_{B^{(n)}(\vep)}\ka^2(\ga,\bx,\bN_j)\, d^n\bx = \frac{3\|\bH\|^2-2\cR}{(n+2)(n+4)}\vep^4 .
	\end{equation}
\end{corollary}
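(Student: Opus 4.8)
The plan is to recognize the integrand as the square of a single quadratic form on $T_p\cM$ and to evaluate the resulting integral over $B^{(n)}(\vep)$ with exactly the spherical moment identities that already appear in the proof of Theorem~\ref{MainTh}, so that the corollary comes out as a reading of the normal block computed there.

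First I would take the graph representation of $\cM$ near $p$ as in the lemma producing equation~\ref{eq:IIlocal}, choosing --- as that lemma permits --- the orthonormal normal frame $\{\bN_j\}_{j=1}^k$ to be the eigenvectors of $\bH\otimes\bH+2\,\tr_\parallel\III\in\End(N_p\cM)$. In these coordinates $\ka^j_{\al\bet}=\metric{\oS_j\,\bE_\al}{\bE_\bet}=\mr{II}^j(\bE_\al,\bE_\bet)$. Since $\II(\bx,\bx)$ is the ambient acceleration at $p$ of the $\cM$-geodesic with velocity $\bx$ (the discussion after equation~\ref{eq:IIlocal}, cf.~\cite[Ch.~4]{oneil1983}), the curvature of such a curve against $\bN_j$ --- the component of its acceleration along $\bN_j$ --- is the homogeneous quadratic form $\ka(\ga,\bx,\bN_j)=\metric{\II(\bx,\bx)}{\bN_j}=\metric{\oS_j\,\bx}{\bx}=\sum_{\al,\bet=1}^n\ka^j_{\al\bet}\,x^\al x^\bet$, so $\ka^2$ is a homogeneous quartic in $\bx$. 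It is this convention, matching \cite{solis2000}, that makes the asserted identities exact rather than merely leading order in $\vep$; with the Frenet-normalized curvature the integral would be $\vep$-independent.

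Next I would compute $\int_{B^{(n)}(\vep)}\ka^2\,d^n\bx$ directly in polar coordinates $\bx=\rho\,\cx$. The radial factor is $\int_0^\vep\rho^{n+3}\,d\rho=\vep^{n+4}/(n+4)$, and the angular factor is the fourth moment on $\SS^{n-1}$, which by the relations $C_4=3C_{22}$, $C_{22}=C_2/(n+2)$, $C_2=V_n(1)$ of the appendix equals
$$\int_{\SS^{n-1}}\Bigl(\sum_{\al,\bet}\ka^j_{\al\bet}\cx^\al\cx^\bet\Bigr)^{2}d\SS=\frac{C_2}{n+2}\Bigl[(\tr\oS_j)^2+2\,\tr(\oS_j^{2})\Bigr]=\frac{C_2}{n+2}\Bigl[(H^j)^2+2\sum_{\al,\bet=1}^n(\ka^j_{\al\bet})^2\Bigr].$$
This is exactly the $i=j$ instance of the angular sum already reduced in the proof of Theorem~\ref{MainTh} for the normal block $[C(\Cyll(\vep))]^{ij}$. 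Multiplying the two factors and dividing by $V_n(\vep)=C_2\,\vep^n$ gives $\frac{1}{V_n(\vep)}\int_{B^{(n)}(\vep)}\ka^2\,d^n\bx=\frac{\vep^4}{(n+2)(n+4)}[(H^j)^2+2\,\tr(\oS_j^{2})]$. By Lemma~\ref{lem:trnIII} the bracket is the $(j,j)$ entry of $\bH\otimes\bH+2\,\tr_\parallel\III$ in the chosen normal frame, and that frame being its eigenbasis, this entry is $\lbd_j[\bH\otimes\bH+2\,\tr_\parallel\III]$; this is the first identity.

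Finally, for the summed formula I would add over $j=1,\dots,k$. Here $\sum_j\ka^2(\ga,\bx,\bN_j)=\sum_j\metric{\II(\bx,\bx)}{\bN_j}^2=\|\II(\bx,\bx)\|^2$ is frame-independent, and $\sum_j[(H^j)^2+2\,\tr(\oS_j^{2})]=\|\bH\|^2+2\,\tr\III$ by Lemma~\ref{lem:trnIII}; substituting $\tr\III=\|\bH\|^2-\cR$ (since $\bar{\cR}=0$ for $\mc{N}=\RR^{n+k}$ in equation~\ref{eq:trIII}) gives $\|\bH\|^2+2(\|\bH\|^2-\cR)=3\|\bH\|^2-2\cR$, the stated identity. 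The closest thing to an obstacle is the conventional one flagged above --- reading $\ka(\ga,\bx,\bN_j)$ as the unnormalized normal-curvature form $\metric{\oS_{\bN_j}\bx}{\bx}$ in the sense of \cite{solis2000} --- since once that is granted the computation is just the $i=j$ normal block of Theorem~\ref{MainTh} and presents no further difficulty.
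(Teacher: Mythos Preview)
Your proposal is correct and follows exactly the route the paper intends: the corollary is stated without its own proof precisely because it is meant to be read off from the normal block $[C(\Cyll(\vep))]^{jj}$ computed in the proof of Theorem~\ref{MainTh}, and you make this identification explicit, including the factor-of-four bookkeeping between $(f^j)^2$ and $\ka^2$. Your remark that the identity is exact only under the unnormalized convention $\ka(\ga,\bx,\bN_j)=\metric{\II(\bx,\bx)}{\bN_j}$ of \cite{solis2000} is well taken and is the one nontrivial point the paper leaves implicit.
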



\section{Spherical Covariance Analysis}\label{sec:SphCov}

The difference between the cylindrical and spherical intersection domains for a graph manifold lies in the irregular projection onto the tangent space: by definition the cylinder is the extension in the normal directions of the ball $B^{(n)}_p(\vep)\subset T_p\cM$, so the points of the graph manifold satisfy $\|\projj{[\bx,\bs{f}(\bx)]^T}{T_p\cM}\|=\|\bx\| \leq\vep$, and thus the integration region is a perfect ball. However, in the spherical case the domain of integration is $\|\bx\|^2+\|\bs{f}(\bx)\|^2 \leq \vep^2$, which is nontrivial and in general cannot be parametrized exactly. One can nevertheless apply the same procedure as done originally in \cite{pottmann2007} and \cite{alvarez2018a} to find the leading order corrections to the ball domain.

\begin{lemma}\label{lem:boundary}
	For $\vep>0$ small enough so that $\cM$ is a graph manifold over $T_p\cM$, using cylindrical coordinates, the radial parametric equation of a point $\bs{X}=[\rho\cx^1,\dots,\rho\cx^n, f^1(\rho\bs{\cx}),\dots,f^k(\rho\bs{\cx})]^T$ in $\dd D_p(\vep)=\cM\cap \SS^{n}_p(\vep)$, is
\begin{equation}
	r(\bs{\cx}):=\rho(\cx_1,\dots,\cx_n) = \vep -\frac{K(\bs{\cx})^2}{8}\vep^3 +\cO(\vep^4) ,
\end{equation}
	where $\bs{\cx}\in\SS^{n-1}\subset T_p\cM$, and
\begin{equation}\label{eq:geoAcc}
	K(\bs{\cx})^2 :=\|\II(\bs{\cx},\bs{\cx}) \|^2=\sum_{i=1}^k\sum_{\al,\bet}^n\sum_{\ga,\del}^n\;\ka^i_{\al\bet}\ka^i_{\ga\del}\,\cx^\al\cx^\bet\cx^\ga\cx^\del
\end{equation}
is the square of the ambient space acceleration of a geodesic curve of $\cM$ with tangent $\bs{\cx}$ at $p$.
\end{lemma}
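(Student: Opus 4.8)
The plan is to realize $\dd D_p(\vep)$ as the set of points whose graph coordinates lie on the Euclidean sphere of radius $\vep$ centered at $p$, reduce the intersection condition to a single scalar equation for the radial variable $\rho$, and invert that equation as a power series in $\vep$.

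First I would parametrize a point of $\dd D_p(\vep) = \cM\cap\SS^n_p(\vep)$ near $p$ in cylindrical coordinates on $T_p\cM$ as $\bs{X} = [\rho\cx^1,\dots,\rho\cx^n,\,f^1(\rho\bs{\cx}),\dots,f^k(\rho\bs{\cx})]^T$ with $\bs{\cx}\in\SS^{n-1}\subset T_p\cM$, and impose $\|\bs{X}\|^2 = \vep^2$. Because $\sum_\mu(\cx^\mu)^2 = 1$, the tangential part contributes exactly $\rho^2$ and the condition collapses to
\[
G(\rho,\vep) := \rho^2 + \sum_{j=1}^k f^j(\rho\bs{\cx})^2 - \vep^2 = 0 .
\]
By the graph hypothesis $\nabla f^j(\bze) = \bze$, each $f^j(\rho\bs{\cx})$ vanishes to order $\rho^2$ with leading coefficient the Hessian quadratic form $\tfrac12\sum_{\al,\bet}\ka^j_{\al\bet}\cx^\al\cx^\bet$, whose entries are exactly the second-fundamental-form components of \eqref{eq:IIlocal}. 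Squaring and summing over $j$ therefore gives $\sum_j f^j(\rho\bs{\cx})^2 = \tfrac{\rho^4}{4}K(\bs{\cx})^2 + \cO(\rho^5)$ with $K(\bs{\cx})^2 = \sum_j\bigl(\sum_{\al,\bet}\ka^j_{\al\bet}\cx^\al\cx^\bet\bigr)^2 = \|\II(\bs{\cx},\bs{\cx})\|^2$, which is \eqref{eq:geoAcc}; by the O'Neill interpretation of $\II$ recalled above, this is the squared ambient acceleration at $p$ of the $\cM$-geodesic with initial velocity $\bs{\cx}$.

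Next I would invert $G(\rho,\vep) = 0$ for $\rho$. Since $\dd_\rho G = 2\rho + \cO(\rho^3)$ is nonzero on a punctured neighborhood of $\rho = 0$ and $G$ is smooth with $G(\vep,\vep) = \cO(\vep^4)$, the implicit function theorem produces a unique smooth branch $\rho = r(\bs{\cx},\vep)$ with $r(\bs{\cx},0) = 0$; to compute it I would substitute the ansatz $\rho = \vep + c_2\vep^2 + c_3\vep^3 + \cO(\vep^4)$ into $G = 0$ and match powers of $\vep$. The coefficients of $\vep^2$ and $\vep^3$ force $c_2 = 0$ — below order $\vep^4$ the only term in $G$ that contributes is $\rho^2$ itself — while the $\vep^4$ coefficient gives $2c_3 + \tfrac14 K(\bs{\cx})^2 = 0$, so $c_3 = -K(\bs{\cx})^2/8$. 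This yields precisely $r(\bs{\cx}) = \vep - \tfrac{K(\bs{\cx})^2}{8}\vep^3 + \cO(\vep^4)$.

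The one delicate point — the main, if mild, obstacle — is uniformity of the expansion in the angular variable $\bs{\cx}$: the Taylor remainders of the $f^j$, and hence the $\cO(\vep^4)$ error in $r(\bs{\cx},\vep)$, depend on $\bs{\cx}$, and this remainder must be uniform since $r(\bs{\cx},\vep)$ will later be integrated over $\SS^{n-1}$. This follows from smoothness of the $f^j$ together with compactness of $\SS^{n-1}$: all relevant derivative bounds are uniform in $\bs{\cx}$, so the implicit function theorem — equivalently, the power-series inversion with Lagrange-type remainder estimates — applies with constants independent of $\bs{\cx}$. It is also worth noting that the branch selected is the one that stays positive and close to $\vep$, i.e. the sheet of $\cM\cap\SS^n_p(\vep)$ that bounds $D_p(\vep)$ near $p$.
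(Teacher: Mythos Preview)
Your proposal is correct and follows essentially the same route as the paper: both set up the scalar equation $\rho^2+\sum_j f^j(\rho\bs{\cx})^2=\vep^2$, expand the normal part as $\tfrac{\rho^4}{4}K(\bs{\cx})^2+\cO(\rho^5)$, and invert asymptotically. The only cosmetic difference is that the paper solves the resulting equation as a quadratic in $\rho^2$ via the quadratic formula and then expands, whereas you substitute a power-series ansatz in $\vep$ and match coefficients; your added remark on uniformity in $\bs{\cx}$ is a welcome refinement the paper leaves implicit.
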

\begin{proof}
A point of the spherical boundary satisfies $\|\bx\|^2+\sum_{i=1}^k (f^i(\bx))^2=\vep^2$. Since $\|\bx\|^2=\rho^2$, and $f^i(\bx)=\frac{1}{2}\sum^n_{\al,\bet}\ka^i_{\al\bet}x^\al x^\bet+\cO(x^3)$, it is immediate that
 $$
 \rho^2+\frac{1}{4}\rho^4\sum_{i=1}^k\left(\sum^n_{\al,\bet}\ka^i_{\al\bet}\cx^\al\cx^\bet\right)^2 -\vep^2 = \cO(\rho^5).
 $$
Defining $K(\bs{\cx})^2$ as the coefficient of $\frac{\rho^4}{4}$, we can solve the equation to order four to get
	$$
	\rho^2 = \frac{2}{K(\bs{\cx})^2}\left(-1 +\sqrt{1+K(\bs{\cx})^2\vep^2}\right) = \vep^2 -\frac{1}{4}K(\bs{\cx})^2\vep^4+\cO(\vep^6),
	$$
whose square root yields the result. Note that the actual error may be of order four because this could contribute at order fie upon squaring the expression, which is the order neglected in the original equation.
In our chosen orthonormal basis at $p$, we have that $\II(\bs{\cx},\bs{\cx})= \sum_{i=1}^k\left(\sum^n_{\al,\bet}\ka^i_{\al\bet}\cx^\al \cx^\bet\right)\bN_i$, and this is precisely the ambient space acceleration of a geodesic of $\cM$, cf. \cite[ch. 4, Cor. 10]{oneil1983}.
\end{proof}

\begin{proposition}
	The $n$-dimensional volume of the spherical component is
\begin{equation}
	V(D_p(\vep)) = V_n(\vep)\left[ 1+\frac{\vep^2}{8(n+2)}\;(2\,\tr\III-\|\bH\|^2) +\cO(\vep^3) \right]
\end{equation}
where $2\,\tr\III-\|\bH\|^2=\|\bH\|^2-2\cR$.
\end{proposition}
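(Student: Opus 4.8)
The plan is to compute $V(D_p(\vep))$ in the cylindrical coordinates $(\rho,\bs{\cx})$ of Lemma~\ref{lem:boundary}, integrating the volume form $\sqrt{\det g}\,\rho^{n-1}\,d\rho\,d\SS$, with $\sqrt{\det g}$ expanded as in the first lemma of Section~\ref{sec:cylCov}, over the region $\{0\le\rho\le r(\bs{\cx})\}$ where $r(\bs{\cx})=\vep-\tfrac18 K(\bs{\cx})^2\vep^3+\cO(\vep^4)$. So I would write
$$
V(D_p(\vep))=\int_{\SS^{n-1}}d\SS\int_0^{r(\bs{\cx})}\rho^{n-1}\Bigl(1+\tfrac{\rho^2}{2}Q(\bs{\cx})+\cO(\rho^3)\Bigr)\,d\rho,
$$
where $Q(\bs{\cx}):=\sum_{i=1}^k\sum_{\al=1}^n\bigl(\sum_{\bet=1}^n\ka^i_{\al\bet}\cx^\bet\bigr)^2$, first carry out the radial integral exactly in the variable $r$, obtaining $\tfrac1n r^n+\tfrac{Q}{2(n+2)}r^{n+2}+\cO(r^{n+3})$, and only then substitute $r(\bs{\cx})$. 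The crucial point is that the boundary deformation enters the answer only through the leading power: since $r^n=\vep^n\bigl(1-\tfrac n8 K^2\vep^2+\cO(\vep^3)\bigr)$ while $r^{n+2}=\vep^{n+2}+\cO(\vep^{n+3})$, the radial integral collapses to $\tfrac{\vep^n}{n}-\tfrac18 K(\bs{\cx})^2\vep^{n+2}+\tfrac{Q(\bs{\cx})}{2(n+2)}\vep^{n+2}+\cO(\vep^{n+3})$.

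Integrating over $\SS^{n-1}$ turns the leading piece into $V_n(\vep)$ and reduces the two order-$\vep^{n+2}$ contributions to two spherical moment integrals. The integral of $Q$ is the quadratic moment already used in Theorem~\ref{th:volCyl}: with $\int_{\SS^{n-1}}\cx^\bet\cx^\ga\,d\SS=C_2\del_{\bet\ga}$ one gets $\int_{\SS^{n-1}}Q\,d\SS=C_2\sum_{i,\al,\bet}(\ka^i_{\al\bet})^2=C_2\,\tr\III$ by equation \ref{eq:trTotal}. The integral of $K^2=\sum_i\sum_{\al\bet\ga\del}\ka^i_{\al\bet}\ka^i_{\ga\del}\cx^\al\cx^\bet\cx^\ga\cx^\del$ is the quartic moment \ref{eq:contr4}: contracting against the three Wick pairings, the pairing $\del_{\al\bet}\del_{\ga\del}$ yields $\sum_i(H^i)^2=\|\bH\|^2$, while the two pairings $\del_{\al\ga}\del_{\bet\del}$ and $\del_{\al\del}\del_{\bet\ga}$ each yield $\sum_{i,\al,\bet}(\ka^i_{\al\bet})^2=\tr\III$, being equal by the symmetry $\ka^i_{\al\bet}=\ka^i_{\bet\al}$; the "all equal" $C_4$-terms fill in exactly the diagonal entries, and using $C_4=3C_{22}$, $C_{22}=C_2/(n+2)$ one finds $\int_{\SS^{n-1}}K^2\,d\SS=\tfrac{C_2}{n+2}\bigl(\|\bH\|^2+2\,\tr\III\bigr)$. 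Assembling the three pieces and cancelling $C_2\vep^n=V_n(\vep)$ gives
$$
V(D_p(\vep))=V_n(\vep)\Bigl[1+\frac{\vep^2}{n+2}\Bigl(\frac{\tr\III}{2}-\frac{\|\bH\|^2+2\,\tr\III}{8}\Bigr)+\cO(\vep^3)\Bigr]=V_n(\vep)\Bigl[1+\frac{\vep^2}{8(n+2)}\bigl(2\,\tr\III-\|\bH\|^2\bigr)+\cO(\vep^3)\Bigr],
$$
and the identity $\tr\III=\|\bH\|^2-\cR$ from equation \ref{eq:trIII} (with $\bar{\cR}=0$ in $\RR^{n+k}$) rewrites $2\,\tr\III-\|\bH\|^2=\|\bH\|^2-2\cR$.

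I expect the main obstacle to be the order bookkeeping rather than any hard analysis: one must verify that the only order-$\vep^{n+2}$ contributions are the boundary correction (through $r^n$) and the volume-form correction $\tfrac{\rho^2}{2}Q$, and that nothing survives at order $\vep^{n+1}$. This rests on the parity remark that odd-degree monomials in $\bs{\cx}$ integrate to zero over $\SS^{n-1}$, which also absorbs the $\cO(\rho^3)$ tail of $\sqrt{\det g}$ and the odd part of the next correction in $r(\bs{\cx})$ into $\cO(\vep^{n+3})$, i.e.\ into the stated $\cO(\vep^3)$ relative error. The second delicate point, purely combinatorial, is keeping the three Wick contractions of the quartic moment straight so that $\|\bH\|^2$ appears with coefficient $1$ and $\tr\III$ with coefficient $2$; this is exactly what pins down the constant $\tfrac1{8(n+2)}$ and forces the two $\vep^{n+2}$ terms to combine into $2\,\tr\III-\|\bH\|^2$.
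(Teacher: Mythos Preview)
Your proof is correct and follows essentially the same approach as the paper: both integrate $\sqrt{\det g}$ radially up to $r(\bs{\cx})$, expand $r^n$ to pick up the boundary correction $-\tfrac{n}{8}K^2\vep^2$, recognize the $Q$-integral as the cylindrical contribution $\tr\III$, and evaluate $\int_{\SS^{n-1}}K^2\,d\SS$ via the quartic moment to obtain $\|\bH\|^2+2\,\tr\III$. Your use of the Isserlis-type identity $\int\cx^\al\cx^\bet\cx^\ga\cx^\del\,d\SS=\tfrac{C_2}{n+2}(\del_{\al\bet}\del_{\ga\del}+\del_{\al\ga}\del_{\bet\del}+\del_{\al\del}\del_{\bet\ga})$ is a slightly cleaner packaging of the same computation the paper carries out via the explicit $C_4$/$C_{22}$ case split.
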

\begin{proof}
In contrast to the proof of the cylindrical domain, the radial integration introduces new angular corrections due to $r(\bs{\cx})$:
\begin{align*}
	V(D_p(\vep)) & = \int_{\SS^{n-1}}d\,\SS\int^{r(\bs{\cx})}_0 \rho^{n-1}\sqrt{\det g(\rho\bs{\cx})}\; d\rho \\
	& = \int_{\SS^{n-1}}\frac{r(\bs{\cx})^n}{n}d\,\SS + \int_{\SS^{n-1}}\frac{r(\bs{\cx})^{n+2}}{2(n+2)}\sum_{i=1}^k\sum_{\al,\bet,\ga}^n\ka^i_{\al\bet}\ka^i_{\al\ga}\cx^\bet\cx^\ga\;d\,\SS + \cO(\vep^{n+3}),
\end{align*}
the second integral is the same to leading order as in the cylindrical case, hence
\begin{align*}
	& = \int_{\SS^{n-1}}d\,\SS\,\frac{\vep^n}{n}\left[ 1-n\frac{K(\bs{\cx})^2}{8}\vep^2 +\cO(\vep^3)\right] + \frac{V_n(\vep)\,\vep^2}{2(n+2)}\tr\III  + \cO(\vep^{n+3}) \\
	& = V_n(\vep) -\frac{\vep^{n+2}}{8}\sum_{i=1}^k\sum^n_{\al,\bet}\sum^n_{\ga,\del}\;\ka^i_{\al\bet}\ka^i_{\ga\del}\int_{\SS^{n-1}}\cx^\al\cx^\bet\cx^\ga\cx^\del \; d\,\SS  + \frac{V_n(\vep)\,\vep^2}{2(n+2)}\tr\III  + \cO(\vep^{n+3}),
\end{align*}
where the integral is only nonzero as in equation \ref{eq:contr4}, so 
\begin{align*}
	& = V_n(\vep) -\frac{C_2\,\vep^{n+2}}{8(n+2)}\sum_{i=1}^k\left[3\sum_{a=1}^n(\ka^i_{\al\al})^2 +\sum^n_{\substack{\al,\ga \\ \al\neq\ga}}\ka^i_{\al\al}\ka^i_{\ga\ga} + 2\sum^n_{\substack{\al,\bet \\ \al\neq\bet}}(\ka^i_{\al\bet})^2 \right] + \frac{V_n(\vep)\,\vep^2}{2(n+2)}\tr\III  + \cO(\vep^{n+3}) \\
	& = V_n(\vep)\left[1 +\frac{\vep^{2}}{8(n+2)}\left(4\,\tr\III -\sum_{i=1}^k\sum^n_{\al,\ga}\ka^i_{\al\al}\ka^i_{\ga\ga} - 2\sum_{i=1}^k\sum^n_{\al,\bet}(\ka^i_{\al\bet})^2 \right) + \cO(\vep^3)\right]
\end{align*}
Now, the first set of sums in the braces is $\metric{\sum_\al\II(\bE_\al,\bE_\al)}{\sum_\ga\II(\bE_\ga,\bE_\ga)} =\|\bH\|^2$, and the second set is $\tr\III$.
\end{proof}
\begin{remark}
	Notice that it is not known the dependence of the error generated by the irregular radius $r(\bs{\cx})$, $\cO(\vep^{n+3})$ in the previous proof, and whether it cancels at that order upon spherical integration, so the spherical component invariants may have error terms at lower order than the cylindrical ones.
\end{remark}

\begin{proposition}
	The barycenter of the spherical component is to leading order the same as for the cylindrical component:
\begin{equation}
	\bb(D_p(\vep)) = [ \;\bze,\; \frac{\vep^2}{2(n+2)}\;\bH\;]^T +\cO(\vep^4).
\end{equation}
\end{proposition}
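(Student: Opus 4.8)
The plan is to mimic the barycenter computation for the cylindrical component (the proposition preceding Theorem~\ref{MainTh}), carefully tracking how the new angular corrections introduced by the irregular radius $r(\bs{\cx})$ from Lemma~\ref{lem:boundary} affect the result. Writing a point of $D_p(\vep)$ in cylindrical coordinates as $\bs{X}=[\rho\cx^1,\dots,\rho\cx^n,f^1(\rho\bs{\cx}),\dots,f^k(\rho\bs{\cx})]^T$, the unnormalized barycenter integral splits into a tangential block $\int x^\mu\,\text{dVol}$ and a normal block $\int f^j\,\text{dVol}$, each integrated radially from $0$ to $r(\bs{\cx})=\vep - \tfrac{K(\bs{\cx})^2}{8}\vep^3+\cO(\vep^4)$ and then over $\SS^{n-1}$, with weight $\rho^{n-1}\sqrt{\det g(\rho\bs{\cx})}$.

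First I would handle the tangential components. The integrand $x^\mu = \rho\,\cx^\mu$ contributes $\int_0^{r(\bs{\cx})}\rho^n\sqrt{\det g}\,d\rho = \tfrac{r(\bs{\cx})^{n+1}}{n+1}+\cO(\vep^{n+3})$ times $\cx^\mu$, and since $r(\bs{\cx})^{n+1}=\vep^{n+1}+\cO(\vep^{n+3})$, the leading angular integral is $\vep^{n+1}\int_{\SS^{n-1}}\cx^\mu\,d\SS=0$ by oddness. The first nonvanishing contribution in the tangential block therefore comes either from the $\rho^2$-term of $\sqrt{\det g}$ (producing $\int_{\SS^{n-1}}\cx^\mu \cx^\al\cx^\bet\,d\SS = 0$, again odd) or from the $\vep^3$-correction to $r(\bs{\cx})^{n+1}$ against $\int_{\SS^{n-1}}\cx^\mu K(\bs{\cx})^2\,d\SS$, which is an integral of an odd number ($5$) of $\cx$'s and hence also zero. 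So the tangential block of the unnormalized barycenter is $\cO(\vep^{n+4})$, exactly as in the cylindrical case, and after dividing by $V(D_p(\vep))=V_n(\vep)(1+\cO(\vep^2))$ it contributes $\cO(\vep^4)$ — consistent with $[\bs{0}]$ at the stated order.

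Next I would compute the normal components. The integrand is $f^j(\rho\bs{\cx}) = \tfrac{1}{2}\sum_{\al,\bet}\ka^j_{\al\bet}\rho^2\cx^\al\cx^\bet + \cO(\rho^3)$, so to leading order $\int_0^{r(\bs{\cx})} f^j\rho^{n-1}\,d\rho = \tfrac{1}{2}\sum_{\al,\bet}\ka^j_{\al\bet}\cx^\al\cx^\bet\cdot\tfrac{r(\bs{\cx})^{n+2}}{n+2}+\cO(\vep^{n+4})$. Replacing $r(\bs{\cx})^{n+2}=\vep^{n+2}+\cO(\vep^{n+4})$ and integrating over $\SS^{n-1}$ gives $\tfrac{\vep^{n+2}}{2(n+2)}\sum_{\al,\bet}\ka^j_{\al\bet}\int_{\SS^{n-1}}\cx^\al\cx^\bet\,d\SS = \tfrac{C_2\vep^{n+2}}{2(n+2)}\sum_\al\ka^j_{\al\al} = \tfrac{C_2\vep^{n+2}}{2(n+2)}H^j$, identical to the cylindrical computation. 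Dividing by $V(D_p(\vep))=C_2\vep^n(1+\cO(\vep^2))$ cancels $C_2\vep^n=V_n(\vep)$ and yields $[\bb]^j = \tfrac{\vep^2}{2(n+2)}H^j + \cO(\vep^3)$; since $\bH=\sum_j H^j\bN_j$, this is the claimed formula. (The stated error $\cO(\vep^4)$ rather than $\cO(\vep^3)$ follows because the next correction to the normal block, coming from the $\vep^3$-shift in $r(\bs{\cx})^{n+2}$ against $\int_{\SS^{n-1}}\cx^\al\cx^\bet K(\bs{\cx})^2\,d\SS$, is an even-order spherical integral and hence genuinely present at $\cO(\vep^{n+4})$, but the $\rho^3$-term of $f^j$ and the odd correction terms all vanish, leaving no $\cO(\vep^{n+3})$ contribution.)

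The main obstacle is bookkeeping rather than any conceptual difficulty: one must be sure that every would-be $\cO(\vep^{n+3})$ contribution to both blocks — the cross terms between the $r(\bs{\cx})$-correction, the $\sqrt{\det g}$-expansion, and the Taylor tail of $f^j$ — is killed by parity of the spherical integral (odd total degree in $\bs{\cx}$), so that the next-to-leading correction only enters at $\cO(\vep^{n+4})$. Given the remark following the volume proposition about the unknown dependence of the error from the irregular radius, I would state the error as $\cO(\vep^4)$ only after explicitly verifying this parity cancellation, and otherwise retreat to $\cO(\vep^3)$; the honest version of the argument makes the parity check the one step that deserves care.
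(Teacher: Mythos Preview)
Your proposal is correct and follows the same approach as the paper, which dispatches the proof in one sentence by noting that the new contributions from $r(\bs{\cx})$ to the cylindrical computation enter only at $\cO(\vep^4)$. Your version is simply a careful unpacking of that sentence: you make explicit the parity cancellations (odd total degree in $\bs{\cx}$) that kill every would-be $\cO(\vep^{n+3})$ term in both the tangential and normal blocks, which is exactly the bookkeeping the paper leaves to the reader.
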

\begin{proof}
	The new contributions from $r(\bs{\cx})$ to the cylindrical computations are at least of the same order, $\cO(\vep^4)$, as the overall error. 
\end{proof}

The covariance integral invariants for the spherical domain were obtained for hypersurfaces in \cite{alvarez2018a} by performing the computations in the basis of principal and normal directions. In arbitrary codimension, the different osculating paraboloids of $f^i(\bx), i=1,\dots, k$, cannot be diagonalized simultaneously to a common basis in general. The amount of terms and simplifications needed in this general case is of much higher complexity than for hypersurfaces but, nevertheless, an analogous result for the eigenvalue decomposition obtains.

\begin{theorem}\label{MainTh2}
	 Let $\lbd_l[\cdot]$ denote taking the $l$-th eigenvalue of a linear operator at $p$, or of its associated bilinar form with respect to the metric. Then the eigenvalues of the covariance matrix of the spherical component are:
\begin{align}
		&  \lbd_\mu (D_p(\vep)) =V_n(\vep)\left[\frac{\vep^2}{n+2}+\frac{\vep^4}{8(n+2)(n+4)}\lbd_\mu[\, (2\,\tr\III -\|\bH\|^2)\Id_n -4\oS_{\bH} \,] +\;\cO(\vep^5)\right] \\
& \lbd_j (D_p(\vep)) =V_n(\vep)\left[\frac{\vep^4}{2(n+2)(n+4)} \lbd_j[\,\tr_\parallel\III -\frac{1}{n+2}\bH\otimes\bH\,] +\cO(\vep^6)\right]
\end{align}
for all $\mu=1,\dots, n$, and $j=n+1,\dots, n+k$. Moreover, the corresponding first $n$ eigenvectors converge to the principal directions of the Weingarten operator at $\bH$, i.e., $\oS_{\bH}$, and the last $k$ eigenvectors to those of $[\tr_\parallel\III -\frac{1}{n+2}\bH\otimes\bH]$.
\end{theorem}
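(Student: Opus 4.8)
The plan is to follow the same template as in the cylindrical case (Theorem \ref{MainTh}): compute the relevant blocks of the covariance matrix $C(D_p(\vep))$ to next-to-leading order in an arbitrary orthonormal basis $\{\bE_\mu\}\cup\{\bN_j\}$ adapted to $T_p\cM\oplus N_p\cM$, recognize the order-$\vep^{n+4}$ coefficients as the matrix elements of the claimed operators, and then invoke Lemma \ref{lem:matrix} to turn those blocks into eigenvalues and limit eigenvectors. The key new input over the cylindrical case is Lemma \ref{lem:boundary}: the radial domain of integration is now $\rho\le r(\bs\cx)=\vep-\tfrac18 K(\bs\cx)^2\vep^3+\cO(\vep^4)$ rather than the constant $\vep$, and the covariance is taken with respect to the barycenter $\bb(D_p(\vep))=[\bze,\tfrac{\vep^2}{2(n+2)}\bH]^T+\cO(\vep^4)$ rather than with respect to $p$. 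So there are three sources of corrections relative to Theorem \ref{MainTh}: the $\sqrt{\det g}$ measure factor, the variable upper limit $r(\bs\cx)$, and the barycenter shift; I would organize the computation block by block, carefully tracking each.

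First I would do the tangent block $[C(D_p(\vep))]^{\mu\nu}=\int_{D_p(\vep)}(x^\mu-s^\mu)(x^\nu-s^\nu)\sqrt{\det g}\,d\mathrm{Vol}$. Since $\bb$ has vanishing tangent part to the order needed, the barycenter subtraction does not affect the tangent block until $\cO(\vep^{n+6})$, so I integrate $x^\mu x^\nu\sqrt{\det g}$ over $\rho\le r(\bs\cx)$ on each ray. Expanding $r(\bs\cx)^{n+2}=\vep^{n+2}-\tfrac{n+2}{8}K(\bs\cx)^2\vep^{n+4}+\cO(\vep^{n+5})$ and $r(\bs\cx)^{n+4}=\vep^{n+4}+\cO(\vep^{n+5})$, the order-$\vep^{n+4}$ coefficient picks up (i) the $+\tfrac12\sum_i(\sum_\bet\ka^i_{\al\bet}\rho\cx^\bet)^2$ piece of $\sqrt{\det g}$ integrated against $\cx^\mu\cx^\nu$ — this is exactly the $+2\tr_\perp\III$ and part of the $(\tr\III)\Id_n$ contribution, as in Theorem \ref{MainTh} — and (ii) a new term $-\tfrac{n+2}{8n}\int_{\SS^{n-1}}\cx^\mu\cx^\nu K(\bs\cx)^2 d\SS$ coming from the variable radius. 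Reducing the resulting degree-four and degree-six spherical integrals via \eqref{eq:contr4} and its degree-six analog, and expressing everything through $\II$ using \eqref{eq:trNcomp}, \eqref{eq:trIII}, together with the identity $\oS_{\bH}-\oR=\tr_\perp\III$ and $\sum_{\al}\metric{\II(\bE_\al,\bE_\al)}{\II(\bE_\mu,\bE_\nu)}=\metric{\oS_{\bH}\bE_\mu}{\bE_\nu}$, should collapse to the claimed $\tfrac{1}{8(n+2)(n+4)}[(2\tr\III-\|\bH\|^2)\Id_n-4\oS_{\bH}]$. The $-4\oS_{\bH}$ term is precisely the new contribution absent in the cylindrical result and traceable to the $K(\bs\cx)^2$ radius correction combined with $K(\bs\cx)^2=\metric{\II(\bs\cx,\bs\cx)}{\II(\bs\cx,\bs\cx)}$ contracting one index pair against $\bH$.

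Then I would do the normal block $[C(D_p(\vep))]^{ij}=\int_{D_p(\vep)}(f^i-s^i)(f^j-s^j)\sqrt{\det g}\,d\mathrm{Vol}$ with $s^i=\tfrac{\vep^2}{2(n+2)}H^i+\cO(\vep^4)$. Here, unlike the tangent block, the barycenter subtraction matters at leading order: expanding $(f^i-s^i)(f^j-s^j)=f^if^j-s^if^j-s^jf^i+s^is^j$ and noting $f^i(\rho\bs\cx)=\tfrac12\rho^2\sum\ka^i_{\al\bet}\cx^\al\cx^\bet+\cO(\rho^3)$, the $f^if^j$ term reproduces the cylindrical $\tfrac{1}{4(n+2)(n+4)}[\bH\otimes\bH+2\tr_\parallel\III]$ (the variable radius only shifts this at higher order since $f^if^j$ already starts at $\rho^4$), while the cross terms $-s^if^j-s^jf^i$ contribute $-\tfrac{2}{4(n+2)(n+4)}\tfrac{1}{?}\bH\otimes\bH$-type pieces and $s^is^j$ contributes $+(\tfrac{1}{2(n+2)})^2 H^iH^j$ times the ball volume; collecting the coefficients of $H^iH^j$ should yield the net $-\tfrac{1}{n+2}\bH\otimes\bH$ correction to $2\tr_\parallel\III$ (up to the overall normalization $\tfrac{1}{2(n+2)(n+4)}$ versus the cylindrical $\tfrac{1}{4(n+2)(n+4)}$, which I would reconcile by factoring the $2$). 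Finally I would feed the two blocks into Lemma \ref{lem:matrix}: the leading term is $\tfrac{\vep^{n+2}}{n+2}\mathrm{diag}(1,\dots,1,0,\dots,0)$ up to $V_n(1)$, so $a=\tfrac{1}{n+2}V_n(1)$, the $\mathrm A$-block gives the first $n$ eigenvalues and their eigenvectors converging to the principal directions of $\oS_{\bH}$ (since $(2\tr\III-\|\bH\|^2)\Id_n$ only shifts all eigenvalues equally, leaving the eigenvectors those of $\oS_{\bH}$), and the $\Gamma$-block gives the last $k$ eigenvalues with eigenvectors the principal directions of $\tr_\parallel\III-\tfrac{1}{n+2}\bH\otimes\bH$.

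The main obstacle I anticipate is bookkeeping the degree-six spherical integrals and the interplay between the three correction sources in the tangent block: one must be confident that the $r(\bs\cx)$-correction and the barycenter shift do not contaminate the tangent block's order-$\vep^{n+4}$ term beyond the clean $-4\oS_{\bH}$ piece, and that the numerous index contractions of $\ka^i_{\al\bet}\ka^i_{\ga\del}$ against fourth- and sixth-moment sphere tensors reassemble into exactly $\tr\III$, $\tr_\perp\III$, and $\oS_{\bH}$ with the stated rational coefficients. As the remark after the volume proposition warns, the error order $\cO(\vep^5)$ (rather than $\cO(\vep^6)$) in the tangent eigenvalues reflects that the $\cO(\vep^4)$ error in $r(\bs\cx)$ is not controlled, so I would state the tangent result only to the order the boundary lemma permits, exactly as in the statement.
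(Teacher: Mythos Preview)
Your proposal is correct and follows essentially the same route as the paper: compute the tangent and normal blocks to order $\vep^{n+4}$ using Lemma~\ref{lem:boundary} for the radius correction, identify the resulting sphere integrals with matrix elements of the stated operators, and invoke Lemma~\ref{lem:matrix}. The only minor streamlining in the paper is that, for the barycenter shift, it uses the identity $\int (\bs X-\bb)\otimes(\bs X-\bb)\,\mathrm{dVol}=\int \bs X\otimes\bs X\,\mathrm{dVol}-V\,\bb\otimes\bb$ directly rather than expanding $(f^i-s^i)(f^j-s^j)$ term by term, which makes the normal-block correction a one-line subtraction of $\tfrac{V_n(\vep)\vep^4}{4(n+2)^2}\bH\otimes\bH$ from the cylindrical result; your expansion of course gives the same answer. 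Your anticipation of the main difficulty---the bookkeeping of the degree-six spherical contractions of $K(\bs\cx)^2\cx^\mu\cx^\nu$ in the tangent block and their reassembly into $\oS_{\bH}$---is exactly where the bulk of the paper's proof is spent.
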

\begin{proof}
From lemma \ref{lem:matrix} again, only the tangent and normal blocks need to be computed. Now, however, the covariance matrix is taken with respect to the barycenter, so there is an extra matrix contribution from the tensor product,
$$
C(D_p(\vep)) =\int_{D_p(\vep)} \bs{X}\otimes\bs{X}\;\text{dVol} - \int_{D_p(\vep)} \bs{X}\otimes\bb\;\text{dVol},
$$
because the other two products cancel each other upon integration. From the proof of the barycenter formula, this integral is to leading order:
\begin{align*}
	\int_{D_p(\vep)} \bs{X}\otimes\bb\;\text{dVol} = V(D_p(\vep))\bb \otimes\bb = 
\left( \begin{array}{@{}c|c@{}}
	  \cO(\vep^{n+8})_{n\times n}
      & \cO(\vep^{n+6})_{n\times k} \\
   \cmidrule[0.4pt]{1-2}
   \cO(\vep^{n+6})_{k\times n} & \frac{V_n(\vep)\vep^4}{4(n+2)^2}\bH\otimes\bH \\
\end{array} \right)
\end{align*}
There is no difference in the normal block computations of this covariance matrix and the cylindrical case proved before, since the corrections coming from $r(\bs{\cx})$ are $\cO(\vep^{n+6})$. Thus, subtracting the barycenter contribution:
$$
\frac{V_n(\vep)\vep^4}{4(n+2)(n+4)}(\bH\otimes\bH + 2\,\tr_\parallel\III ) - \frac{V_n(\vep)\vep^4}{4(n+2)^2}\bH\otimes\bH = \frac{V_n(\vep)\vep^4}{2(n+2)(n+4)} (\,\tr_\parallel\III -\frac{1}{n+2}\bH\otimes\bH ).
$$

For the tangent block, the number of correction terms due to the spherical domain irregularities with respect to the cylindrical case makes a substantial contribution at $\cO(\vep^{n+4})$:
\begin{align*}
& [C(D_p(\vep))]^{\mu\nu} = \int_{\SS^{n-1}}d\,\SS\int^{r(\bs{\cx})}_0\rho^{n+1}\cx^\mu\cx^\nu\left( 1+\frac{1}{2}\sum^k_{i=1}\sum^n_{\al=1}\left[\sum^n_{\bet=1}\ka^i_{\al\bet}\rho\,\cx^\bet \right]^2 +\cO(x^3) \right)d\rho \\
& = \frac{\vep^{n+2}}{n+2}\left[ \del_{\mu\nu}C_2-(n+2)\int_{\SS^{n-1}}\cx^\mu\cx^\nu\frac{K(\bs{\cx})^2\vep^2}{8}d\,\SS +\cO(\vep^3) \right] \\
& \qquad\qquad\qquad\qquad\qquad\qquad\qquad\qquad +\frac{\vep^{n+4}}{2(n+4)}\sum_{i=1}^k\sum_{\al,\bet,\ga}^n\ka^i_{\al\bet}\ka^i_{\al\ga}\int_{\SS^{n-1}}\cx^\mu\cx^\nu\cx^\bet\cx^\ga d\,\SS  +... \\
& = \del_{\mu\nu}\frac{V_n(\vep)\vep^2}{n+2}+\frac{\vep^{n+4}}{2(n+4)}\sum_{i=1}^k\left[ \sum_{\al=1}^n\sum_{\bet,\ga}^n\ka^i_{\al\bet}\ka^i_{\al\ga}C_{(\mu\nu\bet\ga)} -\frac{n+4}{4}\sum_{\al,\bet}^n\sum_{\ga,\del}^n\ka^i_{\al\bet}\ka^i_{\ga\del}C_{(\mu\nu\al\bet\ga\del)} \right] +\cO(\vep^{n+5}),
\end{align*}
where we have made use of equation \ref{eq:geoAcc}, and written $C_{(\al\bet\dots)}$ for the integral over $\SS^{n-1}$ of the monomial product $\cx^\al\cx^\bet\dots$, (notice here the indices are not exponents but coordinate components). The first summation simplifies again with equation \ref{eq:contr4} to yield the cylindrical tangent block, but the other set of sums comprises the 31 spherical integrals of all possible monomials of degree six:
\begin{align*}
& \qquad\qquad\qquad\qquad C_{(\mu\nu\al\bet\ga\del)} = \int_{\SS^{n-1}}\cx^\mu\cx^\nu\cx^\al\cx^\bet\cx^\ga\cx^\del d\,\SS = C_6\contraction{(}{\mu}{}{\nu}\contraction{(}{\mu}{\nu}{\al}\contraction{(}{\mu}{\nu\al}{\bet}\contraction{(}{\mu}{\nu\al\bet}{\ga}\contraction{(}{\mu}{\nu\al\bet\ga}{\del}(\mu\nu\al\bet\ga\del) + \\ 
& \\
& + C_{24}\left[\;
\bcontraction{(}{\mu}{}{\nu}\contraction{(\mu\nu}{\al}{}{\bet}\contraction{(\mu\nu}{\al}{\bet}{\ga}\contraction{(\mu\nu}{\al}{\bet\ga}{\del}(\mu\nu\al\bet\ga\del) +
\bcontraction{(}{\mu}{\nu}{\al}\contraction{(\mu}{\nu}{\al}{\bet}\contraction{(\mu}{\nu}{\al\bet}{\ga}\contraction{(\mu}{\nu}{\al\bet\ga}{\del}(\mu\nu\al\bet\ga\del) + 
\bcontraction{(}{\mu}{\nu\al}{\bet}\contraction{(\mu}{\nu}{}{\al}\contraction{(\mu}{\nu}{\al\bet}{\ga}\contraction{(\mu}{\nu}{\al\bet\ga}{\del}(\mu\nu\al\bet\ga\del) + 
\bcontraction{(}{\mu}{\nu\al\bet}{\ga}\contraction{(\mu}{\nu}{}{\al}\contraction{(\mu}{\nu}{\al}{\bet}\contraction{(\mu}{\nu}{\al\bet\ga}{\del}(\mu\nu\al\bet\ga\del) + 
\bcontraction{(}{\mu}{\nu\al\bet\ga}{\del}\contraction{(\mu}{\nu}{}{\al}\contraction{(\mu}{\nu}{\al}{\bet}\contraction{(\mu}{\nu}{\al\bet}{\ga}(\mu\nu\al\bet\ga\del) +
\bcontraction{(\mu}{\nu}{}{\al}\contraction{(}{\mu}{\nu\al}{\bet}\contraction{(}{\mu}{\nu\al\bet}{\ga}\contraction{(}{\mu}{\nu\al\bet\ga}{\del}(\mu\nu\al\bet\ga\del)+
\bcontraction{(\mu}{\nu}{\al}{\bet}\contraction{(}{\mu}{\nu}{\al}\contraction{(}{\mu}{\nu\al\bet}{\ga}\contraction{(}{\mu}{\nu\al\bet\ga}{\del}(\mu\nu\al\bet\ga\del) +  \right. \\
& 
\bcontraction{(\mu}{\nu}{\al\bet}{\ga}\contraction{(}{\mu}{\nu}{\al}\contraction{(}{\mu}{\nu\al}{\bet}\contraction{(}{\mu}{\nu\al\bet\ga}{\del}(\mu\nu\al\bet\ga\del) +
\bcontraction{(\mu}{\nu}{\al\bet\ga}{\del}\contraction{(}{\mu}{\nu}{\al}\contraction{(}{\mu}{\nu\al}{\bet}\contraction{(}{\mu}{\nu\al\bet}{\ga}(\mu\nu\al\bet\ga\del) +
\bcontraction{(\mu\nu}{\al}{}{\bet}\contraction{(}{\mu}{}{\nu}\contraction{(}{\mu}{\nu\al\bet}{\ga}\contraction{(}{\mu}{\nu\al\bet\ga}{\del}(\mu\nu\al\bet\ga\del) +
\bcontraction{(\mu\nu}{\al}{\bet}{\ga}\contraction{(}{\mu}{}{\nu}\contraction{(}{\mu}{\nu\al}{\bet}\contraction{(}{\mu}{\nu\al\bet\ga}{\del}(\mu\nu\al\bet\ga\del) +
\bcontraction{(\mu\nu}{\al}{\bet\ga}{\del}\contraction{(}{\mu}{}{\nu}\contraction{(}{\mu}{\nu\al}{\bet}\contraction{(}{\mu}{\nu\al\bet}{\ga}(\mu\nu\al\bet\ga\del) +
\bcontraction{(\mu\nu\al}{\bet}{}{\ga}\contraction{(}{\mu}{}{\nu}\contraction{(}{\mu}{\nu}{\al}\contraction{(}{\mu}{\nu\al\bet\ga}{\del}(\mu\nu\al\bet\ga\del) 
+ \left.
\bcontraction{(\mu\nu\al}{\bet}{\ga}{\del}\contraction{(}{\mu}{}{\nu}\contraction{(}{\mu}{\nu}{\al}\contraction{(}{\mu}{\nu\al\bet}{\ga}(\mu\nu\al\bet\ga\del) +
\bcontraction{(\mu\nu\al\bet}{\ga}{}{\del}\contraction{(}{\mu}{}{\nu}\contraction{(}{\mu}{\nu}{\al}\contraction{(}{\mu}{\nu\al}{\bet}(\mu\nu\al\bet\ga\del)\; \right] \\
& \\
& + C_{222}\left[\; 
\bcontraction{(}{\mu}{}{\nu}\contraction{(\mu\nu}{\al}{}{\bet}\contraction{(\mu\nu\al\bet}{\ga}{}{\del}(\mu\nu\al\bet\ga\del) +
\right.
\bcontraction{(}{\mu}{}{\nu}\contraction{(\mu\nu}{\al}{\bet}{\ga}\bcontraction{(\mu\nu\al}{\bet}{\ga}{\del}(\mu\nu\al\bet\ga\del) +
\bcontraction{(}{\mu}{}{\nu}\contraction{(\mu\nu}{\al}{\bet\ga}{\del}\bcontraction{(\mu\nu\al}{\bet}{}{\ga}(\mu\nu\al\bet\ga\del) +
\bcontraction{(}{\mu}{\nu}{\al}\contraction{(\mu}{\nu}{\al}{\bet}\contraction{(\mu\nu\al\bet}{\ga}{}{\del}(\mu\nu\al\bet\ga\del) +
\bcontraction{(}{\mu}{\nu}{\al}\contraction{(\mu}{\nu}{\al\bet}{\ga}\bcontraction{(\mu\nu\al}{\bet}{\ga}{\del}(\mu\nu\al\bet\ga\del) +
\bcontraction{(}{\mu}{\nu}{\al}\contraction{(\mu}{\nu}{\al\bet\ga}{\del}\bcontraction{(\mu\nu\al}{\bet}{}{\ga}(\mu\nu\al\bet\ga\del) +
\bcontraction{(}{\mu}{\nu\al}{\bet}\contraction{(\mu}{\nu}{}{\al}\contraction{(\mu\nu\al\bet}{\ga}{}{\del}(\mu\nu\al\bet\ga\del) + \\
& 
\bcontraction{(}{\mu}{\nu\al}{\bet}\contraction{(\mu}{\nu}{\al\bet}{\ga}\contraction[2ex]{(\mu\nu}{\al}{\bet\ga}{\del}(\mu\nu\al\bet\ga\del) +
\bcontraction{(}{\mu}{\nu\al}{\bet}\contraction{(\mu}{\nu}{\al\bet\ga}{\del}\contraction[2ex]{(\mu\nu}{\al}{\bet}{\ga}(\mu\nu\al\bet\ga\del) +
\bcontraction{(}{\mu}{\nu\al\bet}{\ga}\contraction{(\mu}{\nu}{}{\al}\contraction{(\mu\nu\al}{\bet}{\ga}{\del}(\mu\nu\al\bet\ga\del) +
\bcontraction{(}{\mu}{\nu\al\bet}{\ga}\contraction{(\mu}{\nu}{\al}{\bet}\contraction[2ex]{(\mu\nu}{\al}{\bet\ga}{\del}(\mu\nu\al\bet\ga\del) +
\bcontraction{(}{\mu}{\nu\al\bet}{\ga}\contraction{(\mu}{\nu}{\al\bet\ga}{\del}\contraction[2ex]{(\mu\nu}{\al}{}{\bet}(\mu\nu\al\bet\ga\del) +
\bcontraction{(}{\mu}{\nu\al\bet\ga}{\del}\contraction{(\mu}{\nu}{}{\al}\contraction{(\mu\nu\al}{\bet}{}{\ga}(\mu\nu\al\bet\ga\del) +
\bcontraction{(}{\mu}{\nu\al\bet\ga}{\del}\contraction{(\mu}{\nu}{\al}{\bet}\contraction[2ex]{(\mu\nu}{\al}{\bet}{\ga}(\mu\nu\al\bet\ga\del) +
\left.
\bcontraction{(}{\mu}{\nu\al\bet\ga}{\del}\contraction{(\mu}{\nu}{\al\bet}{\ga}\contraction[2ex]{(\mu\nu}{\al}{}{\bet}(\mu\nu\al\bet\ga\del) \right]
\end{align*}
Each of these contractions are only nonzero when the connected indices are equal, and at the same time different from the indices of the other connected groups, for instance:
$$
\sum_{\al,\bet}^n\sum_{\ga,\del}^n\ka^i_{\al\bet}\ka^i_{\ga\bet}\bcontraction{(}{\mu}{}{\nu}\contraction{(\mu\nu}{\al}{}{\bet}\contraction{(\mu\nu\al\bet}{\ga}{}{\del}(\mu\nu\al\bet\ga\del) = 
\del_{\mu\nu}\sum^n_{\al\neq\mu}\sum^n_{\substack{\ga\neq\mu \\ \ga\neq\al}} \ka^i_{\al\al}\ka^i_{\ga\ga}.
$$
Matching all the indices in this way for each of the terms just found, and taking into account the relation of $C_6,C_{24}$ and $C_{222}$ to $C_2$ in the appendix, we take out a common factor $\frac{C_2}{4(n+2)}$, and abbreviate the sum notation to produce all the terms of order $\cO(\vep^{n+4})$:
\begin{align*}
& [C(D_p(\vep))]^{\mu\nu} =\frac{ \del_{\mu\nu}V_n(\vep)\vep^2}{n+2} + \frac{C_2\,\vep^{n+4}}{8(n+2)(n+4)} \sum_{i}\left[4\del_{\mu\nu}\sum_{\substack{\al,\,\bet \\ \bet\neq\mu}}(\ka^i_{\al\bet})^2 + 8\cancel{\del}_{\mu\nu}\sum_{\al}\ka^i_{\al\mu}\ka^i_{\al\nu} + 12\del_{\mu\nu}\sum_{\al}(\ka^i_{\al\nu})^2  \right. \\
& -15\del_{\mu\nu}(\ka^i_{\nu\nu})^2 -3\left\{\,\del_{\mu\nu}\sum_{\al\neq\mu}(\ka^i_{\al\al})^2 +\cancel{\del}_{\mu\nu}( \ka^i_{\mu\nu}\ka^i_{\nu\nu} + \ka^i_{\nu\mu}\ka_{\nu\nu}+\ka^i_{\nu\nu}\ka^i_{\mu\nu} + \ka^i_{\nu\nu}\ka^i_{\nu\mu} + \ka^i_{\nu\mu}\ka^i_{\mu\mu} + \ka^i_{\mu\nu}\ka^i_{\mu\mu} \right. \\
& + \ka^i_{\mu\mu}\ka^i_{\nu\mu} + \ka^i_{\mu\mu}\ka^i_{\mu\nu} ) +\del_{\mu\nu}\left( \sum_{\al\neq\mu}\ka^i_{\al\al}\ka^i_{\nu\nu} + \sum_{\al\neq\mu}(\ka^i_{\al\nu})^2 +\sum_{\al\neq\mu}(\ka^i_{\al\nu})^2 + \sum_{\bet\neq\mu}(\ka^i_{\nu\bet})^2 + \sum_{\bet\neq\mu}(\ka^i_{\nu\bet})^2+ \right. \\
& \left.\left. \sum_{\ga\neq\mu}\ka^i_{\ga\ga}\ka^i_{\nu\nu}\right)\right\} - \del_{\mu\nu}\left( \sum_{\al\neq\mu}\sum_{\ga\neq\mu,\al}\ka^i_{\al\al}\ka^i_{\ga\ga} + \sum_{\al\neq\mu}\sum_{\bet\neq\mu,\al}(\ka^i_{\al\bet})^2 + \sum_{\al\neq\mu}\sum_{\bet\neq\mu,\al}(\ka^i_{\al\bet})^2  \right) - \cancel{\del}_{\mu\nu}\left\{ \sum_{\ga\neq\mu,\nu}\ka^i_{\mu\nu}\ka^i_{\ga\ga} \right. \\
& +\sum_{\bet\neq\mu,\nu}\ka^i_{\mu\bet}\ka^i_{\nu\bet} + \sum_{\bet\neq\mu,\nu}\ka^i_{\mu\bet}\ka^i_{\bet\nu} + \sum_{\ga\neq\mu,\nu}\ka^i_{\nu\mu}\ka^i_{\ga\ga} + \sum_{\al\neq\mu,\nu}\ka^i_{\al\mu}\ka^i_{\nu\al}+ \sum_{\al\neq\mu,\nu}\ka^i_{\al\mu}\ka^i_{\al\nu} +\sum_{\bet\neq\mu,\nu}\ka^i_{\nu\bet}\ka^i_{\mu\bet} \\
& +\left.\left. \sum_{\al\neq\mu,\nu}\ka^i_{\al\nu}\ka^i_{\mu\al} + \sum_{\al\neq\mu,\nu}\ka^i_{\al\al}\ka^i_{\mu\nu} + \sum_{\bet\neq\mu,\nu}\ka^i_{\nu\bet}\ka^i_{\bet\mu} + \sum_{\al\neq\mu,\nu}\ka^i_{\al\nu}\ka^i_{\al\mu} + \sum_{\al\neq\mu,\nu}\ka^i_{\al\al}\ka^i_{\nu\mu}\right\}\right] + \cO(\vep^{n+5})
\end{align*}
Many of the resulting summations are the same after relabeling and using $\ka^i_{\al\bet}=\ka^i_{\bet\al}$, so they can be gathered into common factors:
\begin{align*}
& [C(D_p(\vep))]^{\mu\nu} = \del_{\mu\nu}\frac{V_n(\vep)\vep^2}{n+2} + \frac{V_n(\vep)\vep^{4}}{8(n+2)(n+4)} \sum_{i}\left[ 4\del_{\mu\nu}\sum_{\al,\bet}(\ka^i_{\al\bet})^2 + 8\sum_{\al}\ka^i_{\al\mu}\ka^i_{\al\nu} - 15\del_{\mu\nu}(\ka^i_{\nu\nu})^2  \right. \\
& -3\del_{\mu\nu}\sum_{\al\neq\mu}(\ka^i_{\al\al})^2 -12(1-\del_{\mu\nu})\ka^i_{\mu\nu}(\ka^i_{\mu\mu}+\ka^i_{\nu\nu}) -6\del_{\mu\nu}\sum_{\al\neq\mu}\ka^i_{\al\al}\ka^i_{\nu\nu} -12\del_{\mu\nu}\sum_{\al\neq\mu}(\ka^i_{\al\nu})^2  \\
& \left.-\del_{\mu\nu}\sum_{\al\neq\mu}\sum_{\ga\neq\al,\mu}\ka^i_{\al\al}\ka^i_{\ga\ga} -2\del_{\mu\nu}\sum_{\al\neq\mu}\sum_{\bet\neq\al,\mu}(\ka^i_{\al\bet})^2 - (1-\del_{\mu\nu})\left( 4\ka^i_{\mu\nu}\sum_{\al\neq\mu,\nu}\ka^i_{\al\al} + 8\sum_{\al\neq\mu,\nu}\ka^i_{\al\mu}\ka^i_{\nu\al} \right)\right]
\end{align*}
for which regrouping terms and completing some sums will clarify the simplifications below,
\begin{align*}
& = \del_{\mu\nu}\frac{V_n(\vep)\vep^2}{n+2} + \frac{V_n(\vep)\vep^{4}}{8(n+2)(n+4)} \sum_{i}\left[ 8\sum_{\al}\ka^i_{\al\mu}\ka^i_{\al\nu} - 12\ka^i_{\mu\nu}(\ka^i_{\mu\mu}+\ka^i_{\nu\nu}) - 4\ka^i_{\mu\nu}\sum_{\al\neq\mu,\nu}\ka^i_{\al\al}  \right. \\
& - 8\sum_{\al\neq\mu,\nu}\ka^i_{\al\mu}\ka^i_{\nu\al} + \del_{\mu\nu}\left\{ 4\sum_{\al,\,\bet}(\ka^i_{\al\bet})^2 -3\sum_{\al\neq\mu}(\ka^i_{\al\al})^2 + 21(\ka^i_{\mu\mu})^2 -2\ka^i_{\mu\mu}\sum_{\al\neq\mu}\ka^i_{\al\al} -12\sum_{\al}(\ka^i_{\al\mu})^2 \right. \\
& \left.\left. -\sum_{\al\neq\mu}\sum_{\ga\neq\al,\mu}\ka^i_{\al\al}\ka^i_{\ga\ga} -2\sum_{\al\neq\mu}\sum_{\bet\neq\al,\mu}(\ka^i_{\al\bet})^2 + 8\sum_{\al\neq\mu}(\ka^i_{\al\mu})^2 \right\} \right] +\cO(\vep^{n+5}).
\end{align*}
Some terms inside the curly braces complement the missing elements of other summations:
$$
21(\ka^i_{\mu\mu})^2 -2\ka^i_{\mu\mu}\sum_{\al\neq\mu}\ka^i_{\al\al} -12\sum_{\al}(\ka^i_{\al\mu})^2 +8\sum_{\al\neq\mu}(\ka^i_{\al\mu})^2 = 15(\ka^i_{\mu\mu})^2 -2\ka^i_{\mu\mu}\sum_{\al}\ka^i_{\al\al}-4\sum_{\al}(\ka^i_{\al\mu})^2,
$$
and
$$
-3\sum_{\al\neq\mu}(\ka^i_{\al\al})^2 -\sum_{\al\neq\mu}\sum_{\ga\neq\al,\mu}\ka^i_{\al\al}\ka^i_{\ga\ga} -2\sum_{\al\neq\mu}\sum_{\bet\neq\al,\mu}(\ka^i_{\al\bet})^2 = -\sum_{\al,\ga\neq\mu}\ka^i_{\al\al}\ka^i_{\ga\ga} -2\sum_{\al,\bet\neq\mu}(\ka^i_{\al\bet})^2.
$$
Now, notice that this last type of double sum decomposes as follows
$$
-\sum_{\al,\ga\neq\mu}[\;\cdot\;]_{\al\ga} = -\sum_{\al,\,\ga}[\;\cdot\;]_{\al\ga} +\sum_{\substack{\ga \\ \al=\mu}}[\;\cdot\;]_{\al\ga}+\sum_{\substack{\al \\ \ga=\mu}}[\;\cdot\;]_{\al\ga} -[\;\cdot\;]_{\mu\mu},
$$
therefore, the right hand side of the previous two equations complement each other:
\begin{align*}
& [C(D_p(\vep))]^{\mu\nu} = \frac{\del_{\mu\nu}V_n(\vep)\vep^2}{n+2} + \frac{V_n(\vep)\vep^{4}}{8(n+2)(n+4)} \sum_{i}\left[ 8\sum_{\al}\ka^i_{\al\mu}\ka^i_{\al\nu} - 12\ka^i_{\mu\nu}(\ka^i_{\mu\mu}+\ka^i_{\nu\nu}) - 4\ka^i_{\mu\nu}\sum_{\al\neq\mu,\nu}\ka^i_{\al\al}  \right. \\
&\qquad \left. - 8\sum_{\al\neq\mu,\nu}\ka^i_{\al\mu}\ka^i_{\nu\al} + \del_{\mu\nu}\left\{ 4\sum_{\al,\,\bet}(\ka^i_{\al\bet})^2 + 12(\ka^i_{\mu\mu})^2 -\sum_{\al,\,\ga}\ka^i_{\al\al}\ka^i_{\ga\ga} -2\sum_{\al,\,\bet}(\ka^i_{\al\bet})^2 \right\}\right] +\cO(\vep^{n+5}).
\end{align*}
To simplify further, use $12(\ka^i_{\mu\mu})^2$ to complete the remaining sums and cancel terms:
$$
8\sum_{\al}\ka^i_{\al\mu}\ka^i_{\nu\al} - 8\ka^i_{\mu\nu}(\ka^i_{\mu\mu}+\ka^i_{\nu\nu}) -8\sum_{\al\neq\mu,\nu}\ka^i_{\al\mu}\ka^i_{\nu\al} + 8(\ka^i_{\mu\mu})^2\del_{\mu\nu} = 0,
$$
and
$$
- 4\ka^i_{\mu\nu}(\ka^i_{\mu\mu}+\ka^i_{\nu\nu}) -4\ka^i_{\mu\nu}\sum_{\al\neq\mu,\nu}\ka^i_{\al\al} + 4(\ka^i_{\mu\mu})^2\del_{\mu\nu} = -4\ka^i_{\mu\nu}\sum_{\al}\ka^i_{\al\al}. 
$$
Finally, all these computations lead us to the simple expression:
\begin{align*}
[C(D_p(\vep))]^{\mu\nu} & = \frac{\del_{\mu\nu}V_n(\vep)\vep^2}{n+2} + \frac{V_n(\vep)\vep^{4}}{8(n+2)(n+4)} \sum_{i}\left[\del_{\mu\nu}\left\{ 2\sum_{\al,\,\bet}(\ka^i_{\al\bet})^2 - (H^i)^2  \right\}-4\ka^i_{\mu\nu}H^i \right]+\cO(\vep^{n+5})
\end{align*}
where 
$$\sum_i\ka^i_{\mu\nu}H^i = \metric{\II(\bE_\mu,\bE_\nu)}{\bH}=\metric{\oS_{\bH}\,\bE_\mu}{\bE_\nu},$$
and 
$$\sum_i(2\sum_{\al,\,\bet}(\ka^i_{\al\bet})^2 - (H^i)^2)=2\,\tr\III -\|\bH\|^2,$$ 
identify the covariance tangent block to be the matrix of the Weingarten operator at the mean curvature, plus a constant, in the orthonormal basis chosen.
\end{proof}


\section{Curvature Descriptors}\label{sec:descrip}

Curvature descriptors in terms of the covariance eigenvalues were introduced in \cite{pottmann2007} for surfaces and in \cite{alvarez2018a} for hypersurfaces. A limit formula for the ratio of the eigenvalues was found for curves \cite{alvarez2017} to establish a direct relationship between the local covariance analysis of a domain containing the point $p$ and the Frenet-Serret curvature information at $p$, which in the case of curves completely determines the curve locally up to rigid motion \cite[Th. 2.13]{kuhnel2006differential}. The two main theorems of the present work generalize this type of result to general submanifolds by directly taking the limits of the covariance matrix eigenvalues.

\begin{corollary}
 Writing $\lbd_\mu(p,\vep)$ for the tangent eigenvalues of the cylindrical covariance matrix $C(\Cyll(\vep))$, they satisfy the asymptotic ratio
\begin{equation}
		\lim_{\vep\rightarrow 0}V_n(\vep)\frac{\lbd_\mu(p,\vep)-\lbd_\nu(p,\vep)}{\lbd_\mu(p,\vep)\lbd_\nu(p,\vep)} = \frac{n+2}{n+4}\left(\, \lbd_\mu[\tr_\perp\III] - \lbd_\nu[\tr_\perp\III] \,\right),
\end{equation}
	and the normal eigenvalues satisfy
\begin{equation}
		 \lim_{\vep\rightarrow 0}\frac{V_n(\vep)}{\lbd_\mu(p,\vep)\lbd_\nu(p,\vep)}\sum_{j=n+1}^{n+k}\lbd_j(p,\vep) = \frac{n+2}{4(n+4)}\left(\, \|\bH\|^2 +2\,\tr\III \, \right),
\end{equation}
	for any $\mu,\nu=1,\dots, n$. Let $\tilde{\lbd}_\mu(p,\vep)$ denote the eigenvalues in the case of the spherical domain covariance matrix, $C_p(D_p(\vep))$, then the corresponding limits are
\begin{equation}
		\lim_{\vep\rightarrow 0}V_n(\vep)\frac{\tilde{\lbd}_\mu(p,\vep)-\tilde{\lbd}_\nu(p,\vep)}{\tilde{\lbd}_\mu(p,\vep)\tilde{\lbd}_\nu(p,\vep)} = \frac{n+2}{2(n+4)}\left(\, \tilde{\lbd}_\nu[\oS_{\bH}] - \tilde{\lbd}_\mu[\oS_{\bH}] \,\right),
\end{equation}
	and
\begin{equation}
		 \lim_{\vep\rightarrow 0}\frac{V_n(\vep)}{\tilde{\lbd}_\mu(p,\vep)\tilde{\lbd}_\nu(p,\vep)}\sum_{j=n+1}^{n+k}\tilde{\lbd}_j(p,\vep) = \frac{n+2}{2(n+4)}\left(\, \tr\III -\frac{1}{n+2}\|\bH\|^2 \, \right).
\end{equation}
\end{corollary}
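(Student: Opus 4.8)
The plan is to read off all four limits directly from the eigenvalue expansions established in Theorems \ref{MainTh} and \ref{MainTh2}; no further geometry enters, and the content is purely the bookkeeping of orders in $\vep$ together with two elementary remarks. The first remark is that adding a scalar multiple of the identity to a self-adjoint operator shifts its whole spectrum by that constant, so any such term disappears from a difference $\lbd_\mu[\,\cdot\,]-\lbd_\nu[\,\cdot\,]$, and disappears from a ratio once it is divided out. The second is that the sum of all eigenvalues of an operator on $N_p\cM$ equals its trace, so Lemma \ref{lem:trnIII} gives $\tr(\bH\otimes\bH)=\|\bH\|^2$ and $\tr(\tr_\parallel\III)=\tr\III$, hence $\tr(\bH\otimes\bH+2\,\tr_\parallel\III)=\|\bH\|^2+2\,\tr\III$ and $\tr(\tr_\parallel\III-\frac{1}{n+2}\bH\otimes\bH)=\tr\III-\frac{1}{n+2}\|\bH\|^2$. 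Throughout I would write $V_n(\vep)=V_n(1)\vep^n$, so that each tangent eigenvalue in \ref{eq:tanEVDCyl} and in Theorem \ref{MainTh2} has the shape $V_n(\vep)\big[\frac{\vep^2}{n+2}+c\,\vep^4+o(\vep^4)\big]$ and each normal one in \ref{eq:norEVDCyl} and in Theorem \ref{MainTh2} the shape $V_n(\vep)\big[c\,\vep^4+o(\vep^4)\big]$, with $c$ the relevant eigenvalue (or eigenvalue-sum, i.e.\ trace) of the curvature operator over the stated numerical factor.

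I would carry out the cylindrical tangent identity as the model computation. Subtracting the two instances of \ref{eq:tanEVDCyl}, the $(\tr\III)\Id_n$ part cancels by the first remark, leaving $\lbd_\mu-\lbd_\nu=\frac{V_n(\vep)\vep^4}{(n+2)(n+4)}\big(\lbd_\mu[\tr_\perp\III]-\lbd_\nu[\tr_\perp\III]\big)+o\!\big(V_n(\vep)\vep^4\big)$, while $\lbd_\mu\lbd_\nu=\frac{V_n(\vep)^2\vep^4}{(n+2)^2}\big(1+O(\vep^2)\big)$. Dividing, multiplying by $V_n(\vep)$ and letting $\vep\to0$, the factor $V_n(1)$ and all powers of $\vep$ cancel, leaving $\frac{n+2}{n+4}\big(\lbd_\mu[\tr_\perp\III]-\lbd_\nu[\tr_\perp\III]\big)$. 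The cylindrical normal sum is the same computation with \ref{eq:norEVDCyl}: $\sum_{j}\lbd_j=\frac{V_n(\vep)\vep^4}{4(n+2)(n+4)}\tr(\bH\otimes\bH+2\,\tr_\parallel\III)+o(\cdots)$ by the second remark, and dividing by $\lbd_\mu\lbd_\nu$ and multiplying by $V_n(\vep)$ yields $\frac{(n+2)^2}{4(n+2)(n+4)}\big(\|\bH\|^2+2\,\tr\III\big)=\frac{n+2}{4(n+4)}\big(\|\bH\|^2+2\,\tr\III\big)$. The two spherical identities follow the same pattern from the expansions in Theorem \ref{MainTh2}: for the tangent block the operator is $(2\,\tr\III-\|\bH\|^2)\Id_n-4\,\oS_{\bH}$, so after the scalar part cancels the difference is $-4\big(\tilde\lbd_\mu[\oS_{\bH}]-\tilde\lbd_\nu[\oS_{\bH}]\big)$, which produces both the coefficient $\frac{n+2}{2(n+4)}$ and the reversed order of $\mu,\nu$ in the stated right-hand side; for the normal block one applies the second remark to $\tr_\parallel\III-\frac{1}{n+2}\bH\otimes\bH$ and divides as before.

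The one point needing care is the order accounting: I would confirm that the remainders stated in Theorems \ref{MainTh} and \ref{MainTh2} — $O(\vep^5)$ or $O(\vep^6)$, carried by the common factor $V_n(\vep)$ — are strictly of higher order than the $\vep^4$ correction they accompany, so that in each quotient, which has the shape $V_n(\vep)\cdot\vep^{n+4}/\vep^{2n+4}$, those remainders contribute $O(\vep)$ or $O(\vep^2)$ and vanish in the limit; this is automatic for $n\ge1$. I would also note, for clarity of the statement rather than the proof, that the $-4$ in front of $\oS_{\bH}$ reverses the spectral order, which is exactly why the third identity pairs $\tilde\lbd_\nu[\oS_{\bH}]$ with the $\mu$-side and vice versa. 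There is no genuine obstacle: all of the real work has already been done in the two main theorems, and this corollary is a direct algebraic consequence.
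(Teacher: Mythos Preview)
Your proposal is correct and is exactly the approach the paper takes: the corollary is stated without proof, as an immediate consequence of reading off the expansions in Theorems~\ref{MainTh} and~\ref{MainTh2} and doing the elementary order bookkeeping you describe. Your two remarks (that scalar multiples of the identity cancel in differences, and that eigenvalue sums equal traces computed via Lemma~\ref{lem:trnIII}) are precisely the ingredients needed, and your explicit verification of the coefficients is accurate.
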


Now we focus on smooth hypersurfaces in $\RR^{n+1}$. Theorems \ref{MainTh} and \ref{MainTh2} provide formulas to extract curvature estimators at scale from the eigenvalues of the covariance matrices. Doing this analysis on a hypersurface furnishes descriptors at scale of the principal curvatures, and the principal and normal directions. As explained in \cite{alvarez2018a}, for an embedded Riemannian manifold $\cM\subset\RR^{n+k}$, of general codimension $k$, it can always be projected down locally to $k$ hypersurfaces by choosing $k$ linearly independent orthogonal directions $\bN_j$ of its normal space, and project the points to the linear subspace $T_p\cM\oplus\langle \bN_j\rangle$. Approximations of the principal curvatures and directions of these hypersurfaces are sufficient to build an estimator of the second fundamental form of the original manifold and, by Gau{\ss} equation \ref{th:Gauss}, get in turn a descriptor of its Riemann curvature tensor.

\begin{example}
For a smooth hypersurface $\cS$, there is only one unit normal vector $\bs{n}$ at every point $p\in\cS$, up to orientation. Choosing $\{\bE_\mu\}_{\mu=1}^n$ as the orthonormal basis of the tangent space given by the principal directions at $p$, the components of the third fundamental form are:
\begin{equation}
	\metric{\III(\bE_\mu,\bE_\nu)\bN}{\bN}=\metric{\oS\,\bE_\mu}{\oS\,\bE_\nu}=\metric{\oS^2\,\bE_\mu}{\bE_\nu} = \ka^2_\mu\del_{\mu\nu} = \tr_\perp\III(\bE_\mu,\bE_\nu).
\end{equation}
The tangent trace components are
\begin{equation}
	\metric{\tr_\parallel\III\,\bN}{\bN} = \tr(\oS^2) = \sum_{\mu=1}^n \ka^2_\mu =  H^2 -\cR,
\end{equation}
that coincides with the total trace, $\tr\III = \sum_{\mu=1}^n \ka^2_\mu= H^2-\cR$.

The limit eigenvectors of either $C(\Cyll(\vep))$ or $C(D_p(\vep))$ yield a local adapted orthonormal frame $\langle\bE_1,\dots,\bE_n\rangle\oplus\langle\bN\rangle$ of $\RR^{n+1}$ that precisely singles out the tangent and normal spaces at every generic point. If the principal curvatures at $p$ are of different absolute value, this basis exactly points in the principal and normal directions. The tangent eigenvalues of the cylindrical covariance matrix $C(\Cyll(\vep))$ satisfy
\begin{equation}
	\lim_{\vep\rightarrow 0}V_n(\vep)\frac{\lbd_\mu(p,\vep)-\lbd_\nu(p,\vep)}{\lbd_\mu(p,\vep)\lbd_\nu(p,\vep)} = \frac{n+2}{n+4}(\;\ka^2_\mu(p) -\ka^2_\nu(p)\; ),
\end{equation}
	and the normal eigenvalue has
\begin{equation}
		 \lim_{\vep\rightarrow 0}V_n(\vep)\frac{\lbd_{n+1}(p,\vep)}{\lbd_\mu(p,\vep)\lbd_\nu(p,\vep)} = \frac{n+2}{4(n+4)}\left(\; 3H^2(p)-2\cR(p) \; \right),
\end{equation}
	for any $\mu,\nu=1,\dots, n$. 
For the spherical covariance matrix $C(D_p(\vep))$, the limits are
\begin{equation}
	\lim_{\vep\rightarrow 0}V_n(\vep)\frac{\tilde{\lbd}_\mu(p,\vep) -\tilde{\lbd}_\nu(p,\vep)}{\tilde{\lbd}_\mu(p,\vep)\tilde{\lbd}_\nu(p,\vep)} = \frac{n+2}{2(n+4)}[\ka_\nu(p) -\ka_\mu(p)]H(p),
\end{equation}
	and
\begin{equation}
	\lim_{\vep\rightarrow 0}V_n(\vep)\frac{\tilde{\lbd}_{n+1}(p,\vep)}{\tilde{\lbd}_\mu(p,\vep)\tilde{\lbd}_\nu(p,\vep)} = \frac{n+2}{2(n+4)}\left[\frac{n+1}{n+2}H^2(p) -\cR(p)\right].
\end{equation}
\end{example}

The known terms of the series expansion of the eigenvalue decomposition of the covariance matrices can be inverted to extract the curvature descriptors upon truncations of the series. In the spherical case, one recovers the results and descriptors already obtained in \cite{alvarez2018a}.

\begin{corollary}
	Let us write $\lbd(p,\vep)\equiv\lbd(D_p(\vep)), V_p(\vep)\equiv V(D_p(\vep))$ for the integral invariants of a spherical domain on a hypersurface $\cS$, then the corresponding curvature descriptors at scale $\vep>0$ and point $p\in\cS$, for any $\mu=1,\dots,n$, are:
\begin{align}
	\cR(D^+_p(\vep)) & = 2(n+2)^2(n+4)\frac{\lbd_{n+1}(p,\vep)}{n\,\vep^4\, V_n(\vep)} - \frac{8(n+1)(n+2)}{n\,\vep^2}\left(\frac{V_p(\vep)}{V_n(\vep)} - 1 \right) \\[3mm]
	H(D^+_p(\vep)) & = (\pm)\sqrt{ 4(n+2)^2(n+4)\frac{\lbd_{n+1}(p,\vep)}{n\,\vep^4 V_n(\vep)} +\frac{8(n+2)^2}{n\,\vep^2}\left(1-\frac{V_p(\vep)}{V_n(\vep)} \right) }, \\[3mm]
	\ka_\mu(D^+_p(\vep)) & = \frac{2(n+2)}{\vep^2 H(D^+_p(\vep))}\left[ \frac{V_p(\vep)}{V_n(\vep)}+\frac{n+4}{\vep^2}\left( \frac{\vep^2}{n+2}-\frac{\lbd_\mu(p,\vep)}{V_n(\vep)} \right) -1 \right],
\end{align}
where the overall sign can be chosen by fixing a normal orientation from $$(\pm)=\text{\emph{sgn}}\langle\,\bE_{n+1}(D_p(\vep)),\,\bb(D_p(\vep))\,\rangle .$$ The eigenvectors $\bE_\mu(D_p(\vep))$ and $\bE_{n+1}(D_p(\vep))$ are descriptors of the principal and normal directions respectively. The errors are:
\begin{align*}
	& |H^2(p)-H^2(D_p(\vep))|\leq\cO(\vep), \qquad |\cR(p)-\cR(D_p(\vep)) |\leq\cO(\vep), \qquad |\ka^2_\mu(p)-\ka^2_\mu(D_p(\vep))|\leq\cO(\vep).
\end{align*}
\end{corollary}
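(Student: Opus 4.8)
\emph{Proof strategy.} The plan is to specialize Theorem \ref{MainTh2} and the volume formula for the spherical component $D_p(\vep)$ to the hypersurface case $k=1$, where the third fundamental form reduces to scalars, and then to invert the resulting asymptotic identities algebraically and substitute the measured integral invariants for the exact series.

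First I would record the three (families of) relations. For $\cS\subset\RR^{n+1}$ there is a single Weingarten operator $\oS$ whose eigenvalues are the principal curvatures $\ka_\mu$; in the principal basis $\{\bE_\mu\}$ one has $2\,\tr\III-\|\bH\|^2=H^2-2\cR$, $\oS_{\bH}=H\oS$ with eigenvalues $H\ka_\mu$, while $\tr_\parallel\III$ and $\bH\otimes\bH$ have the single components $\tr(\oS^2)=H^2-\cR$ and $H^2$. Substituting these into the volume proposition for $D_p(\vep)$ and into Theorem \ref{MainTh2} yields, at the point $p$,
\begin{align*}
  \frac{V_p(\vep)}{V_n(\vep)}-1 &= \frac{\vep^2}{8(n+2)}\,(H^2-2\cR)+\cO(\vep^3),\\
  \frac{\lbd_{n+1}(p,\vep)}{V_n(\vep)} &= \frac{\vep^4}{2(n+2)(n+4)}\Bigl(\tfrac{n+1}{n+2}H^2-\cR\Bigr)+\cO(\vep^6),\\
  \frac{\lbd_\mu(p,\vep)}{V_n(\vep)} &= \frac{\vep^2}{n+2}+\frac{\vep^4}{8(n+2)(n+4)}\bigl(H^2-2\cR-4H\ka_\mu\bigr)+\cO(\vep^5).
\end{align*}

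Next I would solve this system. Reading the first two displays as linear equations in $H^2$ and $\cR$: after multiplying by $8(n+2)\vep^{-2}$ and $2(n+2)(n+4)\vep^{-4}$ respectively, the coefficient matrix is $\left(\begin{smallmatrix}1&-2\\ \tfrac{n+1}{n+2}&-1\end{smallmatrix}\right)$, with determinant $\tfrac{n}{n+2}\neq0$, so solving by Cramer's rule and dropping the remainders gives exactly the stated $\cR(D^+_p(\vep))$ and $H^2(D^+_p(\vep))$; the sign of $H$ is fixed by $(\pm)=\operatorname{sgn}\langle\bE_{n+1}(D_p(\vep)),\bb(D_p(\vep))\rangle$, using that the barycenter proposition places $\bb$ along $\bH$. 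For the principal curvatures I would substitute the already-recovered expression $8(n+2)\vep^{-2}(V_p/V_n-1)$ for $H^2-2\cR$ in the third display, solve the resulting linear equation for $H\ka_\mu$, divide by the measured scalar $H(D^+_p(\vep))$, and regroup constants to obtain the displayed formula for $\ka_\mu(D^+_p(\vep))$. The limit eigenvectors of $C(D_p(\vep))$ supplied by Theorem \ref{MainTh2} then serve as the normal- and principal-direction descriptors.

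Finally, the error bounds follow by tracking the neglected remainders: the relative $\cO(\vep^3)$ in the volume contributes, after multiplying by $\vep^{-2}$, an absolute $\cO(\vep)$; the $\cO(\vep^6)$ and $\cO(\vep^5)$ in the eigenvalue expansions contribute $\cO(\vep^2)$ and $\cO(\vep)$ after the rescalings by $\vep^{-4}$; hence $H^2$, $\cR$ and $H\ka_\mu$ are recovered up to $\cO(\vep)$, so $\ka_\mu$ is as well wherever $H(p)\neq 0$, and $\ka_\mu^2(p)-\ka_\mu^2(D_p(\vep))$ equals that error times a bounded factor. The main obstacle is precisely this error bookkeeping together with the genericity hypotheses: as flagged in the Remark after the spherical volume formula, the contribution of the irregular radius $r(\bs{\cx})$ is only controlled at order $\cO(\vep^{n+3})$, and it is this term — not the eigenvalue remainders — that caps the descriptor accuracy at $\cO(\vep)$; moreover the $\ka_\mu$ formula requires dividing by $H(D^+_p(\vep))$, which is legitimate only at points where $H(p)\neq 0$, otherwise one passes to a different codimension-one projection as in the discussion preceding the Example.
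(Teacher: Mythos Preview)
Your proposal is correct and follows essentially the same approach as the paper: specialize the spherical volume proposition and Theorem \ref{MainTh2} to the hypersurface case, then invert the resulting two-by-two linear system in $H^2$ and $\cR$ and solve the tangent eigenvalue identity for $H\ka_\mu$. The paper in fact does not spell out a proof for this particular corollary (it refers to \cite{alvarez2018a}), but the method you describe is exactly the one the paper uses in the proof of the companion cylindrical corollary immediately after, and your determinant $\tfrac{n}{n+2}$, your substitution of $8(n+2)\vep^{-2}(V_p/V_n-1)$ for $H^2-2\cR$, and your remarks on the $\cO(\vep)$ error coming from the $r(\bs{\cx})$ remainder all match the paper's reasoning.
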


The cylindrical domain descriptors may determine in general the squares of the principal curvatures with better truncation error than their spherical domain counterparts.

\begin{corollary}
	Denote $\lbd(p,\vep)\equiv\lbd(\Cyll(\vep)), V_p(\vep)\equiv V(\Cyll(\vep))$ the integral invariants of a cylindrical domain on a hypersurface $\cS$, then the corresponding curvature descriptors at scale $\vep>0$ and point $p\in\cS$, for any $\mu=1,\dots,n$, are:
\begin{align}
	\cR(\Cyll(\vep)) & = \frac{2(n+2)}{\vep^2}\left[ \frac{2(n+4)}{\vep^2}\frac{\lbd_{n+1}(p,\vep)}{V_n(\vep)} + 3\left( 1-\frac{V_p(\vep)}{V_n(\vep)} \right) \right] \\[3mm]
	H(\Cyll(\vep)) & = (\pm)\sqrt{ \frac{2(n+2)}{\vep^2}\left[ \frac{2(n+4)}{\vep^2}\frac{\lbd_{n+1}(p,\vep)}{V_n(\vep)} + 2\left( 1-\frac{V_p(\vep)}{V_n(\vep)} \right) \right] }, \\[3mm]
	\ka^2_\mu(\Cyll(\vep)) & = \frac{n+2}{\vep^2}\left[ \frac{n+4}{\vep^2}\left( \frac{\lbd_\mu(p,\vep)}{V_n(\vep)} -\frac{\vep^2}{n+2} \right) -\frac{V_p(\vep)}{V_n(\vep)} +1 \right],
\end{align}
where the overall sign can be chosen by fixing a normal orientation from $$(\pm)=\text{\emph{sgn}}\langle\,\bE_{n+1}(\Cyll(\vep)),\,\bb(\Cyll(\vep))\,\rangle .$$ The eigenvectors $\bE_\mu(\Cyll(\vep))$ and $\bE_{n+1}(\Cyll(\vep))$ are descriptors of the principal and normal directions respectively. The truncation errors are: 
\begin{align*}
	& |H^2(p)-H^2(\Cyll(\vep))|\leq\cO(\vep^2), \; |\cR(p)-\cR(\Cyll(\vep)) |\leq\cO(\vep^2), \; |\ka^2_\mu(p)-\ka^2_\mu(\Cyll(\vep))|\leq\cO(\vep^2).
\end{align*}
\end{corollary}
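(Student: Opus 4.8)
\emph{Proof proposal.} The plan is to specialize the already-established Theorems~\ref{th:volCyl} and~\ref{MainTh} to the hypersurface case $k=1$ and then algebraically invert the resulting scalar power series in $\vep$. When $k=1$ there is a single Weingarten operator $\oS$, whose eigenvalues $\ka_1,\dots,\ka_n$ are the principal curvatures and whose eigenvectors are the principal directions $\bE_1,\dots,\bE_n$; one has $\bH=H\bN$ with $H=\sum_\mu\ka_\mu$, $\oS_{\bH}=H\oS$, $\tr\III=H^2-\cR$, and (by equation~\ref{eq:R-Wein}) $\oS^2=H\oS-\oR=\tr_\perp\III$, while the normal-block operator $\bH\otimes\bH+2\tr_\parallel\III$ collapses to the scalar $3H^2-2\cR$. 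Substituting these into the cylindrical volume formula~\ref{eq:volCyl} and into the eigenvalue formulas~\ref{eq:tanEVDCyl} and~\ref{eq:norEVDCyl}, evaluated along the $\mu$-th principal eigenline, yields
\begin{align*}
\frac{V_p(\vep)}{V_n(\vep)} &= 1+\frac{\vep^2}{2(n+2)}\,(H^2-\cR)+\cO(\vep^4),\\
\frac{\lbd_\mu(p,\vep)}{V_n(\vep)} &= \frac{\vep^2}{n+2}+\frac{\vep^4}{2(n+2)(n+4)}\,(H^2-\cR+2\ka_\mu^2)+\cO(\vep^6),\\
\frac{\lbd_{n+1}(p,\vep)}{V_n(\vep)} &= \frac{\vep^4}{4(n+2)(n+4)}\,(3H^2-2\cR)+\cO(\vep^6).
\end{align*}
Since the common combinatorial factor $V_n(\vep)=\tfrac{\pi^{n/2}}{\Gamma(n/2+1)}\vep^n$ sits in every bracket, it cancels upon dividing, and the bracketed error terms $V_n(\vep)\cO(\vep^4)$ and $V_n(\vep)\cO(\vep^6)$ become genuine $\cO(\vep^4)$ and $\cO(\vep^6)$ after that cancellation.

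Next I would read these three relations as a small linear system. The first line gives $H^2-\cR=\tfrac{2(n+2)}{\vep^2}\bigl(\tfrac{V_p(\vep)}{V_n(\vep)}-1\bigr)+\cO(\vep^2)$ and the third gives $3H^2-2\cR=\tfrac{4(n+2)(n+4)}{\vep^4}\tfrac{\lbd_{n+1}(p,\vep)}{V_n(\vep)}+\cO(\vep^2)$; eliminating between them produces $H^2$ and $\cR$ separately, and regrouping by the factor $\tfrac{2(n+2)}{\vep^2}$ reproduces verbatim the stated formulas for $\cR(\Cyll(\vep))$ and $H(\Cyll(\vep))$ (up to the square-root sign, handled below). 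Subtracting $\tfrac12(H^2-\cR)$ from the bracket in the second line isolates $\ka_\mu^2$ and yields the stated formula for $\ka^2_\mu(\Cyll(\vep))$. Each descriptor is thus a fixed linear combination of quantities known to within $\cO(\vep^2)$, so it equals its pointwise value plus $\cO(\vep^2)$; that the dominant error does not accidentally cancel---which makes the stated bounds sharp in general---follows because the three leading correction coefficients $H^2-\cR$, $3H^2-2\cR$, $H^2-\cR+2\ka_\mu^2$ are algebraically independent functions of $(H,\cR,\ka_\mu)$.

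It remains to justify the eigenvector descriptors and fix the sign of the square root. By Lemma~\ref{lem:matrix}, as used in the proof of Theorem~\ref{MainTh}, the limit eigenvectors of $C(\Cyll(\vep))$ are the eigenvectors of the tangent block $(\tr\III)\Id_n+2\tr_\perp\III$ and of the normal block $\bH\otimes\bH+2\tr_\parallel\III$; for $k=1$ the former is the polynomial $(H^2-\cR)\Id_n+2\oS^2$ in $\oS$, hence has the principal directions as eigenvectors---this is where the generic hypothesis that the $\ka_\mu^2$ (equivalently the $|\ka_\mu|$) are pairwise distinct at $p$ enters---while the latter is a scalar with eigenvector $\bN$. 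Because $\oS$, $\oS^2$, and $(H^2-\cR)\Id_n+2\oS^2$ mutually commute, the index $\mu$ attached to $\lbd_\mu$ and to $\bE_\mu(\Cyll(\vep))$ refers consistently to the same principal line throughout. Finally, the cylindrical barycenter formula gives $\bb(\Cyll(\vep))=[\,\bze,\ \tfrac{\vep^2}{2(n+2)}H\bN\,]^T+\cO(\vep^4)$, so $\metric{\bE_{n+1}(\Cyll(\vep))}{\bb(\Cyll(\vep))}=\tfrac{\vep^2}{2(n+2)}H+\cO(\vep^4)$ has sign $\operatorname{sgn}H$ for small $\vep$; selecting $(\pm)$ this way makes $(\pm)\sqrt{H^2(\Cyll(\vep))}=H+\cO(\vep^2)$, consistent with the normal orientation $\bN$.

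I do not expect a genuine conceptual obstacle here: the content is the substitution $k=1$ into two already-proved theorems, a two-by-two elimination, and one further subtraction. The part deserving care---what I would treat as the ``hard part''---is the order bookkeeping through this nonlinear inversion: one must verify that dividing the absolute errors $\cO(\vep^{n+6})$ (eigenvalues) and $\cO(\vep^{n+4})$ (volume) by the relevant powers of $\vep$, after cancelling $V_n(\vep)$, lands precisely at $\cO(\vep^2)$ and no worse, and one must keep the genericity assumption on the principal curvatures visible so that the limit eigenvectors are honestly the principal and normal directions that the descriptors are asserted to estimate.
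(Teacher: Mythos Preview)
Your proposal is correct and follows essentially the same route as the paper: specialize the cylindrical volume and eigenvalue formulas to $k=1$, invert the resulting $2\times 2$ linear system in $H^2$ and $\cR$ (the paper names the two quantities $A(\vep)$ and $B(\vep)$ and writes $H^2=B-2A$, $\cR=B-3A$), and then extract $\ka_\mu^2$ from the tangent eigenvalue using $A=\sum_\al\ka_\al^2$. Your treatment is in fact more detailed than the paper's brief proof, particularly in the eigenvector and sign discussions and in the explicit error bookkeeping.
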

\begin{proof}
	Solving for the next-to-leading order term in the volume formula \ref{eq:volCyl}, and for the normal eigenvalue in equation \ref{eq:norEVDCyl}, we get a system of two equations $H^2-\cR = A(\vep),\; 3H^2 - 2\cR = B(\vep)$, whose solution is $H^2 = B-2A$ and $\cR=B-3A$, where
\begin{align*}
	& A(\vep) = \frac{2(n+2)}{\vep^2}\left( \frac{V_p(\vep)}{V_n(\vep)} -1 \right) +\cO(\vep^2), \qquad
	 B(\vep) = \frac{4(n+2)(n+4)}{\vep^4}\frac{\lbd_{n+1}(p,\vep)}{V_n(\vep)} + \cO(\vep^2).
\end{align*}
Finally, solving for $\ka^2_\mu$ from the tangent eigenvalue equation \ref{eq:tanEVDCyl}, and using $A(\vep)=\sum_\al \ka^2_\al$, the last formula obtains.
\end{proof}

The spherical descriptors can be used to determine the relative signs of the principal curvatures, and the cylindrical descriptors can be used to estimate with higher precision the absolute value of the principal curvatures.



\section{Conclusions}
\label{sec:conclusions}

We have used the exponential map to propose a generalization of the multi-scale integral invariants determined by performing Principal Component Analysis in small regions of $n$-dimensional submanifolds inside a general $(n+k)$-dimensional Riemannian manifold. The kernel domains studied for Riemannian manifolds embedded in Euclidean space were determined by the manifold intersection with higher-dimensional cylinders and balls in the ambient space. The volume of these regions expands with scale as the volume of the $n$-dimensional ball plus second order corrections proportional to the mean curvature and scalar curvature of the submanifold at the center point. We have also introduced a generalization of the classical third fundamental form to any codimension and showed how it relates to the Weingarten and Ricci operators and the Ricci equation. Then, the covariance analysis of the region point-set was found to have eigenvalues encoding curvature in terms of the third fundamental form; in particular, the first $n$ eigenvalues are related to those of the normal trace of the third fundamental form operator and the corresponding eigenvectors converge to its principal directions, whereas the last $k$ eigenvalues and eigenvectors are related to the tangent trace of this tensor. In the case of the spherical domain the tangent eigenvalues and eigenvectors of the covariance matrix are related to the Weingarten operator at the mean curvature vector. For hypersurfaces, these eigenvalues provide a method to estimate the principal curvatures and principal directions, furnishing descriptors for general submanifolds via the analysis of their independent hypersurface projections. These results show how local integral invariants relate to the same geometric information traditionally characterized by differential-geometric invariants.

%
%
%
\appendix
\section{Integration of Monomials over Spheres}\label{sec:appendix}
Let $\bx =[x^1,\dots,x^n ]^T\in\RR^n$, and denote unit the sphere and ball of radius $\vep$ in $\RR^n$ by:
$$
\SS^{n-1}=\{\bx\in\RR^n : \|\bx\|=1 \},\quad B^{n}(\vep)=\{\bx\in\RR^n : \|\bx\|\leq\vep \}.
$$
General spherical coordinates $(r,\phi_1,\dots,\phi_{n-1})$ are given by $r=\|\bx\|$, where $\cx^\mu :=x^\mu / r\in\SS^{n-1}$.
\begin{definition}
 For any integers $p_1,\dots,p_n\in\{0,1,2,\dots\}$, the integrals of the monomials $(x^1)^{p_1}\cdots (x^n)^{p_n}$ over the unit sphere and the ball of radius $\vep$ are denoted by:
\begin{equation}\label{eq:constants}
	C^{(n)}_{p_1\dots p_n}=\int_{\SS^{n-1}} (x^1)^{p_1}\cdots (x^n)^{p_n}\; d\,\SS^{n-1},\quad\quad
	D^{(n)}_{p_1\dots p_n}=\int_{B^n(\vep)} (x^1)^{p_1}\cdots (x^n)^{p_n}\; d^nB.
\end{equation}
where $d\,\SS^{n-1}$ is the Euclidean measure on the sphere and $d^nB=dx^1\cdots dx^n=r^{n-1}dr\;d\,\SS^{n-1}$.
\end{definition}

The following formula is crucial to the computations of the present paper, cf. \cite{folland2001}.

\begin{theorem}\label{th:folland}
	Let $b_i=\frac{1}{2}(p_i+1)$, then the values of the integrals \ref{eq:constants} over spheres are
\begin{equation}
	C^{(n)}_{p_1\dots p_n}=\begin{cases}
		0, & \text{ if some $p_i$ is odd,} \\ \displaystyle
		2\frac{\Gamma(b_1)\Gamma(b_2)\cdots\Gamma(b_n)}{\Gamma(b_1+b_2+\cdots +b_n)}, & \text{ if all $p_i$ are even},
	\end{cases}
\end{equation}
and the integrals over balls become
\begin{equation}
	D^{(n)}_{p_1\dots p_n}=\frac{\vep^{n+p_1+\cdots+p_n}}{n+p_1+\cdots+p_n}\; C^{(n)}_{p_1\dots p_n}.
\end{equation}
\end{theorem}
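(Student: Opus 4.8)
The plan is to deduce the closed form from the classical Gaussian-integral identity, treating the two assertions separately: first the elementary reduction of the ball integral $D$ to the sphere integral $C$, and then the evaluation of $C$ itself by computing one Gaussian integral over $\RR^n$ in two different ways.

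First I would dispose of the ball-to-sphere relation, which requires no new idea. Writing $d^nB = r^{n-1}\,dr\,d\,\SS^{n-1}$ and $x^\mu = r\,\cx^\mu$ with $\cx\in\SS^{n-1}$, the monomial $\prod_i (x^i)^{p_i}$ is homogeneous of degree $P:=p_1+\cdots+p_n$, so the radial and angular variables separate. The radial factor is $\int_0^\vep r^{n-1+P}\,dr = \vep^{\,n+P}/(n+P)$, and the angular factor is exactly $C^{(n)}_{p_1\dots p_n}$; this gives the stated formula for $D^{(n)}_{p_1\dots p_n}$ at once. Thus everything reduces to evaluating the spherical integral $C^{(n)}_{p_1\dots p_n}$.

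The odd case is immediate: if some $p_i$ is odd, the reflection $x^i\mapsto -x^i$ is an isometry of $\SS^{n-1}$ under which the integrand changes sign, forcing $C^{(n)}_{p_1\dots p_n}=0$. For the even case the main step is to compute $I := \int_{\RR^n}\prod_{i=1}^n (x^i)^{p_i}\,e^{-\|\bx\|^2}\,d^n\bx$ in two ways. By Fubini the Gaussian weight factorizes, giving $I = \prod_{i=1}^n \int_{-\infty}^\infty t^{p_i}e^{-t^2}\,dt = \prod_{i=1}^n \Gamma(b_i)$, using the one-variable identity $\int_{-\infty}^\infty t^{p}e^{-t^2}\,dt = \Gamma\!\left(\tfrac{p+1}{2}\right)$ for even $p$. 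Evaluating the same $I$ in polar coordinates separates the angular factor $C^{(n)}_{p_1\dots p_n}$ from the radial integral $\int_0^\infty r^{n-1+P}e^{-r^2}\,dr = \tfrac12\,\Gamma\!\left(\tfrac{n+P}{2}\right)$. The key bookkeeping observation is the index identity $\sum_i b_i = \sum_i \tfrac{p_i+1}{2} = \tfrac{n+P}{2}$, so the argument of the radial Gamma function is precisely $\sum_i b_i$. Equating the two evaluations yields $\prod_i \Gamma(b_i) = \tfrac12\,\Gamma\!\left(\sum_i b_i\right) C^{(n)}_{p_1\dots p_n}$, which rearranges to the claimed formula $C^{(n)}_{p_1\dots p_n} = 2\,\Gamma(b_1)\cdots\Gamma(b_n)/\Gamma(b_1+\cdots+b_n)$.

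There is no substantial obstacle in this argument; the proof is a standard application of the Gaussian trick. The only points requiring care are the correct normalization constants in the two one-variable integrals and the verification of the identity $\sum_i b_i = (n+P)/2$, which is exactly what makes the radial Gamma-argument agree with the product of vertex arguments. I would therefore expect the write-up to be short, with the bulk of the text devoted to the double evaluation of $I$.
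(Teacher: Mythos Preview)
Your argument is correct and is exactly the standard Gaussian-trick proof due to Folland; the paper does not supply its own proof of this theorem but simply cites \cite{folland2001}, whose proof proceeds precisely as you outline. There is nothing to add.
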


\begin{example}\label{ex:constants}
	We shall need the relations among integrals of monomials of even powers:
\begin{align*}
	& C_{2} = \int_{\SS^{n-1}} (x^1)^{2}\; d\,\SS= 2\frac{\Gamma(\frac{3}{2})\Gamma(\frac{1}{2})^{n-1}}{\Gamma(\frac{3}{2}+\frac{n-1}{2})}=\frac{\pi^{n/2}}{\Gamma(\frac{n}{2}+1)}, \\[1mm]
	& C_{22}= \int_{\SS^{n-1}} (x^1)^{2}(x^2)^2\; d\,\SS = \frac{1}{n+2}\, C_2, \\[1mm]
	& C_{4}= \int_{\SS^{n-1}} (x^1)^{4}\; d\,\SS = \frac{3}{n+2}\, C_2 = 3\, C_{22}, \\[1mm]
	& C_{222}= \int_{\SS^{n-1}} (x^1)^{2}(x^2)^2(x^3)^2\; d\,\SS = \frac{1}{(n+2)(n+4)}\, C_2, \\[1mm]
	& C_{24}= \int_{\SS^{n-1}} (x^1)^{2}(x^2)^4\; d\,\SS = \frac{3}{(n+2)(n+4)}\, C_2 = 3\, C_{222}, \\[1mm]
	& C_{6}= \int_{\SS^{n-1}} (x^1)^{6}\; d\,\SS = \frac{15}{(n+2)(n+4)}\, C_2 = 15\, C_{222}.
\end{align*}
The volume of a ball of radius $\vep$, and the area of the unit sphere satisfy:
\begin{equation*}
	V_n(\vep)=\text{Vol}(B^n(\vep)) = \vep^n\, C_2,\quad\quad S_{n-1}=\text{Area}(\SS^{n-1})= n\, C_2.
\end{equation*}
\end{example}

The integral of a general combination of coordinates depends on the superindices involved, which must not be confused with exponents. For instance
$$
\int_{\SS^{n-1}}\cx^\mu\cx^\nu\cx^\bet\cx^\ga \; d\,\SS = 
C_4\contraction{(}{\mu}{}{\nu}\contraction{(}{\mu}{\nu}{\bet}\contraction{(}{\mu}{\nu\bet}{\ga}(\mu\nu\bet\ga) +
C_{22}\left[
\contraction{(}{\mu}{}{\nu}\bcontraction{(\mu\mu}{\bet}{}{\ga}(\mu\nu\bet\ga) + 
\contraction{(}{\mu}{\nu}{\ga}\bcontraction{(\mu}{\nu}{\bet}{\ga}(\mu\nu\bet\ga) + 
\contraction{(}{\mu}{\nu\bet}{\ga}\bcontraction{(\mu}{\nu}{}{\bet}(\mu\nu\bet\ga)
\right]
$$
is the general value of the integral of any product of 4 coordinates, that can be all equal to produce $C_4$, or be a couple of different pairs to result in $C_{22}$. We introduce the following notation:
$$
\contraction{(}{\mu}{}{\nu}\bcontraction{(\mu\mu}{\bet}{}{\ga}(\mu\nu\bet\ga) = \del_{\mu\nu}\,\del_{\bet\ga}\cancel{\del}_{\mu\bet},
$$
so that the symbol is $1$ only when the connected superindices are equal and the nonconnected superindices are different, and $0$ otherwise, and where $\cancel{\del}_{\mu\bet}:=(1-\del_{\mu\bet})$ is the negation of the Kronecker delta, i.e., nonzero only if $\mu\neq\bet$. An example of order 6 is 
$$
\bcontraction{(}{\mu}{\nu\al\bet}{\ga}\contraction{(\mu}{\nu}{\al\bet\ga}{\del}\contraction[2ex]{(\mu\nu}{\al}{}{\bet}(\mu\nu\al\bet\ga\del) = \del_{\mu\ga}\,\del_{\nu\del}\,\del_{\al\bet}\,\cancel{\del}_{\mu\nu}\,\cancel{\del}_{\mu\al}\,\cancel{\del}_{\nu\al}.
$$

\addtocontents{toc}{\SkipTocEntry}
\section*{Acknowledgments}
We would like to thank Louis Scharf for very helpful discussions. J.\'A.V. would like to thank Miguel Dovale \'Alvarez for many useful discussions during the writing of this paper.

\bibliographystyle{amsplain}
\bibliography{AKP-Integral-Invariants}

\end{document}